 \renewcommand*{\backref}[1]{}
\renewcommand*{\backrefalt}[4]{({%
		\ifcase #1 Not cited.%
		\or On p.~#2%
		\else On pp.~#2%
		\fi%
	})}
\crefname{subsection}{Subsection}{Subsection}
\crefname{equation}{Diagram}{Diagram}
\newcommand{\C}{\mathscr{C}}
\newcommand{\E}{\mathscr{E}}
\renewcommand{\O}{\mathscr{O}}
\newcommand{\U}{\mathscr{U}}
\newcommand{\Partial}{\mathscr{P}\text{artial}}
\newcommand{\Total}{\mathscr{T}\text{otal}}
\newcommand{\DA}{\mathscr{D}\mathscr{A}}
\newcommand{\Map}{\mathrm{Map}}
\newcommand{\Hom}{\mathrm{Hom}}
\newcommand{\colim}{\mathrm{colim}}
\newcommand{\Eq}{\mathrm{Eq}}
\newcommand{\Arr}{\mathscr{A}\mathrm{rr}}
\newcommand{\Coeq}{\mathrm{Coeq}}
\newcommand{\Cone}{\mathrm{Cone}}
\newcommand{\Sub}{\mathrm{Sub}}
\newcommand{\Ind}{\mathrm{Ind}}
\newcommand{\Part}{\mathscr{P}\mathrm{art}}
\newcommand{\uPart}{\underline{\Part}}
\newcommand{\blueH}{\textcolor{blue}{H}}
\newcommand{\bluepH}{\textcolor{blue}{pH}}
\newcommand{\comma}{,}
\newcommand{\leb}{[}
\newcommand{\reb}{]}
\newcommand{\dumbmacro}{ [X \times \mathbb{N} \to \mathbb{N}]^{[\mathbb{N}_1 \to \mathbb{N}]}}
\newcommand{\ds}{\displaystyle}
\newcommand{\comsq}[8]{
  \begin{tikzcd}[row sep=0.3in, column sep=0.3in]
    #1 \arrow[r, "#5"] \arrow[d, "#6"']
    \pgfmatrixnextcell #2 \arrow[d, "#7"] \\
    #3 \arrow[r, "#8"]
    \pgfmatrixnextcell #4
  \end{tikzcd}
}
\newcommand{\pbsq}[8]{
  \begin{tikzcd}[row sep=0.3in, column sep=0.3in]
    #1 \arrow[r, "#5"] \arrow[d, "#6"'] \arrow[dr, phantom, "\ulcorner", very near start]
    \pgfmatrixnextcell #2 \arrow[d, "#7"] \\
    #3 \arrow[r, "#8"']
    \pgfmatrixnextcell #4
  \end{tikzcd}
}
\newcommand{\adjun}[4]{\begin{tikzcd}[row sep/normal=0.5in,column sep/normal=0.5in]
 #1  \arrow[r, shift left=1.4, "#3"] \pgfmatrixnextcell
 #2 \arrow[l, shift left=1, "#4"] 
\end{tikzcd}
}
\newtheorem{theone}[equation]{Theorem}
\newtheorem{lemone}[equation]{Lemma}
\newtheorem{propone}[equation]{Proposition}
\newtheorem{corone}[equation]{Corollary}
\theoremstyle{definition}
\newtheorem{defone}[equation]{Definition}
\newtheorem{exone}[equation]{Example}
\theoremstyle{remark}
\newtheorem{remone}[equation]{Remark}
\numberwithin{equation}{subsection}
\title{Every Elementary Higher Topos has a Natural Number Object}
\author{Nima Rasekh}
\date{March 2021}
\keywords{elementary topos theory, higher category theory, natural number objects}
\subjclass[2020]{03G30, 18B25, 18N60, 55U35}
\address{{\'E}cole Polytechnique F{\'e}d{\'e}rale de Lausanne, SV BMI UPHESS, Station 8, CH-1015 Lausanne, Switzerland}
\email{nima.rasekh@epfl.ch}
\begin{document}

\begin{abstract}

We prove that every elementary $(\infty,1)$-topos has a natural number object. 
We achieve this by defining the loop space of the circle and showing that we can construct a natural number object out of it.
Part of the proof involves showing that various definitions of natural number objects (Lawvere, Freyd and Peano) agree with each other in 
an elementary $(\infty,1)$-topos.
As part of this effort we also study the internal object of contractibility in $(\infty,1)$-categories, which is of independent interest. 
Finally, we discuss various applications of natural number objects. In particular, we use it to define internal sequential 
colimits in an elementary $(\infty,1)$-topos.

\end{abstract}

\maketitle
\addtocontents{toc}{\protect\setcounter{tocdepth}{1}}

 \section{Introduction}\label{Introduction}
  
 \subsection{History $\&$ Motivation}\label{Subsec Motivation}
 One of the first results students learn in a standard algebraic topology course is that $\pi_1(S^1) = \mathbb{Z}$ \cite{hatcher2002algebraictopology}.
 As a matter of fact we can compute the {\it loop space} and show $\ds \Omega S^1 = \mathbb{Z}$. 
 From a categorical perspective this result should be surprising. 
 Recall that a loop space is the following pullback
 \begin{center}
  \pbsq{\Omega S^1}{*}{*}{S^1}{}{}{}{}.
 \end{center}
 Thus the result is saying that the finite limit of a finite CW-complex ($S^1$ has two cells) is an infinite CW-complex.
 Such a thing would never happen in sets. The finite limit of finite sets is always finite.
 The implication is that the higher homotopical structure in finite spaces is implicitly infinite
 and the loop space construction makes that explicit. The seemingly innocuous result has wide ranging logical consequences. 
 \par 
 In the past century various foundations of mathematics have been established: {\it set theory} \cite{fraenkelbarhillel1958settheory}, {\it type theory} \cite{church1940typetheory}, and {\it elementary topos theory} \cite{tierney1973elementarytopos}.
 In particular, using elementary toposes we can develop many results which have been developed classically using sets. 
 One key aspect is the {\it natural number object} \cite{lawvere1963algebraictheory}, which corresponds to the {\it axiom of infinity} in set theory.
 Using natural number objects we can construct {\it free finitary algebras} (such as free monoids) in an elementary topos and discuss {\it geometric theories} \cite{johnstone2002elephanti,johnstone2002elephantsii}.
 We even can construct elementary toposes that have {\it non-standard natural number objects} (for example via the {\it filter construction} \cite{adelmanjohnstone1982serreclasses}). 
 \par 
 There are now several methods to develop foundations in a homotopical setting. One approach, known as {\it homotopy type theory} or {\it univalent foundations} \cite{hottbook2013}, has already been used to prove many classical results from homotopy theory. 
 On the other side, an alternative approach via higher categories, know as {\it elementary $(\infty,1)$-topos theory} \cite{rasekh2018elementarytopos},
 is still in its early stages. 
 \par
 The goal of this paper is to prove that, unlike in the $1$-categorical setting, in the higher categorical setting the existence of a natural number object follows from the axioms of an elementary $(\infty,1)$-topos (and in fact even weaker conditions given in \cref{Subsec Notation}). The proof consists of three steps, each taking motivation from a different branch of mathematics. The first step of the proof generalizes the construction of the loop space of the circle from the category of spaces to $(\infty,1)$-categories (\cref{Sec The Loop Space of the Circle}). In the next step, we use our knowledge of elementary toposes to realize that we can construct a natural number object in the {\it underlying elementary topos} of our $(\infty,1)$-category (\cref{Subsec Constructing a Natural Number Object in the Truncation}). 
 \par 
 Finally, we want to show that this implies the $(\infty,1)$-category itself has a natural number object, however, here we encounter a serious complication. Concretely, there are several ways of characterizing natural number objects in an elementary topos: {\it Lawvere}, {\it Freyd} and {\it Peano}. The definition of a Peano and Freyd natural number object generalize with minimal effort (\cref{Subsec Peano and Lawvere Higher}), whereas Lawvere natural number objects do not, as we shall explain.
 \par 
 In the setting of elementary toposes the proof that a Peano natural number object is a Lawvere natural number object relies on the fact that elementary toposes are models of type theories \cite[Proposition D4.3.15]{johnstone2002elephantsii}. Indeed, the key step in \cite[Theorem D5.1.2]{johnstone2002elephantsii} is the construction of a subobject of the natural number object via a term in type theory. This construction cannot simply be repeated with $(\infty,1)$-categories because the connection with homotopy type theory is not yet properly understood. We might hope to translate the term manually in an $(\infty,1)$-category. However, as it involves identity types, its translation would not even be a subobject, as identity types in $(\infty,1)$-categories translate to path objects. Hence, there is simply no way to recover the proof in \cite[Theorem D5.1.2]{johnstone2002elephantsii} either directly or via translation.
 \par 
 Our next hope might be to instead use the fact that natural number objects have been studied in homotopy type theory \cite{ags2017initialalg} and manually translate a proof that Peano natural number objects give us Lawvere natural number objects \cite{shulman2020gitnno} into the  $(\infty,1)$-categorical setting. 
 The idea of the proof is to show that the type of morphisms out of the natural number object is a retract of the type of ``partially defined maps".
 However, the construction of this retract (and in particular \cite[Lines 983-1009]{shulman2020gitnno}) uses certain features of the internal language of homotopy type theory, that can only be translated if we have strict univalent universes (which only exist in some $(\infty,1)$-categories \cite{shulman2019inftytoposunivalent}).
 \par
 Given that none of the direct solutions have worked, we will improvise, meaning we will use the general idea of the proof in \cite{shulman2020gitnno} with some major changes. First, our construction of the {\it space of partial maps} (given in \cref{Subsec Space of Partial Maps}) will be explicitly parameterized over the natural number object. This way we can construct the desired retract also in an $(\infty,1)$-categorical setting, which we do in \cref{Subsec Total vs. Partial Maps}. Moreover, in order to prove the contractibility of the space of partial maps (\cref{The Part Contractible}) we will make explicit use of the $(\infty,1)$-categorical analogue of the {\it object of contractibility}, which we will introduce in \cref{Subsec The Internal Object of Contractibility}. Finally, given that our construction of the partial maps is parameterized over the natural numbers, we need to take an additional step and prove that the various fibers are compatible, which we do in \cref{Prop Partial Maps Parametrized}, again by using the object of contractibility. It should be noted that, if the connection between homotopy type theory and elementary $(\infty,1)$-topos theory is clarified, we could formalize this proof into an alternative proof to the one in \cite{shulman2020gitnno}.
 \par 
 We can use the existence of natural number objects in elementary $(\infty,1)$-toposes to study infinite colimits (\cref{Subsec Infinite Colimits in an Elementary Higher Topos}, \cref{Subsec Internal Infinite Coproducts and Sequential Colimits}). We can also use it deduce that not every elementary topos can be lifted to an elementary $(\infty,1)$-topos, leading to the natural question whether every elementary topos with natural number object can be in fact lifted (\cref{Subsec Not every Elementary Topos lifts to an Elementary Higher Topos}).
 The existence of natural number objects has been used in subsequent work to study truncations in an elementary $(\infty,1)$-topos \cite{rasekh2018truncations}. Moreover, using a filter construction for $(\infty,1)$-categories, we can now construct elementary $(\infty,1)$-toposes with non-standard natural number objects \cite{rasekh2020filterquotient}, which is a noteworthy as it cannot happen in a Grothendieck $(\infty,1)$-topos (\cref{Subsec Infinite Colimits in an Elementary Higher Topos}).

 \subsection{Background} \label{Subsec Background}
 Throughout this paper we use many ideas motivated by elementary $(\infty,1)$-toposes as defined in \cite{rasekh2018elementarytopos} and, in particular, the concept of {\it descent}. Moreover, we use the basics of $(\infty,1)$-category theory \cite{rezk2001css,lurie2009htt,riehlverity2017inftycosmos}. 
In \cref{Sec The Loop Space of the Circle}, we will use some basic observations from classical algebraic topology. We make extensive use of elementary topos theory and in particular results related to natural number objects and finite cardinals in an elementary topos \cite{johnstone2002elephanti,johnstone2002elephantsii}. Finally, we use some ideas from homotopy type theory and in particular the object of contractibility as studied in \cite{shulman2015elegantunivalence}.
 
 \subsection{\texorpdfstring{$(\infty,1)$}{(oo,1)}-Categorical Notation $\&$ Convention} \label{Subsec Notation}
 Throughout this whole paper $\E$ is a {\it finitely bicomplete locally Cartesian closed} $(\infty,1)$-category that has a {\it subobject classifier} and satisfies {\it descent} \cite[6.1.3]{lurie2009htt}.
 Examples of such $(\infty,1)$-categories include Grothendieck $(\infty,1)$-toposes \cite{lurie2009htt,rezk2010toposes}, but also filter product elementary $(\infty,1)$-toposes \cite{rasekh2020filterquotient}.
 Notice if $\E$ satisfies these conditions, then $\tau_0\E$ is also locally Cartesian closed with subobjects classifier, meaning it is an {\it elementary topos} \cite{maclanemoerdijk1994topos}. We will call $\tau_0\E$ the {\it underlying elementary topos} of $\E$.
 We will need some additional assumptions for $\E$ in \cref{Sec Applications}. 
 \par 
 For two objects $X,Y$ we denote the {\it mapping space} by $\Map(X,Y)$, whereas the space of equivalences is denoted $\Eq(X,Y)$. 
 Moreover, the {\it $(\infty,1)$-category of arrows} is denoted by $\Arr(\E)$, whereas the subcategory with same objects, but morphisms pullback squares is denoted by $\O_\E$.
 \par 
 For a given $(\infty,1)$-category $\E$, we denote underlying groupoid (the right adjoint to the inclusion of spaces) by $\E^{core}$, and the groupoidification (the left adjoint to the inclusion of spaces) by $\E^{grpd}$.
 Finally, we will use the fact that the target functor $t:O_\E \to \E$ is the {\it right fibration} that classifies the space valued presheaf that takes an object $B$ to $(\E_{/B})^{core}$ \cite[6.1.1]{lurie2009htt}.
 \par
  An object $X$ in $\E$ is $n$-truncated if for all objects $Y$, $\Map(Y,X)$ is an $n$-truncated space. In particular, $X$ is $(-1)$-truncated if and only if it is mono, if and only if the diagonal map $\Delta: X \to X \times X$ is an equivalence. We denote the full subcategory of $n$-truncated objects by $\tau_n\E$ and note the inclusion $\tau_n\E \hookrightarrow \E$ is limit preserving.
 \par
 We denote the final object in $\E$ with $1_\E$ and similarly the initial object with $\emptyset_\E$. Moreover, we will also denote the circle in $\E$ by $S^1_\E$ (\cref{Def Circle}), 
 whereas the circle in spaces is simply denoted by $S^1$.
 This way we avoid any confusion between objects in $\E$ and spaces.
 
 \subsection{Acknowledgments}
 I want to thank Egbert Rijke and Mike Shulman for making me aware that this result holds in homotopy type theory.
 I want to specially thank Mike Shulman for finding an error in a previous version of this paper and suggesting 
 an alternative approach that has led to the correction.
 I also want to thank the referee for many helpful suggestions that have significantly improved the exposition and organization of the paper.
 I would also like to thank the Max-Planck-Institut f{\"u}r Mathematik for its hospitality and financial support.
 
\section{The Loop Space of the Circle} \label{Sec The Loop Space of the Circle}
 The goal of this section is to gain a thorough understanding of the loop object of the circle.
 In the next section we will use this knowledge to construct natural number objects.
 
 \subsection{Descent and the Circle}
 
 \begin{defone}
   Let $\DA$ be the category with two objects and two maps which both start and end with the same objects.
   Informally we can depict it as 
    \begin{tikzcd}[row sep=0.5in, column sep=0.5in]
     \cdot \arrow[r, shift left=0.05in] \arrow[r, shift right=0.05in] & \cdot 
    \end{tikzcd}.
 \end{defone}

 \begin{defone} \label{Def Circle}
  Let $S^1_\E$ be the colimit of the final map $\DA \to \E$, meaning it is following coequalizer
  \begin{center}
   \begin{tikzcd}[row sep=0.5in, column sep=0.5in]
    1_\E \arrow[r, shift left=0.05in, "id_{1_\E}"] \arrow[r, shift right=0.05in, "id_{1_\E}"'] & 1_\E \arrow[r, "i_{\E}"] & S^1_{\E}
   \end{tikzcd}.
  \end{center}
  Moreover, define $\Omega S^1_\E$ as the pullback $\Omega S^1 = 1_\E \times_{S^1_\E} 1_\E$.
 \end{defone}
 
 We will prove the following facts about $\Omega S^1_\E$. It is $0$-truncated (\cref{The Loops Zero Truncated}), it is an initial algebra (\cref{Prop Initial Loop Algebra}), it is a group object (\cref{The Loop is a Group}) and we have an isomorphism 
  $\Omega S^1_\E \cong \coprod_n \Omega S^1_\E$ (\cref{Prop Omega Infinite}).
 
  In order to prove these we need to better understand the object $S^1_\E$, which requires us to study the concept of {\it descent}.
 It has been mostly studied in the context of a higher topos \cite[Section 6.1.3]{lurie2009htt}.
 We will only review the aspects of descent we need in the coming proofs.
 \par 
 Let $I$ be a finite $(\infty,1)$-category. 
 Then the diagonal map $\Delta_I: \E \to \E^I$ has a left adjoint that sends each diagram to the colimit 
 $\colim_I: \E^I \to \E$.
 Let $F: I \to \E$ be a fixed diagram in $\E$. Then we get a composition map 
 $\E^I_{/F} \to \E^I \to \E$.
  We can restrict $\E^I_{/F}$ to certain natural transformations. 
  A natural transformation $G: \Delta[1] \times I \to \E$ is {\it Cartesian} if for each map $\Delta[1] \to I$
  the restriction map $\Delta[1] \times \Delta[1] \to \E$ is a pullback square. 
  Let $F: I \to \E$ be a diagram in $\E$. We define $(\E^I_{/F})^{Cart}$ as the subcategory of $\E^I_{/F}$ with the same objects, but morphisms 
  Cartesian natural transformations. 

 \begin{exone} \label{Ex Cart over Final}
  Let $F: I \to \E$ be the final diagram. This means that $F$ factors through the map $1_\E: * \to \E$ that maps the point to the 
  final object in $\E$. In this case a Cartesian diagram $G: I \to \E$ over $F$ is a diagram that lifts to a diagram 
  $G: I^{grpd} \to \E$ (\cref{Subsec Notation}). 
 \end{exone}

 The projection map $(\E^I_{/F})^{Cart} \to \E$ is a right fibration, meaning that it models a contravariant functor in spaces 
 (for a detailed discussion of right fibrations see \cite{lurie2009htt,rasekh2017left,riehlverity2018elements}).
 We are finally in a position to state the descent condition we need.
 
 \begin{theone}
  The right fibration $(\E^I_{/F})^{Cart} \to \E$ is representable. Concretely the map $id: F \to F$ is a final object in 
  $(\E^I_{/F})^{Cart}$. The object $F$ maps to $\colim_I F$ in $\E$. This implies that we have an equivalence of right fibrations
  \begin{center}
   \begin{tikzcd}[column sep=0.25in, row sep=0.25in]
    \E_{/\colim_I F} \arrow[dr, "\pi"'] \arrow[rr, "\simeq"] & & (\E^I_{/F})^{Cart} \arrow[dl, "\colim_I"] \\
    & \E &
   \end{tikzcd}
  \end{center}
 \end{theone}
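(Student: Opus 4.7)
The plan is to identify this statement as a direct consequence---indeed, essentially an equivalent reformulation---of the descent assumption on $\E$, and then to unwind what ``representable right fibration'' means in this context.

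First, I would construct an explicit candidate equivalence. In one direction, define $\Phi: \E_{/colim_I F} \to (\E^I_{/F})^{Cart}$ by pullback: a map $f: X \to colim_I F$ is sent to the natural transformation $G \to F$ with $G(i) := X \times_{colim_I F} F(i)$, equipped with the induced projections. The pasting lemma for pullbacks shows this transformation is Cartesian. In the other direction, define $\Psi: (\E^I_{/F})^{Cart} \to \E_{/colim_I F}$ by taking colimits: a Cartesian $\alpha: G \to F$ is sent to the induced map $colim_I G \to colim_I F$. Both functors live over $\E$, once one verifies via descent that $colim_I (X \times_{colim_I F} F(-)) \simeq X$.

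Second, I would observe that descent, as assumed, asserts precisely that $\Phi$ and $\Psi$ are mutually inverse. The composite $\Phi \circ \Psi$ asks that for a Cartesian transformation $\alpha: G \to F$, the canonical map $G(i) \to (colim_I G) \times_{colim_I F} F(i)$ is an equivalence---this is the van Kampen condition for the colimit. The composite $\Psi \circ \Phi$ says that colimits of pullback families recover the original base object, which is the dual form. The identification of $id_F: F \to F$ as the terminal object of $(\E^I_{/F})^{Cart}$ is then automatic: it is the image under $\Phi$ of the terminal object $id: colim_I F \to colim_I F$ in $\E_{/colim_I F}$, and its image under the projection is $colim_I F$.

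The main obstacle is really bookkeeping: verifying that $\Phi$ and $\Psi$ preserve the Cartesian condition, respect the projections down to $\E$, and assemble into maps of \emph{right fibrations} rather than merely of underlying $(\infty,1)$-categories. No fundamentally new content is required, since descent is a standing hypothesis, but some care is needed to formulate everything correctly in the complete Segal space model, in particular to produce the equivalence $\E_{/colim_I F} \simeq (\E^I_{/F})^{Cart}$ over $\E$ rather than just abstractly.
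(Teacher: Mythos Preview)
Your proposal is correct and matches the paper's treatment: the paper does not give a proof of this theorem at all, treating it instead as the \emph{statement} of the descent condition (one of the standing hypotheses on $\E$ from Subsection~\ref{Subsec Notation}), and follows it only with an informal remark describing exactly the pullback/colimit inverse maps you call $\Phi$ and $\Psi$. Your write-up is actually more detailed than what the paper provides, but the content is the same.
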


 \begin{remone}
  Intuitively we have following equivalence. Each map $Y \to \colim_I F$ can be pulled back to a Cartesian natural transformation diagram $G_Y \to F$.
  On the other side a Cartesian natural transformation over $F$, $G \to F$, gives us a map of colimits $\colim_I G \to \colim_I F$.
  Descent tells us that these two actions are inverses of each other. 
 \end{remone}

 We will use the descent property to analyze $\E_{/S^1_\E}$. Recall that $S^1_\E$ is a colimit of
 the final diagram of shape $\DA$. Thus using descent we get an equivalence 
 $\E_{/S^1_\E} \simeq ((\E^{\DA})_{/1_{\E}})^{Cart}$
 By \cref{Ex Cart over Final} a Cartesian natural transformation is a diagram out of the groupoidification of $\DA$, which 
 is the space $S^1$. Thus we get an equivalence 
 \begin{equation} \label{eq:descent}
 	\E_{/S^1_\E} \xrightarrow{ \ \ \simeq \ \ } \E^{S^1}.
 \end{equation}

  It is helpful to have some intuition on this equivalence. A map $X \to S^1_\E$ induces a diagram 
  \begin{center}
   \begin{tikzcd}[row sep=0.3in, column sep=0.5in]
    F \arrow[r, shift left=0.05in, "\simeq" near start, "e_1" near end] 
      \arrow[r, shift right=0.05in, "\simeq"' near start, "e_2"' near end] \arrow[d] 
    \arrow[dr, phantom, "\ulcorner", very near start] & 
    F \arrow[d] \arrow[r] \arrow[dr, phantom, "\ulcorner", very near start] & 
    X \arrow[d]\\
    1_\E \arrow[r, shift left=0.05in, "id_{1_\E}"] \arrow[r, shift right=0.05in, "id_{1_\E}"'] & 1_\E \arrow[r] & S^1_\E
   \end{tikzcd}
  \end{center}
  Thus we get an object $F$ along with two equivalences $e_1,e_2: F \to F$. Choosing a composition for $e_1, (e_2)^{-1}$ we get a self equivalence $e_1(e_2)^{-1}:F \to F$, which corresponds to a functor $S^1 \to \E$, as $S^1$ is the higher groupoid with one object and one self-equivalence.
  \par 
  On the other side every object $F$ with self equivalence $e$ gives us a diagram
  \begin{center}
   \begin{tikzcd}[row sep=0.3in, column sep=0.5in]
    F \arrow[r, shift left=0.05in, "\simeq" near start, "e" near end] 
      \arrow[r, shift right=0.05in, "\simeq"' near start, "id_F"' near end] \arrow[d] 
    \arrow[dr, phantom, "\ulcorner", very near start] & 
    F \arrow[d] \arrow[r] \arrow[dr, phantom, "\ulcorner", very near start] & 
    \Coeq(e,id_F) \arrow[d]\\
    1_\E \arrow[r, shift left=0.05in, "id_{1_\E}"] \arrow[r, shift right=0.05in, "id_{1_\E}"'] & 1_\E \arrow[r] & S^1_\E
   \end{tikzcd}
  \end{center}
  So we get a map $\Coeq(e,id_F) \to S^1_\E$. As a result of descent we know that these two maps are equivalences 
  and in fact inverses of each other.

  Notice one important example of the descent condition. By the equivalence of categories (\ref{eq:descent}), the map $i_\E :1_\E \to S^1_\E$ 
  corresponds to an object in $\E$ along with a self-equivalence of that object.
  We already realized that the object is $\Omega S^1_\E$. We will name the corresponding self-equivalence $s: \Omega S^1_\E \to \Omega S^1_\E$.

 \subsection{\texorpdfstring{$\Omega S^1_\E$}{S} is $0$-truncated:} 
 In this part we want to prove that $\Omega S^1_\E$ is $0$-truncated (\cref{Subsec Notation}).
 In order to prove the desired result, we need following technical lemma.
 
 \begin{lemone} \label{Lemma SOne truncated}
 	Assume we have following diagram in $\E$
 	\begin{center}
 		\begin{tikzcd}[row sep=0.35in, column sep=0.6in]
 			\hat{X} \arrow[r, shift left=0.05in, "\simeq" near start, "f" near end] 
 			\arrow[r, shift right=0.05in, "\simeq"' near start, "id_{\hat{X}}"' near end] \arrow[d, "g_1", bend left = 20] \arrow[d, "g_2"', bend right=20] & 
 			\hat{X} \arrow[d, "g_1", bend left = 20] \arrow[d, "g_2"', bend right=20] \arrow[r, "p"]  & 
 			\Coeq(f,id_{\hat{X}}) \arrow[d, "id"] \\
 			\hat{X} \arrow[r, shift left=0.05in, "\simeq" near start, "f" near end] 
 			\arrow[r, shift right=0.05in, "\simeq"' near start, "id_{\hat{X}}"' near end] & \hat{X} \arrow[r, "p"] & \Coeq(f,id_{\hat{X}})
 		\end{tikzcd}
 	\end{center}
    where the horizontal diagrams are coequalizers and the horizontal maps are equivalences. 
 	Then the space of equivalences $\Eq(g_1,g_2)$ is empty or contractible. 
 \end{lemone}
 
 \begin{proof}
 	 In order to prove this we need some notation.
 	First of all, we can think of the two right hand squares 
 	\begin{equation}\label{eq:squares}
 		\comsq{\hat{X}}{\Coeq(f,id_{\hat{X}})}{\hat{X}}{\Coeq(f,id_{\hat{X}})}{p}{g_1}{id}{p}, 
 		\comsq{\hat{X}}{\Coeq(f,id_{\hat{X}})}{\hat{X}}{\Coeq(f,id_{\hat{X}})}{p}{g_2}{id}{p}
 	\end{equation}
 	as cones over the diagram $X \rightarrow \Coeq(f,id) \leftarrow \Coeq(f,id)$. 
   Let $P$ be the category given by the poset 
   \begin{center}
   	\begin{tikzcd}[row sep=0.3in, column sep=0.3in]
   		0 \arrow[rr] \arrow[dr] & & 1 \arrow[dl] \arrow[r] &  3 \arrow[d] \\ 
   		 &2 \arrow[rr] & & 4 
   	\end{tikzcd}
   \end{center}
   and denote the unique morphism from $i \to j$ (if it exists) by $ij$. Observe that $P$ is a pushout of the commutative triangle $012$ and the square $1234$ glued along the morphism $12$. 
   
   Let $\Cone(g_1,g_2)$ be the space of diagrams $P \to \E$ that take the square formed by $0234$ to the left hand square in \cref{eq:squares} and 
   the square formed by $1234$ to the right hand square in \cref{eq:squares}. We can restrict such a diagram to the triangle formed by $012$, which has two properties. First, the restriction is an equivalence as the image of the square $1234$ is predetermined in $\Cone(g_1,g_2)$. Second, the triangle obtained by restricting along $012$ is such that the image of $02$ is $g_1$ and the image of $12$ is $g_2$. 
   Thus we have proven that there is a trivial fibration
   $$U: \Cone(g_1,g_2) \to \Map_{/X}(g_1: X \to X, g_2:X \to X).$$
   Now, by descent, the square formed by $g_2$ and $p$ is a pullback square which means the cone formed by $g_2$ 
   is a final object in the category of cones and thus 
   $\Cone(g_1,g_2)$ is contractible.
   \par 
   Finally, notice that $\Eq(g_1,g_2)$ corresponds to the subspace of $\Map_{/X}(g_1,g_2)$ consisting of maps $f: X \to X$ over $X$, such that $f$ is equivalent to the identity. 
	The fact that $U$ is a trivial fibration implies that this inclusion lifts to an inclusion
   \begin{center}
   	\begin{tikzcd}[row sep=0.3in, column sep=0.3in]
   		& Cone(g_1,g_2) \simeq * \arrow[d, "U", twoheadrightarrow]\\
   		Eq(g_1,g_2) \arrow[r, hookrightarrow] \arrow[ur, dashed, "I", hookrightarrow] & \Map_{/\hat{X}}(g_1,g_2) 
   	\end{tikzcd}
   \end{center}
   which implies that $\Eq(g_1,g_2)$ is either empty or contractible.
\end{proof}

 	In order to get more intuition for the proof, we can visualize the map $I$ as follows:
 		\begin{center}
 		\begin{tikzcd}[row sep=0.1in, column sep=0.2in]
 			X \arrow[rrrr, ""{name=A, below}, "id"] \arrow[ddrr, "g_1"'] & & & & X \arrow[ddll, "g_2"] & & 
 			X \arrow[ddr, "g_1"', bend right=20] \arrow[drr, "id", ""{name=B, below}] \arrow[rrrrrr, "p", ""{name=C, below}, bend left=20] 
 			\arrow[ddrrrrrr, phantom, "\ulcorner", very near start]
 			
 			& & & & & & \Coeq(f,id) \arrow[dd, "id"] \\ 
 			& & & & \strut \arrow[rr, mapsto, "I"] &  & \strut & & X \arrow[urrrr, "p"', bend right=20] \arrow[dl, "g_2", bend left=20] 
 			\arrow[drrrr, phantom, "\ulcorner", very near start] & & & & \\
 			& & X & & & & & X \arrow[rrrrr, "p"'] & & & & & \Coeq(f,id)
 			\arrow[Rightarrow, from=A, to=3-3, "\blueH", phantom]
 			\arrow[Rightarrow, from=B, to=3-8, "\blueH", phantom]
 			\arrow[Rightarrow, from=C, to=2-9, "\bluepH", phantom]
 		\end{tikzcd}.
 	\end{center}

 \begin{theone} \label{The Loops Zero Truncated}
  $\Omega S^1_\E$ is $0$-truncated.
 \end{theone}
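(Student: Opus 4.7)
The plan is to exploit the descent equivalence $\E_{/S^1_\E} \simeq \E^{S^1}$ together with the pullback presentation $\Omega S^1_\E = 1_\E \times_{S^1_\E} 1_\E$. Under descent, $\Omega S^1_\E$ being $0$-truncated is equivalent to $i_\E \colon 1_\E \to S^1_\E$ being a $0$-truncated map; in turn, since $\Omega S^1_\E$ comes equipped with the canonical basepoint $o \colon 1_\E \to \Omega S^1_\E$ induced by the identity $2$-cell on $i_\E$, together with the self-equivalence $s$ from Example \ref{Ex Omega and S}, it would suffice to show that the iterated loop space $1_\E \times_{\Omega S^1_\E} 1_\E$ based at $o$ is equivalent to $1_\E$.

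To carry this out, I would apply the pasting lemma for pullbacks to unfold $1_\E \times_{\Omega S^1_\E} 1_\E$ as a larger limit over the coequalizer diagram defining $S^1_\E$, and then translate this limit into $\E^{S^1}$ via descent. Under the translation, the limit becomes an equalizer-type construction involving $s$ and $\mathrm{id}_{\Omega S^1_\E}$, where the canonical basepoint $o$ corresponds to trivial coherence data (arising precisely from $\mathrm{id}_{i_\E}$). The universal property of $S^1_\E$ as the colimit of the final diagram of shape $\DA$ should then force the relevant coherences to collapse, yielding $1_\E$.

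To promote the result from $o$ to an arbitrary basepoint, I would use that $s$ is a self-equivalence: for any $x \colon 1_\E \to \Omega S^1_\E$, translation by an appropriate iterate of $s$ gives an equivalence between the loop space of $\Omega S^1_\E$ at $x$ and the loop space at $o$. In particular, for each $n$, the loop space at $s^n(o)$ is contractible as soon as the loop space at $o$ is. This reduces the global $0$-truncation statement to the single computation outlined above, modulo showing that every potential basepoint is accounted for by the action of $s$.

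The main obstacle is the descent computation in the second paragraph. Interpreting an iterated pullback in $\E_{/S^1_\E}$ as a limit in $\E^{S^1}$ involves subtle coherence data, particularly when the pullback maps differ from the identity (as with the basepoint $o$, which is not an identity map of objects but an identity on higher cells). The acknowledgment of an earlier error found by Shulman suggests that precisely this interplay between basepoint coherence and self-equivalence is where naive arguments break down, and careful tracking of the descent data will be needed to avoid conflating the identity self-equivalence with the nontrivial loop generated by $s$.
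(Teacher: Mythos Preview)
Your reduction to a single basepoint is the genuine gap. Saying that ``translation by an appropriate iterate of $s$'' carries an arbitrary point $x\colon 1_\E \to \Omega S^1_\E$ to $o$ presumes that every global point is of the form $s^n(o)$, which is essentially the Peano/initial-algebra content proved only in Sections~\ref{Sec Constructing Peano Natural Number Objects}--\ref{Sec Peano NNO Eq Lawvere NNO Eq Freyd NNO}; invoking it here is circular. Worse, $0$-truncation is not a property detectable on global points alone: you must show that for \emph{every} object $X$ and every $x\colon X \to \Omega S^1_\E$ the based loop object is trivial, equivalently that the diagonal $\Omega S^1_\E \to \Omega S^1_\E \times \Omega S^1_\E$ is $(-1)$-truncated. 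The self-equivalence $s$ does not act transitively on $X$-points in any sense available to you at this stage. (There \emph{is} a correct way to reduce to a single basepoint---use that $\Omega S^1_\E$ is a loop object, hence carries a group-like $A_\infty$-structure making all its path spaces equivalent via left translation---but this is not the mechanism you describe, and even granting it, your plan for the actual computation of $1_\E \times_{\Omega S^1_\E} 1_\E$ remains a hope rather than an argument.)

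For contrast, the paper sidesteps the basepoint problem entirely by proving the stronger statement that $S^1_\E$ is $1$-truncated. It does this externally: $Map_\E(X,\Omega S^1_\E)\simeq \Omega\,Map_\E(X,S^1_\E)$, so it suffices that the right fibration $\E_{/S^1_\E}\to\E$ have $1$-truncated fibers. Descent identifies this fibration with $\E^{S^1}\to\E$, and the fiber over $X$ is the space of pairs $(\hat X,f)$ with $f$ a self-equivalence and $\mathrm{Coeq}(f,\mathrm{id})\simeq X$. The $1$-truncation then reduces to showing that for any two automorphisms $g_1,g_2$ of such a pair, the space $Eq(g_1,g_2)$ is empty or contractible; the paper exhibits an inclusion of $Eq(g_1,g_2)$ into a space of cones over a diagram whose apex is a pullback square (again by descent), hence contractible. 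This argument is uniform in $X$ and never singles out a basepoint.
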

 
 \begin{proof}
 	As $\Omega S^1$ is defined via pullback (\cref{Def Circle}), for any object $Y$, we have an equivalence of mapping spaces 
 	\begin{equation}\label{eq:loops}
 		\Map(Y,\Omega S^1_\E) \simeq \Omega \Map(Y,S^1_\E).
 	\end{equation}
 	Hence, it suffices to prove that $\Map(-,S^1_\E)$, or equivalently the right fibration $\pi: \E_{/S^1_\E} \to \E$, is $1$-truncated.
 	However, by descent, we know the right fibration $\pi: \E_{/S^1_\E} \to \E$ is equivalent to $colim: \E^{S^1} \to \E$. Hence, it suffices to prove that its fiber is $1$-truncated. However, this is precisely the statement of \cref{Lemma SOne truncated}.
 \end{proof}

  Notice that the result still holds if we take a bouquet of circles. Concretely let $\coprod_{S} 1_\E$, where $S$ is a finite set.
  Then $\bigvee_S S^1_\E$, defined as the pushout of the map $\coprod_{S} 1_\E \to 1_\E$ along itself, is also $1$-truncated.

 \subsection{\texorpdfstring{$\Omega S^1_\E$}{S} is the free Algebra generated by the Final Object}
 We say an object $X$ in $\E$ is an $\mathbb{A}$-{\it algebra} if it comes with an auto-equivalence $X \to X$ and a map from the initial object $A \to X$.
 Moreover, a map of $\mathbb{A}$-algebras simply commutes with these two maps.
 We want to prove that $A \xrightarrow{id \times o} A \times \Omega S^1_\E \xrightarrow{id \times s} A \times \Omega S^1_\E$ is the initial $\mathbb{A}$-algebra in $\E_{A/}$.
 \begin{lemone}
  The forgetful map $U: \E^{S^1} \to \E$ has a left adjoint.
 \end{lemone}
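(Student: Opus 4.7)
The plan is to invoke descent to reduce the claim to a statement about slice categories, where the existence of the left adjoint will be essentially formal.

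First I would use the descent equivalence $\E^{S^1} \simeq \E_{/S^1_\E}$ established above to identify the forgetful functor $U$ with the pullback functor $i_\E^*: \E_{/S^1_\E} \to \E_{/1_\E} = \E$ along the basepoint $i_\E: 1_\E \to S^1_\E$. Concretely, given $(F, e: F \xrightarrow{\simeq} F) \in \E^{S^1}$, the descent equivalence produces a map $Coeq(e, id_F) \to S^1_\E$ whose fiber over $i_\E$ is precisely $F$; so $U(F,e) = F$ matches $i_\E^*$ under the equivalence.

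Next I would invoke the standard fact that for any morphism $f: A \to B$ in an $(\infty,1)$-category with finite limits, the pullback functor $f^*: \E_{/B} \to \E_{/A}$ admits a left adjoint $f_!$ given by post-composition with $f$; the adjunction is a direct consequence of the universal property of pullback and requires no assumption on $\E$ beyond the existence of finite limits. Applying this to $f = i_\E$ produces the desired left adjoint of $U$.

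Finally, to obtain a concrete description, I would trace $X \in \E$ through the composite $\E \xrightarrow{(i_\E)_!} \E_{/S^1_\E} \xrightarrow{\simeq} \E^{S^1}$. The composite $X \to 1_\E \xrightarrow{i_\E} S^1_\E$ has fiber $X \times \Omega S^1_\E$ over $i_\E$, and the self-equivalence induced on this fiber is $id_X \times s$, where $s: \Omega S^1_\E \to \Omega S^1_\E$ is the self-equivalence from Example \ref{Ex Omega and S}. Thus the left adjoint sends $X$ to $(X \times \Omega S^1_\E,\, id_X \times s)$. There is no genuine obstacle in this argument; the only point requiring care is the identification in the first step, which is unpacked cleanly by the explicit diagrammatic form of the descent equivalence described above.
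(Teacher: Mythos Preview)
Your proposal is correct and follows essentially the same approach as the paper: use the descent equivalence $\E^{S^1} \simeq \E_{/S^1_\E}$ to identify $U$ with the pullback functor along the basepoint, then observe that this pullback has post-composition as an obvious left adjoint. Your concrete description of the left adjoint as $X \mapsto (X \times \Omega S^1_\E,\, id_X \times s)$ also matches what the paper records in the remark immediately following the lemma.
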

 
 \begin{proof}
  By descent we have an equivalence $\E_{/ S^1_\E} \to \E^{S^1}$. The composition map $\E_{/ S^1_\E} \to \E$ corresponds to pulling back 
  along the map $o: 1_\E \to S^1_\E$. But the map $o^*: \E_{/ S^1_\E} \to \E$ has an obvious left adjoint, namely 
  $o_! : \E \to \E_{/ S^1_\E}$. Thus the forgetful functor has a left-adjoint.
 \end{proof}
 
 \begin{remone} \label{Rem Free Alg on X}
  We realized that the map $o^*$ corresponds to the forgetful functor $U$. What does the composition map $o_!$ correspond to
  when we think about it as a map $\E \to \E^{S^1}$? An object $X$ is taken to the map $X \to 1_\E \to S^1_\E$. 
  Using out previous pullback construction we get following pullback diagram.
  \begin{center}
   \begin{tikzcd}[row sep=0.3in, column sep=0.6in]
     X \times \Omega S^1_\E \arrow[r, shift left=0.05in, "id_X \times s"] 
     \arrow[r, shift right=0.05in, "id_X \times id_{\Omega S^1_\E}"'] \arrow[d] 
     & 
     X \times \Omega S^1_\E \arrow[d] \arrow[r] \arrow[dr, phantom, "\ulcorner", very near start] & 
     X \arrow[d]
     \\
    \Omega S^1_\E \arrow[r, shift left=0.05in, "s"] 
      \arrow[r, shift right=0.05in, "id_{\Omega S^1_\E}"'] \arrow[d] 
    \arrow[dr, phantom, "\ulcorner", very near start] & 
    \Omega S^1_\E \arrow[d] \arrow[r] \arrow[dr, phantom, "\ulcorner", very near start] & 
    1_\E \arrow[d, "o"]
    \\
    1_\E \arrow[r, shift left=0.05in, "id_{1_\E}"] \arrow[r, shift right=0.05in, "id_{1_\E}"'] & 1_\E \arrow[r] & S^1_\E
   \end{tikzcd}
  \end{center}
  Thus the map $\E \to \E^{S^1}$ takes an object $X$ to the equivalence $id_X \times s: X \times \Omega S^1_\E \to X \times \Omega S^1_\E$.
 \end{remone}

  Let $A$ be an object and  $\E_{A /}$ be the category of objects under $A$. Then we can define the category of equivalences under $A$ 
  as the pullback $\E_{A /} \times_{\E} \E^{S^1}$ induced by the forgetful map $U: \E^{S^1} \to \E$.
  An object in this category is a chain $A \xrightarrow{ \ \ x \ \ } X \xrightarrow{ \ \ f \ \ } X$ where $f$ is an equivalence.
 
 \begin{propone} \label{Prop Initial Loop Algebra}
  The category $\E_{A /} \times_\E \E^{S^1}$ has an initial object.
 \end{propone}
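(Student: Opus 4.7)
The plan is to exhibit the initial object using the adjunction $F \dashv U$ of the previous lemma, combined with the standard categorical principle: for any adjunction with unit $\eta$, the pair $(F(A), \eta_A)$ is initial in the comma $(\infty,1)$-category $A \downarrow U$. Here $A \downarrow U$ agrees with $\E_{A/} \times_\E \E^{S^1}$ because the latter is by construction the strict pullback along the forgetful functor $U \colon \E^{S^1} \to \E$, and its objects are precisely pairs consisting of an object $(X, f \colon X \xrightarrow{\simeq} X)$ of $\E^{S^1}$ together with a map $A \to X$ in $\E$.

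By Remark \ref{Rem Free Alg on X}, the left adjoint $F \colon \E \to \E^{S^1}$ sends $A$ to the equivalence $id_A \times s \colon A \times \Omega S^1_\E \to A \times \Omega S^1_\E$. The unit $\eta_A \colon A \to A \times \Omega S^1_\E$ is the map induced on the iterated pullback, pairing $id_A$ with $A \to 1_\E \to \Omega S^1_\E$, where the second leg is the canonical point arising from the square defining $\Omega S^1_\E$. Thus the candidate initial object is
$$\bigl(A \xrightarrow{\eta_A} A \times \Omega S^1_\E,\ id_A \times s\bigr).$$
To verify initiality, for any object $(A \xrightarrow{x} X,\ f \colon X \xrightarrow{\simeq} X)$ the mapping space from the candidate is, by the pullback description of $\E_{A/} \times_\E \E^{S^1}$, the homotopy fiber
$$Map_{\E^{S^1}}\bigl((A \times \Omega S^1_\E,\, id_A \times s),\, (X,f)\bigr) \times_{Map_\E(A,X)} \{x\},$$
where the map to $Map_\E(A, X)$ is precomposition with $\eta_A$. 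The adjunction equivalence of the previous lemma identifies the left factor with $Map_\E(A, X)$, and the triangle identity for $\eta$ identifies the connecting map with the identity under this equivalence. Hence the fiber is contractible.

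The only potentially delicate point is confirming that the triangle identity genuinely yields an equivalence of the full mapping spaces rather than merely an identification on connected components. This is automatic once the adjunction has been established at the level of $(\infty,1)$-categories, which is exactly the content of the preceding lemma, so no further work beyond bookkeeping is required.
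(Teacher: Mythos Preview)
Your proposal is correct and follows essentially the same approach as the paper: both use the left adjoint $F$ to $U$ from the preceding lemma and identify the initial object as $(F(A),\eta_A)$. The only difference is that the paper packages the adjunction-implies-initiality step as a citation to \cite[Theorem 7.57]{Ra17b}, whereas you unpack the mapping-space computation directly via the triangle identity; the content is the same.
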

 
 \begin{proof}
  We have the following pullback diagram
  \begin{equation} \label{eq:algebra square}
   \pbsq{\E_{A /} \times_\E \E^{S^1}}{\E_{ A /}}{\E^{S^1}}{\E}{}{}{}{U}
  \end{equation}
  As $\E_{A /}$ has an initial object, according to \cite[2.3]{gepnerkock2017univalence}, the pullback in \ref{eq:algebra square} has an initial object if $U$ has a left adjoint $L$.
  Moreover, in that case the initial object is the unit map of the adjunction $u_A: A \to UL(A)$. 
  However, we have just proven the existence of a left adjoint in the previous lemma. Thus $\E_{A /} \times_\E \E^{S^1}$ has the initial object 
  $A \xrightarrow{ \ \ id_A \times o \ \ } A \times \Omega S^1_\E \xrightarrow{ \ \ id_A \times s \ \ } A \times \Omega S^1_\E.$
 \end{proof}

  Concretely, being initial means that for any other object $A \xrightarrow{ \ \ x \ \ } X \xrightarrow{ \ \ f \ \ } X$
  There is a unique (up to contractible choice) map $g: A \times \Omega S^1_\E \to X$ filling the diagram below.
  \begin{center}
   \begin{tikzcd}[row sep=0.15in, column sep=0.25in]
    & A \times \Omega S^1_\E \arrow[r, "id_A \times s"] \arrow[dd, "g", dashed] & A \times \Omega S^1_\E \arrow[dd, "g", dashed] \\
    A \arrow[ur, "id_A \times o"] \arrow[dr, "x"] & & \\
    & X \arrow[r, "f"] & X
   \end{tikzcd}
  \end{center}
  In particular if $A = 1_\E$ then the initial object in $\E_{1_\E /} \times_\E \E^{S^1}$
  is of the form 
  $$ 1_\E \xrightarrow{ \ \ o \ \ } \Omega S^1_\E \xrightarrow{ \ \ s \ \ } \Omega S^1_\E$$
  which implies that $\Omega S^1_\E$ is an initial $\mathbb{A}$-algebra.

  \subsection{\texorpdfstring{$\Omega S^1_\E$}{S} is a Group Object}
 Having shown $\Omega S_\E^1$ is $0$-truncated we can now easily prove the following. 
 
 \begin{theone} \label{The Loop is a Group}
  $\Omega S^1_\E$ is a group object in $\E$.
 \end{theone}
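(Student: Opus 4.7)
The plan is to upgrade the initial algebra structure from Proposition \ref{Prop Initial Loop Algebra} into a group object structure, using $0$-truncatedness from Theorem \ref{The Loops Zero Truncated} to collapse the infinite tower of $(\infty,1)$-categorical coherences into a finite list of ordinary identities between maps. Concretely, a group object in $\E$ is specified by data (unit, multiplication, inverse) satisfying associativity, unitality and the inverse axioms; since the mapping spaces into a $0$-truncated object are discrete, all higher coherences are automatic, and it suffices to exhibit the three structure maps and check the axioms as equalities in $\E$.

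First I would construct the multiplication $m \colon \Omega S^1_\E \times \Omega S^1_\E \to \Omega S^1_\E$ by applying the universal property with $A = \Omega S^1_\E$ to the data
\[
\Omega S^1_\E \xrightarrow{\ id \ } \Omega S^1_\E \xrightarrow{\ s \ } \Omega S^1_\E,
\]
which, because $s$ is an equivalence (Example \ref{Ex Omega and S}), gives a unique $m$ satisfying $m \circ (id \times o) = id$ and $m \circ (id \times s) = s \circ m$. Next I would construct the inverse $i \colon \Omega S^1_\E \to \Omega S^1_\E$ by applying the same universal property with $A = 1_\E$ to $1_\E \xrightarrow{o} \Omega S^1_\E \xrightarrow{s^{-1}} \Omega S^1_\E$, producing $i$ with $i \circ o = o$ and $i \circ s = s^{-1} \circ i$. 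The unit of the group is of course $o$ itself.

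The axioms are then verified by a sequence of applications of the uniqueness part of Proposition \ref{Prop Initial Loop Algebra}. For associativity, both ternary maps $(a,b,c) \mapsto m(m(a,b),c)$ and $(a,b,c) \mapsto m(a,m(b,c))$ take the value $m(a,b)$ when $c = o$ and are $s$-equivariant in $c$; with $A = \Omega S^1_\E \times \Omega S^1_\E$, uniqueness forces them to agree. The left unit equation $m(o,a) = a$ follows the same pattern. For the inverse axiom I would first derive the auxiliary identity $m(s(x),y) = m(x,s(y))$ by the same uniqueness principle, and then observe that $a \mapsto m(a,i(a))$ and the constant map at $o$ both satisfy the recursion $\phi(o) = o$, $\phi \circ s = \phi$, so coincide.

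The main obstacle is bookkeeping rather than conceptual: one has to choose the right base object $A$ and the right equivalence target in each application of the universal property so that both sides of the desired identity genuinely satisfy the same initial algebra specification. The reason this gymnastics works at all, and produces a strict group object rather than merely a coherent higher group, is precisely the $0$-truncatedness of $\Omega S^1_\E$, which makes every equation between parallel maps into it a proposition and therefore avoids any need to track higher homotopies.
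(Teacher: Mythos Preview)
Your approach is correct but follows a genuinely different route from the paper's. The paper argues representably in two lines: for every $X$ one has $Map(X,\Omega S^1_\E)\simeq \Omega\, Map(X,S^1_\E)$, so this mapping space is a loop space and hence its $\pi_0$ carries a group structure; then $0$-truncatedness (Theorem \ref{The Loops Zero Truncated}) identifies $\pi_0$ with the mapping space itself, exhibiting $\Omega S^1_\E$ as a group object at once. You instead work internally, manufacturing $m$, $i$ and the unit explicitly via the initial-algebra description of Proposition \ref{Prop Initial Loop Algebra} and verifying each axiom by its uniqueness clause, with $0$-truncatedness only entering to guarantee that the identities are properties rather than additional structure.

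The paper's argument is shorter, avoids all bookkeeping, and makes transparent that the resulting group structure is literally the loop-space structure (which is then exploited in the explicit description right after the theorem and in Lemma \ref{Lemma S No Fixed Points}). Your construction is more hands-on and has the virtue of pinning down the multiplication concretely as the unique map with $m(-,o)=\mathrm{id}$ and $m(-,s(-))=s\circ m$, at the cost of juggling several choices of $A$ and target self-equivalence. One small point to close off: your sketch verifies only the right-inverse law $m(a,i(a))=o$; the left-inverse law then follows from the standard monoid-with-right-inverses argument (or by rerunning the same uniqueness trick with target equivalence $\mathrm{id}$), and you should say which route you take.
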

 
 \begin{proof}
  As we have shown in \ref{eq:loops}, we have an equivalence $\Map(X, \Omega S^1_\E) \simeq \Omega \Map(X, S^1_\E)$.
  Thus the space $\Map(X, \Omega S^1_\E)$ is a loop space, which implies that $\pi_0(\Map(X, \Omega S^1_\E))$
  is a group. However, as $\Omega S^1_\E$ is $0$-truncated we know that 
  $\pi_0(\Map(X, \Omega S^1_\E)) = \Map(X, \Omega S^1_\E)$, which implies that $\Map(X, \Omega S^1_\E)$ is itself
  a group. This proves that $\Omega S^1_\E$ is group object in $\tau_0 \E$ and more generally in $\E$.
 \end{proof}
 
 It is instructive to concretely understand the group structure on the set $\Hom(X, \Omega S^1_\E)$, which corresponds to loops around the point $X \to 1_\E \xrightarrow{ \ i \ } S^1_\E$ in the space $\Map(X,S^1)$, for which we need a change of perspective on the map $X \to 1_\E \to S^1_\E$.
 By the arguments in \cref{Rem Free Alg on X} we know that the map corresponds to the equivalence
 $id_X \times s: X \times \Omega S^1_\E \to X \times \Omega S^1_\E$.
 From this perspective a loop is simply a commutative diagram 
 \begin{equation} \label{eq:loop correspondence}
  \begin{tikzcd}[row sep=0.3in, column sep=0.3in]
    X \times \Omega S^1_\E \arrow[r, "id_X \times s"] \arrow[d, "\simeq"', "f"] & X \times \Omega S^1_\E \arrow[d, "\simeq"', "f"] \\
    X \times \Omega S^1_\E \arrow[r, "id_X \times s"] & X \times \Omega S^1_\E
  \end{tikzcd}
 \end{equation}
 The group operation on $\Hom(X, \Omega S^1_\E)$ then corresponds to composing two squares vertically.
 
 \begin{exone}
  Let us see how this example looks like in the classical setting of spaces. In spaces we know that $\Omega S^1 = \mathbb{Z}$. 
  The map $s: \mathbb{Z} \to \mathbb{Z}$ then corresponds to the successor map, which takes $n$ to $n+1$.
  \par 
  We want to classify the automorphisms of $f: \mathbb{Z} \to \mathbb{Z}$ that commute with $s$. A simple exercise shows that 
  if $f$ commutes with $s$ then there exists an integer $m$ such that $f(n) = n + m$. This gives us a bijection between 
  $\mathbb{Z}$ and $\Omega S^1$.
 \end{exone}

  The map $s: \Omega S^1_\E \to \Omega S^1_\E$ corresponds to composing the square in \cref{eq:loop correspondence} with the square
  \begin{center}
   \comsq{X \times \Omega S^1_\E}{X \times \Omega S^1_\E}{X \times \Omega S^1_\E}{X \times \Omega S^1_\E}{
   id_X \times s}{id_X \times s}{id_X \times s}{id_X \times s}
  \end{center}
  which means it takes $f$ to $(id_X \times s)f$.

 Now that we know $\Omega S^1_\E$ is a group object we can think of the map $s:\Omega S^1_\E \to \Omega S^1_\E$ as a map 
 of groups and so we might wonder how ``free" this map is. 
 
 \begin{lemone} \label{Lemma S No Fixed Points}
  In the equalizer diagram 
  \begin{center}
   \begin{tikzcd}[row sep=0.5in, column sep=0.5in]
    A \arrow[r] & \Omega S^1_\E \arrow[r, shift left=0.05in, "s"] 
    \arrow[r, shift right=0.05in, "id"'] & \Omega S^1_\E
   \end{tikzcd}
  \end{center}
   we have $A \cong \emptyset_\E$, the initial object.
 \end{lemone}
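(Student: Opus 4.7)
The plan is to use the initial algebra property of $(\Omega S^1_\E, o, s)$ from Proposition \ref{Prop Initial Loop Algebra} to produce a ``parity'' map from $\Omega S^1_\E$ into the two-element object $2 := 1_\E \sqcup 1_\E$, whose nontrivial self-equivalence acts without fixed points. Heuristically, $s$ should shift a point of $\Omega S^1_\E$ by one, $\phi$ records parity, and $\phi \circ s$ swaps parity, so any fixed point of $s$ would yield a fixed point of the swap, of which there are none.

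First I would construct the swap equivalence $\sigma : 2 \to 2$, determined by $\sigma \circ i_1 = i_2$ and $\sigma \circ i_2 = i_1$; since $\sigma \circ \sigma = id_2$, this is an equivalence. Together with the basepoint $i_1 : 1_\E \to 2$, the tuple $(1_\E \xrightarrow{i_1} 2, \sigma)$ is an object of $\E_{1_\E /} \times_\E \E^{S^1}$, and so initiality of $(1_\E \xrightarrow{o} \Omega S^1_\E, s)$ yields a unique map $\phi : \Omega S^1_\E \to 2$ with $\phi \circ o = i_1$ and $\sigma \circ \phi = \phi \circ s$. Letting $e : E \to \Omega S^1_\E$ denote the equalizer of $s$ and $id$, the equation $s \circ e = e$ gives $\sigma \circ (\phi \circ e) = \phi \circ s \circ e = \phi \circ e$, so $\phi \circ e$ factors through the equalizer $K$ of $\sigma$ and $id_2$.

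The proof then reduces to showing $K \simeq \emptyset_\E$ and invoking strictness of the initial object to deduce $E \simeq \emptyset_\E$. Using distributivity of pullbacks over coproducts, $2 \times 2$ splits as a coproduct of four copies of $1_\E$ indexed by $\{1,2\}^2$; the diagonal $\Delta : 2 \to 2 \times 2$ hits the $(1,1)$ and $(2,2)$ summands while $(id_2, \sigma) : 2 \to 2 \times 2$ hits the $(1,2)$ and $(2,1)$ summands, so by disjointness of the coproduct $1_\E \sqcup 1_\E$ every pairwise pullback vanishes and $K \simeq \emptyset_\E$. The main conceptual step is thus choosing the right test algebra $(2, \sigma)$; the remaining inputs, namely disjointness of coproducts and strictness of the initial object, are both available in $\E$ as consequences of the descent hypothesis in our standing assumptions.
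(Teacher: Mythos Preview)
Your argument is correct and takes a genuinely different route from the paper's.  The paper works \emph{over} the equalizer $A$: it uses the concrete description of $Hom(A,\Omega S^1_\E)$ as automorphisms of $A\times\Omega S^1_\E$ commuting with $id_A\times s$ to show that $A\times\Omega S^1_\E \to A$ is an isomorphism, deduces via descent that $\E_{/A}$ has no nontrivial self-equivalences, and from $id_A\times\sigma \simeq id$ obtains $A\simeq A\coprod A$; then the subobject classifier forces $Sub(A)$ to collapse and hence $A\cong\emptyset_\E$.  Your approach instead exploits the initial-algebra property of $(\Omega S^1_\E,o,s)$ (Proposition~\ref{Prop Initial Loop Algebra}) directly, choosing the test algebra $(2,\sigma)$ so that the induced parity map sends the equalizer of $s$ into the equalizer of $\sigma$, which is visibly empty by disjointness of coproducts.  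Your proof is shorter and uses only descent (for disjointness and strictness of $\emptyset_\E$), never invoking the subobject classifier or the preceding analysis of the group structure; the paper's argument, by contrast, stays closer to the section's running description of $\Omega S^1_\E$ and illustrates how that description can be leveraged.
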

 
 \begin{proof}
  First, notice $A$ is also the equalizer of the two maps $id_A \times s,id: A \times \Omega S^1_\E \to A \times \Omega S^1_\E$. 
  By \cref{Rem Free Alg on X}, the map $A \to \Omega S^1_\E$, which is a point $\Map(A,\Omega S^1_\E) \simeq \Omega \Map(A,S^1_\E)$, corresponds to a diagram 
  \begin{center}
  \begin{tikzcd}[row sep=0.3in, column sep=0.3in]
    A \times \Omega S^1_\E \arrow[r, "id_A \times s"] \arrow[d, "\cong"', "f"] & A \times \Omega S^1_\E \arrow[d, "\cong"', "f"] \\
    A \times \Omega S^1_\E \arrow[r, "id_A \times s"] & A \times \Omega S^1_\E
  \end{tikzcd}.
 \end{center}
  such that $(id_A \times s)f \simeq f$. However, as the diagram is commutative, we also have $(id_A \times s)f \simeq f(id_A \times s)$,
  which implies that $f(id_A \times s) \simeq f$. This gives us following diagram 
  \begin{center}
   \begin{tikzcd}[row sep=0.3in, column sep=0.5in]
    A \times \Omega S^1_\E \arrow[r, shift left=0.05in, "id_A \times s"] \arrow[r, shift right=0.05in, "id"'] 
    & A \times \Omega S^1_\E \arrow[r, "id_A \times i_{\E}"] \arrow[d, "f"', "\cong"] & A \arrow[dl, dashed, "g"] \\
    & A \times \Omega S^1_\E
   \end{tikzcd}.
  \end{center}
  The fact that the isomorphism $f$ factors through $id_A \times i_\E$ implies that $id_A \times i_\E$ is an isomorphism. 
  Now we have following diagram 
  \begin{center}
   \begin{tikzcd}[row sep=0.3in, column sep=0.3in]
    A \times \Omega S^1_\E \arrow[r, shift left=0.05in, "id_A \times s"] 
      \arrow[r, shift right=0.05in, "id"'] \arrow[d, "\simeq"] 
    \arrow[dr, phantom, "\ulcorner", very near start] & 
    A \times \Omega S^1_\E \arrow[d, "\simeq"] \arrow[r] \arrow[dr, phantom, "\ulcorner", very near start] & 
    A \arrow[d]\\
    A \arrow[r, shift left=0.05in, "id_A"] \arrow[r, shift right=0.05in, "id_A"'] & A \arrow[r] & A \times S^1_\E
   \end{tikzcd}
  \end{center}
  By homotopy invariance of colimits, the map $A \to A \times S^1_\E$ is an equivalence. This by descent implies that we have an equivalence 
  $ \E_{/A} \simeq \E_{/A \times S^1_\E} \simeq (\E_{/A})^{S^1},$
  which means that $\E_{/A}$ does not have any non-trivial self-equivalences. In particular this means that $A \times S^1_\E \simeq S^1_\E$.
  \par 
  Let $\sigma : 1_\E \coprod 1_\E \to 1_\E \coprod 1_\E$ be the switch map. Then, $id_A \times \sigma \simeq id$ as we do not have any non-trivial automorphisms.
  However, $\Coeq(id_A \times \sigma, id) = A \times S^1 \simeq A$ and $\Coeq(id,id) \simeq A  \times S^1_\E \coprod A \times S^1_\E \simeq A \coprod A$. 
  Thus $A \simeq A \coprod A$. Let $\Omega$ be the subobject classifier. Then 
  $$\Sub(A) \cong \Map(A \coprod A, \Omega) \simeq \Map(A, \Omega) \times \Map(A,\Omega) \cong \Sub(A) \times \Sub(A)$$
  which implies that the diagonal $\Delta: \Sub(A) \to \Sub(A) \times \Sub(A)$ is an isomorphism of sets, which is only possible if $\Sub(A)$ is either empty or the point. 
  This means that $A$ is equivalent to its least subobject, which is the initial object $\emptyset_\E$.
  Hence $A \cong \emptyset_\E$ which finishes the proof.
 \end{proof}

 \subsection{Covering Spaces} 
  Recall from classical algebraic topology, that there is a map 
  $n: S^1 \to S^1$
  that wraps the circle around itself $n$ times. This covering map gives us following pullback square
  \begin{center}
  	\pbsq{ \{0 \comma 1 \comma ... \comma n-1 \} }{S^1}{*}{S^1}{}{}{n}{i}.
  \end{center}
  The goal is to replicate this process in $\E$.
  In order to do so we need additional notation.
  Let $[n] = \{ 0, 1,2, ..., n-1 \}$. Notice $[n]$ has $n$ elements.
  We define the map $s_n: [n] \to [n]$ that sends $i$ to $i+1$ if $i<n-1$ and $n-1$ to $0$.   
  
   Let $nS^1$ be the colimit of the following diagram 
  \begin{center}
  	\begin{tikzcd}[row sep=0.5in, column sep=0.5in]
  		\leb n \reb \arrow[r, shift left=0.05in, "id_{[n]}"] \arrow[r, shift right=0.05in, "s_n"'] & \leb n \reb  \arrow[r] & nS^1
  	\end{tikzcd}.
  \end{center}
  Notice that $nS^1$ is homotopy equivalent to $S^1$, but has a different cell structure.
  In particular, it has $n$ $0$-cells and $n$ $1$-cells.
   
  Using the colimit description, we can define a map $n_\E: nS^1_\E \to S^1_\E$ in $\E$ as follows:
  \begin{equation}\label{eq:ns pullback}
  	\begin{tikzcd}[row sep=0.3in, column sep=0.3in]
  		\ds \coprod_{[n]} 1_\E \arrow[r, shift left=0.05in, "s_n"] 
  		\arrow[r, shift right=0.05in, "id"'] \arrow[d] 
  		\arrow[dr, phantom, "\ulcorner", very near start] & 
  		\ds \coprod_{[n]} 1_\E \arrow[d] \arrow[r] \arrow[dr, phantom, "\ulcorner", very near start] & 
  		nS^1_\E \arrow[d, "n_\E"]\\
  		1_\E \arrow[r, shift left=0.05in, "id"] \arrow[r, shift right=0.05in, "id"'] & 1_\E \arrow[r] & S^1_\E
  	\end{tikzcd}.
  \end{equation}
  We can think of the induced map of colimits, $n_\E: nS^1_\E \to S^1_\E$, as the map that corresponds to the classical 
  "wrap-around map" in algebraic topology. 
 \begin{propone} \label{Prop Omega Infinite}
  Let $n$ be a natural number. Then $ \coprod_n \Omega S^1_\E \cong \Omega S^1_\E.$
 \end{propone}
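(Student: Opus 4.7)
The plan is to construct explicit mutually inverse maps $h : \Omega S^1_\E \to \coprod_n \Omega S^1_\E$ and $g : \coprod_n \Omega S^1_\E \to \Omega S^1_\E$ via three applications of the initial algebra universal property of $\Omega S^1_\E$ (Proposition \ref{Prop Initial Loop Algebra}) with three different self-equivalence structures. I assume $n \geq 1$; the case $n = 1$ is trivial, and $n = 0$ does not belong in the statement since $\coprod_0 \Omega S^1_\E \cong \emptyset_\E$.

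To construct $h$, equip $\coprod_n \Omega S^1_\E$ with a pointed self-equivalence structure as follows. Writing $\iota_i : \Omega S^1_\E \to \coprod_n \Omega S^1_\E$ for the $i$-th summand inclusion, set $o_n := \iota_0 \circ o$, and define the ``cyclic successor'' equivalence $s_n$ by $s_n \circ \iota_i = \iota_{i+1}$ for $i < n-1$ and $s_n \circ \iota_{n-1} = \iota_0 \circ s$. Applying Proposition \ref{Prop Initial Loop Algebra} to the algebra $(o_n, s_n)$ yields a unique $h$ with $h \circ o = o_n$ and $h \circ s = s_n \circ h$.

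To construct $g$, first apply Proposition \ref{Prop Initial Loop Algebra} to $(o, s^n)$ to obtain the unique map $g_0 : \Omega S^1_\E \to \Omega S^1_\E$ with $g_0 \circ o = o$ and $g_0 \circ s = s^n \circ g_0$. Then use the coproduct universal property to define $g$ by $g \circ \iota_i := s^i \circ g_0$. A direct summand-by-summand check using the defining relation of $g_0$ shows $g \circ s_n = s \circ g$. The identity $g \circ h = id_{\Omega S^1_\E}$ now follows immediately from initiality, since both $g \circ h$ and $id$ are maps of pointed self-equivalence objects $(\Omega S^1_\E, o, s) \to (\Omega S^1_\E, o, s)$.

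For the opposite composite, the coproduct universal property reduces $h \circ g = id$ to checking $h \circ g \circ \iota_i = \iota_i$ for each $i$. This unwinds to $s_n^i \circ h \circ g_0 = \iota_i$, and since a direct check gives $s_n^i \circ \iota_0 = \iota_i$, the task reduces to $h \circ g_0 = \iota_0$. The key observation is $s_n^n = \coprod_n s$, verified by tracing a summand around the cyclic shift; from this, both $h \circ g_0$ and $\iota_0$ are maps of pointed self-equivalence objects $(\Omega S^1_\E, o, s) \to (\coprod_n \Omega S^1_\E, o_n, \coprod_n s)$, so a third application of Proposition \ref{Prop Initial Loop Algebra} yields equality. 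The main subtlety is bookkeeping: four closely related self-equivalences are in play ($s$, $s^n$, $s_n$, and $\coprod_n s = s_n^n$), and at each step one must invoke the universal property with precisely the right one; once the structures are set up correctly, every verification collapses to a uniqueness statement.
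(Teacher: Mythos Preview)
Your argument is correct and takes a genuinely different route from the paper. The paper proves this proposition via a covering-space argument: it builds an alternative cell structure $nS^1$ on the circle (coequalizer of $id_{[n]}, s_n : [n] \rightrightarrows [n]$), observes that $nS^1 \simeq S^1$ as spaces, and hence obtains an internal ``degree-$n$'' map $n_\E : nS^1_\E \to S^1_\E$. Descent then identifies the fiber of $n_\E i_\E$ over $i_\E$ both as $\coprod_n \Omega S^1_\E$ (pulling back in two stages through $\coprod_n 1_\E$) and as $\Omega S^1_\E$ (since $n_\E i_\E \simeq i_\E$), yielding the isomorphism.

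Your approach is more algebraic and self-contained: you never invoke descent beyond what already went into Proposition~\ref{Prop Initial Loop Algebra}, and you exhibit the isomorphism explicitly by writing down mutually inverse algebra maps. The covering-space proof is shorter and more conceptual once the descent machinery is in place, and it generalizes transparently to other finite covers; your proof, by contrast, makes the isomorphism completely concrete and shows that the result is really a formal consequence of initiality alone. Both are valid; the paper's argument foregrounds the geometric picture, while yours foregrounds the universal property. (Since $\Omega S^1_\E$ is $0$-truncated, your summand-by-summand verifications and the three uniqueness appeals are unproblematic --- all the relevant mapping spaces are discrete, so no higher coherence is needed.)
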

 \begin{proof}
 	We can pull back the right hand square in \cref{eq:ns pullback} along the map $i_\E: 1_\E \to nS^1_\E$ to get the diagram
 	\begin{center}
 		\begin{tikzcd}[row sep=0.3in, column sep=0.3in]
 		\Omega S^1_\E \arrow[ddr, bend right=20] \arrow[r, "\simeq", phantom] & \ds \coprod_{[n]} \Omega S^1_\E \arrow[d] \arrow[r] \arrow[dr, phantom, "\ulcorner", very near start] & 1_\E \arrow[d, "i_\E"] \arrow[dd, bend left=45, "i_\E", ""{name=U}]\\
 		  & 	\ds \coprod_{[n]} 1_\E \arrow[r] \arrow[d] \arrow[dr, phantom, "\ulcorner", very near start] & nS^1_\E \arrow[d, "n_\E"] \\
 		 &	1_\E \arrow[r] & S^1_\E
 		 \arrow[from=U, to=2-3, phantom, "\simeq"]
 		\end{tikzcd}.
 	\end{center}
    The fact that we have an equivalence $n_\E i_\E \simeq i_\E$ implies, by homotopy invariance of pullbacks, that $\Omega S^1_\E \simeq \coprod_{[n]} \Omega S^1_\E$. However as both sides are actually $0$-truncated we get in fact an isomorphism 
    $ \Omega S^1_\E \cong \coprod_{[n]} \Omega S^1_\E$.
\end{proof}
  
\section{Constructing Peano Natural Number Objects} \label{Sec Constructing Peano Natural Number Objects}
 The goal of the this sections is to prove that every $(\infty,1)$-category $\E$ satisfying the conditions of \cref{Subsec Notation}
 has a Peano and Freyd natural number object.
 We use our analysis of $\Omega S^1_\E$ to give two constructions of natural number object $(\mathbb{N}_\E,o,s)$ in $\tau_0 \E$, a non-canonical one (\cref{prop:noncanonical nno}) and a canonical one (\cref{prop:canonical NNO}). 
 Then we finish this section by showing that our natural number object in $\tau_0 \E$ gives us a Peano natural 
 number object (\cref{Def Peano NNO}) 
 in $\E$ (\cref{Lemma EHT Peano NNO})  
 and a Freyd natural number object (\cref{Def Freyd NNO}) in $\E$ (\cref{Prop EHT Freyd NNO}).
 In the next section, we will then show that this object $(\mathbb{N}_\E,o,s)$ is also a Lawvere natural number object 
 (\cref{Def NNO Lawvere}) in $\E$.
 
 \subsection{Natural Number Objects in \texorpdfstring{$(\infty,1)$}{(oo,1)}-Categories}
 In this subsection we introduce natural number objects in $(\infty,1)$-categories and review important results regarding natural number objects in elementary toposes, as discussed in \cite{johnstone2002elephanti,johnstone2002elephantsii}.
 Throughout this section $\E$ is an $(\infty,1)$-category satisfying the conditions in \cref{Subsec Notation}.
 
 \begin{defone} \label{Def NNO Lawvere}
  A {\it Lawvere natural number object} in $\E$ is a triple $(\mathbb{N}_\E,o:1_\E \to \mathbb{N}_\E,s:\mathbb{N}_\E \to \mathbb{N}_\E)$ 
  such that $(\mathbb{N}_\E , o , s)$ is initial.
 \end{defone}
   
 \begin{remone} \label{Rem NNO Space}
  To make things more explicit, this is saying that for any other triple $(X,b:1_\E \to X,u:X \to X)$
  the space of maps $f: \mathbb{N}_\E \to X$ that make the following diagram commute is contractible
  \begin{center}
   \begin{tikzcd}[row sep=0.1in, column sep=0.3in]
    & \mathbb{N}_\E \arrow[r, "s"] \arrow[dd, "f", dashed] & \mathbb{N}_\E \arrow[dd, "f", dashed] \\
    1_\E \arrow[ur, "o"] \arrow[dr, "b"'] & & \\
    & X \arrow[r, "u"] & X
   \end{tikzcd}.
  \end{center}
  We can rephrase this by saying the limit of the diagram, which we denote by $\Ind(X,b,u)$, is contractible.
  \begin{equation}\label{eq:limit Lawvere ext}
  	 \begin{tikzcd}[row sep=0.3in, column sep=0.3in]
  	 	* \arrow[r, "b"] &[-0.15in] \Map_\E(1_\E,X) &[-0.1in] \Map_\E(\mathbb{N}_\E,X) \arrow[l, "o^*"'] \arrow[r, "(s^* \comma u_*)"]  &\Map_\E(\mathbb{N}_\E,X) \times \Map_\E(\mathbb{N}_\E,X) &[-0.1in] \Map_\E(\mathbb{N}_\E,X) \arrow[l, "\Delta"']
  	 \end{tikzcd}.
  \end{equation}
\end{remone}
 
 There are two alternative ways to define a natural number object.
 
 \begin{defone} \label{Def Freyd NNO}
  A {\it Freyd natural number object} is a triple $(\mathbb{N}_\E,o:1_\E \to \mathbb{N}_\E,s:\mathbb{N}_\E \to \mathbb{N}_\E)$ 
  such that the following two diagrams are colimit diagrams
  \begin{center}
    \begin{tikzcd}[row sep=0.3in, column sep=0.3in]
    \mathbb{N}_\E \arrow[r, shift left=0.05in, "id"] \arrow[r, shift right=0.05in, "s"'] & \mathbb{N}_\E  \arrow[r] & 1_\E
    \end{tikzcd},
   \begin{tikzcd}[row sep=0.3in, column sep=0.3in]
    \emptyset \arrow[d] \arrow[r] & \mathbb{N}_\E \arrow[d, "s"] \\
    1_\E \arrow[r, "o"] &\mathbb{N}_\E \arrow[ul, phantom, "\ulcorner", very near start]
   \end{tikzcd}.
  \end{center}
 \end{defone}
 
 \begin{defone} \label{Def Peano NNO}
  A {\it Peano natural number object}  is a triple $(\mathbb{N}_\E,o:1_\E \to \mathbb{N}_\E,s:\mathbb{N}_\E \to \mathbb{N}_\E)$ 
  that satisfies following conditions:
  \begin{enumerate}
   \item $s$ is monic.
   \item $o$ and $s$ are disjoint subobjects of $\mathbb{N}_\E$.
   \item Assume we have a subobject $\mathbb{N}_\E' \hookrightarrow \mathbb{N}$ that is closed under the maps $o$ and $s$, meaning we have a commutative diagram 
    \begin{center}
   \begin{tikzcd}[row sep=0.1in, column sep=0.3in]
    & \mathbb{N}_\E' \arrow[r, "s"] \arrow[dd, hookrightarrow] & \mathbb{N}_\E' \arrow[dd, hookrightarrow] \\
    1_\E \arrow[ur, "o"] \arrow[dr, "o"] & & \\
    & \mathbb{N}_\E \arrow[r, "s"] & \mathbb{N}_\E
   \end{tikzcd}.
  \end{center}
  Then the inclusion $\mathbb{N}_\E' \overset{\cong}{\hookrightarrow} \mathbb{N}_\E$ is an isomorphism.
 \end{enumerate}
 \end{defone}
 
 The next result, which states that the three definitions of natural number object coincide in the underlying elementary topos $\tau_0\E$, is a direct implication of the same result for elementary toposes \cite[Theorem D5.1.2]{johnstone2002elephantsii}.
 
 \begin{theone} \label{The ET NNOs Same}
  Let $\E$ be a $(\infty,1)$-category satisfying the conditions in \cref{Subsec Notation}. Then for any triple
  $(\mathbb{N}_\E,o: 1_\E \to \mathbb{N}_\E,s:\mathbb{N}_\E \to \mathbb{N}_\E)$ in the underlying elementary topos $\tau_0\E$ the following are equivalent:
  \begin{enumerate}
   \item $(\mathbb{N}_\E,o,s)$ is a Lawvere natural number object. 
   \item $(\mathbb{N}_\E,o,s)$ is a Freyd natural number object.
   \item $(\mathbb{N}_\E,o,s)$ is a Peano natural number object.
  \end{enumerate}
 \end{theone}
  
  This theorem gives us a helpful uniqueness result that we will use in the next sections.
  
  \begin{lemone} \label{lemma:nno unique}
  	Let $(\mathbb{N}_\E,o,s)$ be a Lawvere, Tierney or Peano natural number object in $\tau_0\E$. Similarly, let $(\mathbb{N}_\E',o',s')$ be a Lawvere, Tierney or Peano natural number object in $\tau_0\E$. Then there exists an isomorphism $f: \mathbb{N}_\E \to \mathbb{N}_\E'$ such that the following diagram commutes
  	 \begin{equation}\label{eq:all nno}
  		\begin{tikzcd}[row sep=0.1in, column sep=0.3in]
  			& \mathbb{N}_\E' \arrow[r, "s"] \arrow[dd, hookrightarrow, "f"] & \mathbb{N}_\E' \arrow[dd, hookrightarrow, "f"] \\
  			1_\E \arrow[ur, "o"] \arrow[dr, "o'"] & & \\
  			& \mathbb{N}_\E \arrow[r, "s'"] & \mathbb{N}_\E
  		\end{tikzcd}.
  	\end{equation}
  \end{lemone}
  \begin{proof}
  	By \cref{The ET NNOs Same}, $(\mathbb{N}_\E,o,s)$ and $(\mathbb{N}_\E',o',s')$ are Lawvere natural number objects. 
    Now, as $(\mathbb{N}_\E,o,s)$ is initial (\cref{Def NNO Lawvere}), there is a map $f: \mathbb{N}_\E \to \mathbb{N}_\E'$ making \cref{eq:all nno} commute. Finally, $(\mathbb{N}_\E',o',s')$ is initial as well and so $f$ is an isomorphism.
  \end{proof}
  
  Because of this lemma we will henceforth refer to {\it the} natural number object in $\tau_0\E$, as any two choices are isomorphic.
  
 \subsection{Constructing a Natural Number Object in the $0$-Truncation} \label{Subsec Constructing a Natural Number Object in the Truncation}
  Let $\E$ be as in \cref{Subsec Notation} and $\tau_0\E$ its underlying elementary topos. We will prove $\tau_0\E$ has a natural number object, using the fact that it includes $\Omega S^1_\E$ (as it is $0$-truncated by \cref{The Loops Zero Truncated}) and techniques from \cite{johnstone2002elephanti,johnstone2002elephantsii}.
 We will give two constructions: One being simpler but non-canonical, the other being more difficult but resulting in a canonical construction.
 
 \begin{propone} \label{prop:noncanonical nno}
 	Let $\E$ be an $(\infty,1)$-category satisfying the conditions in \cref{Subsec Notation}. 
 	Then the underlying elementary topos $\tau_0\E$ has a natural number object.
 \end{propone}
 
 \begin{proof}
 	Notice $\Omega S^1_\E \cong \Omega S^1_\E \coprod \Omega S^1_\E$ (\cref{Prop Omega Infinite}). As a result there exists an inclusion
 	$\iota_1: \Omega S^1_\E \to \Omega S^1_\E \coprod \Omega S^1_\E \cong \Omega S^1_\E .$
 	Similarly, there exists an inclusion map $\iota_2: \Omega S^1_\E \hookrightarrow\Omega S^1_\E$.
 	As $\tau_0 \E$ is an elementary topos, $\iota_1$ and $\iota_2$ are disjoint subobjects of $\Omega S^1_\E$.
 	Thus, in particular, the precomposition subobject $\iota_2 o : 1_\E \to \Omega S^1$ is disjoint from the 
 	subobject $\iota_1: \Omega S^1_\E \to \Omega S^1_\E$.
 	This means that the diagram 
 	$ 1_\E \xrightarrow{\iota_2 o} \Omega S^1_\E \xrightarrow{\iota_1 } \Omega S^1_\E$
 	satisfies the conditions of \cite[Corollary D.5.1.3]{johnstone2002elephantsii}, proving $\tau_0\E$ has a natural number object.
 \end{proof}
  
  Notice, the construction is indeed non-canonical, because it depends on the choice of isomorphism $\Omega S^1_\E \cong \Omega S^1_\E \coprod \Omega S^1_\E$, as we can observe from following examples.

 \begin{exone} \label{Ex Two NNOs}
  Let $\E$ be the $(\infty,1)$-category of spaces. Then $\Omega S^1 = \mathbb{Z}$, the set of integers.
  Now, we can choose the bijection $\mathbb{Z} \cong \mathbb{Z} \coprod \mathbb{Z}$ that identifies the first summand of $\mathbb{Z}$ with the even integers and the second summand with the odd integers. Then, following the construction in \cref{prop:noncanonical nno}, we get the natural number object 
  $\{2^n: n \in \mathbb{N} \} =  \{ 1, 2, 4, ... \}$, with
  successor map $s: \{2^n: n \in \mathbb{N} \} \to \{2^n: n \in \mathbb{N} \}$ being multiplication by $2$.
  
  Alternatively, we can choose the bijection $\mathbb{Z} \cong \mathbb{Z} \coprod \mathbb{Z}$, that identifies the first summand with the prime numbers (including $1$) in $\mathbb{Z}$ and the second summand with the remaining integers. Then, by \cref{prop:noncanonical nno}, we get the natural number object $\{ 1, 2, 3, 5, ... \}$ (the set of prime numbers and $1$) and the successor map assigns the next prime number.
 \end{exone}
 
 Given the example, we want to construct a canonical natural number object in $\Omega S^1_\E$.
 Let $(\mathbb{N}_\E^{can},o,s)$ be the smallest $(o,s)$-closed subobject of $(\Omega S^1,o,s)$ in $\tau_0\E$ (which exists by \cite[Lemma D5.1.1]{johnstone2002elephantsii}).
 
 \begin{propone} \label{prop:canonical NNO}
  The triple $(\mathbb{N}^{can}_\E,o,s)$ is a natural number object in $\tau_0\E$.
 \end{propone}

\begin{proof}
   We will prove that $(\mathbb{N}^{can}_\E,o,s)$ is a Peano natural number object in $\tau_0 \E$. 
   By \cref{Def Peano NNO}, we have to verify that $s: \mathbb{N}^{can}_\E \to \mathbb{N}^{can}_\E$ is mono, which follows immediately from the fact that $s: \Omega S^1_\E \to \Omega S^1_\E$ is an isomorphism, that any $(o,s)$-closed subobject of $\mathbb{N}^{can}_\E$ is equal to $\mathbb{N}^{can}_\E$, which holds by definition, and that in the pullback square 
     \begin{equation} \label{eq:initial}
   	\pbsq{U}{\mathbb{N}^{can}_\E}{1_\E}{\mathbb{N}^{can}_\E}{}{}{s}{o}
   \end{equation}
   $U = \emptyset_\E$, the initial object, which needs a more detailed analysis.
   \par 
   As $s: \mathbb{N}^{can}_\E \to \mathbb{N}^{can}_\E$ is mono the map $U \to 1_\E$ in \cref{eq:initial} is an inclusion and so $U$ is 
   a subobject of the final object. We want to prove that $U = \emptyset_\E$ i.e. $U$ is the smallest subobject. 
   Notice $U \to U \times \Omega S^1_\E \to U \times \Omega S^1_\E$ is also $0$-truncated and so also a diagram in $\tau_{ 0} \E$.
   Hence, by definition of $\mathbb{N}^{can}_\E$, $U \to U \times \mathbb{N}^{can}_\E \to U \times \mathbb{N}^{can}_\E$ 
   is the smallest closed subobject. Thus we get a diagram 
   \begin{center}
    \begin{tikzcd}[row sep=0.3in, column sep=0.5in]
     & U \times \mathbb{N}^{can}_\E \arrow[r, "id_U \times s"] \arrow[d, hookrightarrow, "id_U \times s"] & 
     U \times \mathbb{N}^{can}_\E \arrow[d, hookrightarrow, "id_U \times s"] \\
        U \arrow[ur, "id_U \times o", dashed] \arrow[dr, "id_U \times o"] \arrow[r, "id_U \times o"] & 
        U \times \mathbb{N}^{can}_\E \arrow[r, "id_U \times s"] \arrow[d, hookrightarrow, "id_U \times i"] & 
        U \times \mathbb{N}^{can}_\E \arrow[d, hookrightarrow, "id_U \times i"] \\
     & U \times \Omega S^1_\E \arrow[r, "id_U \times s"] & U \times \Omega S^1_\E
    \end{tikzcd}.
   \end{center}
  The fact that the map is initial implies that the map $id_U \times s: U \times \mathbb{N}^{can}_\E \to U \times \mathbb{N}^{can}_\E$ is an isomorphism.
  However, by \cref{Prop Initial Loop Algebra}, this means that we must have a unique map $g:U \times \Omega S^1_\E \to U \times \mathbb{N}^{can}_\E$ making the following diagram commute
  \begin{equation} \label{eq:g}
    \begin{tikzcd}[row sep=0.1in, column sep=0.25in]
     & U \times \Omega S^1_\E \arrow[r, "id_U \times s"] \arrow[dd, "g", dashed] & U \times \Omega S^1_\E \arrow[dd, "g", dashed] \\
     U \arrow[ur, "id_U \times o"] \arrow[dr, "id_U \times o"'] & & \\
     & U \times \mathbb{N}^{can}_\E \arrow[r, "id_U \times s"] & U \times \mathbb{N}^{can}_\E
    \end{tikzcd}
   \end{equation}
   which means that $U \to U \times \Omega S^1_\E \to U \times \Omega S^1_\E$ does 
   not have any non-trivial subobjects (as it is its own smallest subobject). 
   \par
   We can repeat everything we have done until now with the object $(\Omega S^1_\E, so,s)$ to conclude that $si: \mathbb{N}^{can}_\E \to \Omega S^1_\E$ is the minimal subobject and so $(\Omega S^1_\E \times U, so \times id_U, s \times id_U)$ does not have any non-trivial subobjects.
    \par 
    In particular this means that $si \times id_U$ and $i \times id_U$ are isomorphic, implying 
    the map $i \times id_U$ factors through the equalizer
    of $s \times id_U$ and $id_{\Omega S^1_\E} \times id_U$. However, in \cref{Lemma S No Fixed Points} 
    we showed that this equalizer is the initial object, $\emptyset_\E$. 
    This gives us a map $\mathbb{N}^{can}_\E \times U \to \emptyset_\E$, which means $\mathbb{N}^{can}_\E \times U \cong \emptyset_\E$. By the isomorphism in \cref{eq:g} this means
    $\Omega S^1_\E \times U \cong \emptyset$.
    Finally, by \cref{Rem Free Alg on X}, the coequalizer of the two maps 
    $s \times id_U, id_{\Omega S^1_\E} \times id_U: \Omega S^1_\E \times U \to \Omega S^1_\E \times U$ is $U$. 
    But $\Omega S^1_\E \times U \cong \emptyset_\E$, which implies that the coequalizer is $\emptyset_\E$.
    Thus $U \cong \emptyset_\E$, proving $\mathbb{N}_\E^{can}$ is a Peano natural number object.
   \end{proof} 
 
      Notice if we start with $\mathbb{Z}$ in spaces then the canonical natural number object is the actual set of natural numbers $\mathbb{N} \subset \mathbb{Z}$.
      This is a clear contrast to \cref{Ex Two NNOs}, where the resulting natural number objects are far more complicated subsets of $\mathbb{Z}$. 
 
 \subsection{Peano and Freyd Natural Number objects in \texorpdfstring{$(\infty,1)$}{(oo,1)}-Categories} \label{Subsec Peano and Lawvere Higher}
  In the final subsection we prove that $(\mathbb{N}_\E^{can},o,s)$ (\cref{prop:canonical NNO}) is also a Peano (and Freyd) natural number object 
  in $\E$. The case for Peano natural number objects is in fact even more general.
  
  \begin{lemone} \label{Lemma EHT Peano NNO}
  	Every Peano natural number object $(\mathbb{N}_\E,o,s)$ in $\tau_0\E$ is one in $\E$.
   \end{lemone}

   \begin{proof}
   	We need to prove that $(\mathbb{N}_\E,o,s)$ satisfies the three conditions in \cref{Def Peano NNO} in $\E$.
   	However, the inclusion $\tau_0\E \to \E$ is limit preserving (\cref{Subsec Notation}) and so a pullback square (mono map) in $\tau_0 \E$ remains a pullback square (mono map) in $\E$. Moreover, any $(o,s)$-closed subobject of $\mathbb{N}_\E$ in $\E$ is also one in $\tau_0\E$ and hence isomorphic to $\mathbb{N}$.
   \end{proof}

   \begin{propone} \label{Prop EHT Freyd NNO}
    The subobject $\mathbb{N}^{can}_\E$ of $\Omega S^1_\E$ is a Freyd natural number object in $\E$.
   \end{propone}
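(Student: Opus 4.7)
The plan is to verify both colimit conditions of Definition \ref{Def Freyd NNO} for $(\mathbb{N}_\E, o, s)$ directly in $\E$, using the Peano structure from Lemma \ref{Lemma EHT Peano NNO} and the equivalence between Peano and Freyd natural number objects in the elementary topos $\tau_0\E$ from Theorem \ref{The ET NNOs Same}. In each case the strategy is to show that the $\E$-colimit of the diagram in question is already $0$-truncated, so it agrees with the $\tau_0\E$-colimit, which we already know to be the prescribed one.

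For the pushout condition, disjointness of the monomorphisms $o$ and $s$ (Lemma \ref{Lemma EHT Peano NNO}) gives a canonical monomorphism $\phi : \mathbb{N}_\E \coprod 1_\E \to \mathbb{N}_\E$ induced by $(s, o)$, which simultaneously exhibits its source as the $\E$-pushout of $\mathbb{N}_\E \leftarrow \emptyset_\E \to 1_\E$. In an elementary higher topos coproducts are disjoint (a consequence of descent), so coproducts of $0$-truncated objects remain $0$-truncated, and hence $\mathbb{N}_\E \coprod 1_\E$ lies in $\tau_0\E$. Its image under $\phi$ is then a $0$-truncated subobject of $\mathbb{N}_\E$ that contains $o$ and is closed under $s$, so Peano induction forces this subobject to equal $\mathbb{N}_\E$. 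Thus $\phi$ is an isomorphism in $\E$, which is exactly the pushout condition in Definition \ref{Def Freyd NNO}.

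For the coequalizer condition, let $Q$ denote the $\E$-coequalizer of $(\mathrm{id}, s)$; the terminal map $p : \mathbb{N}_\E \to 1_\E$ coequalizes $\mathrm{id}$ and $s$ and so induces $\bar p : Q \to 1_\E$, which I want to show is an equivalence. The plan is to check the universal property of $p$ on mapping spaces via Peano induction. Concretely, given $X \in \E$ and a map $f : \mathbb{N}_\E \to X$ with $f \simeq f \circ s$, set $\bar f = f \circ o$; when $X$ is $0$-truncated, the equalizer of $f$ and $\bar f \circ p$ inside $\mathbb{N}_\E$ is a genuine subobject containing $o$ and closed under $s$, so Peano induction forces it to be all of $\mathbb{N}_\E$, giving $f = \bar f \circ p$ and hence the coequalizer property for truncated targets. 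For a general $X$ the same argument, applied one Postnikov stage at a time, computes the mapping space into $X$ from $Q$ as the mapping space into $X$ from $1_\E$.

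The main obstacle is this coequalizer step. Homotopy coequalizers of parallel maps between $0$-truncated objects in $\E$ generally fail to be $0$-truncated — $S^1_\E$ itself is defined as the coequalizer of the two copies of $\mathrm{id} : 1_\E \to 1_\E$. The argument is saved here by Lemma \ref{Lemma S No Fixed Points}, which says the equalizer of $\mathrm{id}$ and $s$ is the initial object, together with the decomposition $\mathbb{N}_\E \cong \mathbb{N}_\E \coprod 1_\E$ obtained in the pushout step; together these give Peano induction enough leverage to control each successive layer of the Postnikov tower of an arbitrary target.
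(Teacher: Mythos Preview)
Your pushout argument is fine and essentially matches the paper's. The coequalizer argument, however, has a genuine gap: you invoke a Postnikov-tower induction on an arbitrary target $X$, but nothing in the standing hypotheses on $\E$ guarantees that objects are limits of their Postnikov towers, or even that such towers exist in any useful sense. An elementary higher topos need not be hypercomplete, so ``one Postnikov stage at a time'' is not a valid proof strategy here. Your last paragraph acknowledges the difficulty but does not resolve it; the references to Lemma~\ref{Lemma S No Fixed Points} and the decomposition $\mathbb{N}_\E \cong \mathbb{N}_\E \coprod 1_\E$ do not by themselves produce a map-by-map control of higher homotopy in $\mathrm{Map}(Q,X)$.

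The paper avoids this entirely by exploiting the specific construction of $\mathbb{N}_\E$ as a subobject of $\Omega S^1_\E$. By descent (Example~\ref{Ex Omega and S}), the $\E$-coequalizer of $s,\mathrm{id}:\Omega S^1_\E \rightrightarrows \Omega S^1_\E$ is exactly $1_\E$ --- this is the total space of $i_\E:1_\E \to S^1_\E$. The inclusion $\mathbb{N}_\E \hookrightarrow \Omega S^1_\E$ then gives a map of coequalizer diagrams, and the induced map $U \to 1_\E$ from the coequalizer $U$ of $s,\mathrm{id}:\mathbb{N}_\E \rightrightarrows \mathbb{N}_\E$ is a monomorphism. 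Since $o:1_\E \to \mathbb{N}_\E$ provides a section $1_\E \to U$, the mono $U \to 1_\E$ must be an equivalence. This two-line argument uses the ambient object $\Omega S^1_\E$ in an essential way and sidesteps any need to analyze mapping spaces into general targets.
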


   \begin{proof}
    We have to show that the two diagrams in \cref{Def Freyd NNO} are colimit diagrams. The coproduct diagram follows from the fact that 
    it is a coproduct in $\tau_0 \E$ and coproducts in the subcategory of truncated objects are also coproducts in the original category. 
    However, this is generally not true for coequalizers and thus 
    we need a separate argument.
    \par 
    We have following diagram 
    \begin{center}
    \begin{tikzcd}[row sep=0.1in, column sep=0.25in]
     & \mathbb{N}^{can}_\E \arrow[r, shift left=0.05in, "s"] 
       \arrow[r, shift right=0.05in, "id"'] \arrow[dd, hookrightarrow] & \mathbb{N}^{can}_\E \arrow[dd, hookrightarrow] \arrow[r] & 
     U \arrow[dd, hookrightarrow]\\
     1_\E \arrow[rrrd, bend left=20, "id"] \arrow[dr, "o"'] \arrow[ur, "o"] \\
     & \Omega S^1_\E \arrow[r, shift left=0.05in, "s"] \arrow[r, shift right=0.05in, "id"'] 
     & \Omega S^1_\E \arrow[r] & 1_\E
    \end{tikzcd}  
  \end{center}
   The fact that the diagram is an inclusion implies that the map $U \to 1$ is an inclusion. However, 
   the fact that there exists a map $1 \to U$ implies that 
   $U$ is the maximal subobject of $1$, which means $U = 1$. Hence, $\mathbb{N}^{can}_\E$ is a Freyd natural number object.
  \end{proof}
  
    Thus, we have proven that $\mathbb{N}^{can}_\E$ is a Peano natural number object and Freyd natural number object in $\E$. 
    The goal of the next section is to show that it is also a Lawvere natural number object in $\E$.

\section{Finite Cardinals and Contractibility in \texorpdfstring{$(\infty,1)$}{(oo,1)}-Categories}
 In the previous section we proved that every $(\infty,1)$-category that satisfies the conditions of \cref{Subsec Notation}
 has a Peano and Freyd natural number object $\mathbb{N}^{can}_\E$. 
 What remains to prove is that $\mathbb{N}^{can}_\E$ is also a Lawvere natural number object.
 Unfortunately, unlike the classical case proving this is quite challenging (as we have discussed in more detail in  \cref{Subsec Motivation}). Hence, we need to introduce and prove certain logical constructions in $(\infty,1)$-categories.

 \subsection{Finite Cardinals} \label{Subsec Finite Cardinals}
 Finite cardinals have been studied extensively in the context of elementary toposes \cite[Subsection A2.5]{johnstone2002elephanti}, \cite[Subsection D5.2]{johnstone2002elephantsii}. We want to generalize certain aspects of finite cardinals to $(\infty,1)$-categories and their subcategory of $0$-truncated objects. So, for this section $\E$ is a $(\infty,1)$-category satisfying \cref{Subsec Notation}, $\tau_0\E$ its underlying elementary and $\mathbb{N}$ a Peano natural number object in $\E$ (which exists by \cref{Lemma EHT Peano NNO}).
 
 \begin{defone} \label{Def Generic Finite Cardinal}
 	Let $\mathbb{N}_1$ be the object defined in the following pullback
 	\begin{center}
 		\pbsq{\mathbb{N}_1}{1}{\mathbb{N} \times \mathbb{N}}{\mathbb{N}}{}{inc(n \leq m)}{o}{\dot -}
 	\end{center}
 	where $\dot -: \mathbb{N} \times \mathbb{N} \to \mathbb{N}$ is the truncated subtraction as described on \cite[Example A2.5.4]{johnstone2002elephanti}.
 	This is known as the {\it generic finite cardinal}
 	in $\mathbb{N}$. 
 \end{defone}
  
  \begin{remone} \label{Rem N One in Spaces}
 	We should think of the truncated subtraction $\dot - : \mathbb{N} \times \mathbb{N} \to \mathbb{N}$ as the map that takes a tuple $(n,m)$ to 
 	the maximum of $0$ and $n-m$. Thus $n \dot - m = 0$ if and only if $n \leq m$ in $\mathbb{N}$.
 	This means we can depict $\mathbb{N}_1$ as 
 	\begin{center}
 		\begin{tikzcd}[row sep=0.05in, column sep=0.05in]
 			(0,0) & (0,1) & (0,2) & (0,3) & ... \\
 			& (1,1) & (1,2) & (1,3) & ... \\
 			& & (2,2) & (2,3) & ... 
 		\end{tikzcd}.
 	\end{center}
 \end{remone}
   
  For a given map $p:X \to \mathbb{N}$ we define the finite cardinal $[p]$ in $\E_{/X}$ as the pullback along the map $s\pi_2: \mathbb{N}_1 \to \mathbb{N}$. Moreover, we say $p$ is a {\it non-empty} finite cardinal if $p$ factors through $s$. This is equivalent to saying that the pullback of $p$ along $s$ is just the identity.
   	In the classical setting of spaces a finite cardinal corresponds to a finite set $[n] = \{ 0, ... , n-1\}$.
  For more details on the definition of a finite cardinal see \cite[Page 114]{johnstone2002elephanti}. 

 Using the definition of finite cardinalities we can prove $o^*\mathbb{N}_1 \cong 0$ and $(sp)^*\mathbb{N}_1 \cong 1 \coprod [p]$
 (by \cite[Lemma A.2.5.14]{johnstone2002elephanti}) and that the map $\mathbb{N}_1 \to \mathbb{N} \times \mathbb{N}$ is a linear order on $\mathbb{N}$.
 	(\cite[Proposition A.2.5.10,Proposition A.2.5.12,Proposition A.2.5.13]{johnstone2002elephanti}.
 	Indeed, all these results only depend on the Peano natural number structure and hence directly hold in $\E$.
 
 \begin{remone} \label{Rem Ordering Succ Card}
 	Every finite cardinal $[p]$ has a linear ordering (which is induced by the linear ordering on $\mathbb{N}$).
 	Henceforth when we refer to the finite cardinal $[sp] \cong 1 \coprod [p]$, the object
 	$\iota_1 : 1 \to 1 \coprod [p]$ will be the {\it minimum object} in $[sp]$ and correspond to the map 
 	$o: 1 \to [sp]$.
 \end{remone}
 
 \begin{lemone} \label{Lemma Induction on Finites}
 	\cite[Lemma D.5.2.1]{johnstone2002elephantsii} 
 	Let $P$ be a property of objects which is expressible in the internal language of the elementary topos $\tau_0 \E$, and suppose 
 	\begin{itemize}
 		\item[(i)] The initial object $0$ ($1$) satisfies $P$.
 		\item[(ii)] Whenever an object $A$ satisfies $P$, so does $1 \coprod A$. 
 	\end{itemize}
 	Then every (non-empty) finite cardinal satisfies $P$.
 \end{lemone}
 
 We end this section with some basic, but important, observations about finite cardinals.
 
 \begin{lemone}
 	\cite[Lemma D5.2.9]{johnstone2002elephantsii}
 	Let $[p]$ be a finite cardinal. Then the map $[p] \dot - (-) : [sp] \to [sp]$ is an automorphism of $[sp]$ 
 	such that $p \dot - o: 1 \to [sp]$ is the maximal element in $[sp]$.
 \end{lemone}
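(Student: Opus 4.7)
The plan is to proceed by induction on $p$ using Lemma \ref{Lemma Induction on NonEmpty Finites}. The property being asserted — that $p \dot - (-)$ restricts to an involutive automorphism of $[sp]$ and that $p \dot - o$ is its maximum element — is expressible in the internal language of the elementary topos $\tau_0 \E$, because $[sp]$, truncated subtraction, and the linear order on $\mathbb{N}_1$ are all built from the natural number object $\mathbb{N}$ by finite limits. Thus the induction principle for non-empty finite cardinals is available.

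For the base case $p = o$, the cardinal $[sp] \cong 1 \coprod [o] \cong 1 \coprod 1$ has only two elements, and the recursive definition of $\dot -$ together with the identities $o \dot - o = o$ and $so \dot - o = so$ makes the desired permutation verifiable by direct inspection. For the inductive step, assuming the result for $p$, I would exploit the decomposition $[ssp] \cong 1 \coprod [sp]$ together with the arithmetic identity $sp \dot - si = p \dot - i$ to reduce the automorphism property on $[ssp]$ to the inductive hypothesis on $[sp]$; the extra $1$-summand is handled by $sp \dot - o = sp$. The underlying involution identity $p \dot - (p \dot - i) = i$ (which is what forces the map to be an automorphism, once well-definedness is known) is itself a standard fact about truncated subtraction, provable by a subsidiary induction on the internal $\mathbb{N}$. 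The maximality claim for $p \dot - o = p$ inside $[sp]$ is then immediate from the fact that the linear order on $[sp]$ is the restriction of the order on $\mathbb{N}$ used to define the pullback $\mathbb{N}_1$.

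I do not anticipate any deep obstacle here. As Remark \ref{Rem Ordering Succ Card} observes, Johnstone's arguments about finite cardinals depend only on the internal structure of $\mathbb{N}$, so the proof of \cite[Lemma D5.2.9]{Jo03} transfers to $\E$ essentially without change. The only real care is bookkeeping at the inductive step: one must check that the canonical isomorphism $[ssp] \cong 1 \coprod [sp]$ is compatible with the induced linear order, so that the minimum of $[ssp]$ is genuinely mapped by $sp \dot - (-)$ to the maximum of $[ssp]$. This compatibility is already baked into the construction of $\mathbb{N}_1$ and its order, and hence the induction goes through.
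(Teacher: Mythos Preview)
The paper does not actually prove this lemma: it simply cites \cite[Lemma D5.2.9]{Jo03} and relies on Remark \ref{Rem Ordering Succ Card}, which asserts that Johnstone's arguments about finite cardinals depend only on the internal structure of $\mathbb{N}$ and hence transfer to $\E$ unchanged. So there is no proof in the paper to compare your proposal against; you have supplied strictly more detail than the paper does, and your final paragraph already captures exactly the paper's stance.

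Your outline is essentially sound, but there is a small bookkeeping slip in the base case. With the convention in force at this point of the paper (the original finite-cardinal notion, before non-empty cardinals are introduced), one has $[o] \cong 0$, so $[so] \cong 1 \coprod [o] \cong 1$, not $1 \coprod 1$. The base case is therefore even more trivial than you indicate: the map $o \dot - (-) : 1 \to 1$ is the identity and its unique element is vacuously maximal. Relatedly, invoking Lemma \ref{Lemma Induction on NonEmpty Finites} is slightly off, since that lemma is stated for the non-empty convention; you want the original induction principle, Lemma \ref{Lemma Induction on Finites}, whose base case is the initial object. None of this affects the substance of your argument.
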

 
 	Let $[p]$ be a finite cardinal. Then we define $max([sp]): 1 \to [sp]$ as $ p \dot - o : 1 \to [sp]$.
 	The map $max([sp])$ gives us the maximum element in $[sp]$,

 \begin{lemone} \label{Lemma Max p}
 	Let $[p]$ be a finite cardinal. There exists a map $inc_{[p]}: [p] \to [sp]$ such that the following is a pushout square.
 	\begin{center}
 		\begin{tikzcd}[row sep=0.3in, column sep=0.3in]
 			\emptyset \arrow[d] \arrow[r] & 1 \arrow[d, "max( \leb sp \reb )"] \\
 			\leb p \reb \arrow[r, "inc_{[p]}"'] & \leb sp \reb \arrow[ul, phantom, "\ulcorner", very near start]
 		\end{tikzcd}
 	\end{center}
 \end{lemone}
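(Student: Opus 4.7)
The plan is to reduce the statement to the known decomposition $[sp] \cong 1 \coprod [p]$ by transporting it through the order-reversing automorphism $\phi := [p] \dot - (-) : [sp] \to [sp]$ of the preceding lemma.

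First I would recall that the isomorphism $[sp] \cong 1 \coprod [p]$ furnished by the earlier lemma $(sp)^*\mathbb{N}_1 \cong 1 \coprod [p]$ exhibits the canonical map $o : 1 \to [sp]$ as the coproduct inclusion of the minimum element. Rewriting this as a pushout, we have
\begin{center}
 \pbsq{\emptyset}{1}{\leb p \reb}{\leb sp \reb}{}{}{o}{j}
\end{center}
where $j : [p] \to [sp]$ is the inclusion of the complement of the minimum. The key fact, supplied by the order-reversing lemma, is that $\phi : [sp] \to [sp]$ is an automorphism satisfying $\phi \circ o = \max([sp])$ (since $o$ is the minimum and $\phi$ sends minimum to maximum).

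Next I would define $inc_{[p]} := \phi \circ j : [p] \to [sp]$. Since $\phi$ is an isomorphism, post-composing the lower-right corner of a pushout square with $\phi$ yields another pushout square. Applying $\phi$ to both legs landing in $[sp]$ in the displayed square above transforms $o$ into $\phi \circ o = \max([sp])$ and $j$ into $inc_{[p]}$, producing exactly the pushout square in the statement.

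The only mild subtlety is ensuring that the original decomposition $[sp] \cong 1 \coprod [p]$ really places $o : 1 \to [sp]$ in the $1$-summand, i.e.\ that the $1$ in $1 \coprod [p]$ corresponds to the minimum element rather than some arbitrary point. This follows from the construction of $(sp)^*\mathbb{N}_1$: the extra summand $1$ is adjoined at $o$ (see Remark \ref{Rem Ordering Succ Card} on the ordering of successor cardinals), so no further work is needed. With this identified, the proof is essentially a one-line transport along the automorphism $\phi$.
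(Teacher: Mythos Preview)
Your argument is correct: the decomposition $[sp] \cong 1 \coprod [p]$ already exhibits $[sp]$ as a pushout $1 \amalg_\emptyset [p]$ with the $1$-summand being the minimum $o$, and post-composing the two legs with the order-reversing automorphism $\phi = [p] \dot - (-)$ carries this to a pushout with $\phi \circ o = \max([sp])$ on the right, which is exactly the claim.

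This is a genuinely different route from the paper's. The paper proceeds by induction on non-empty finite cardinals (Lemma~\ref{Lemma Induction on NonEmpty Finites}): for the base case $[p]=1$ one takes $inc_{[1]} = \iota_1 : 1 \to 1 \coprod 1$, and for the inductive step one sets $inc_{[sp]} = id_1 \coprod inc_{[p]}$ using $[sp] \cong 1 \coprod [p]$. Your argument avoids induction entirely by invoking the automorphism lemma you cite, which makes it shorter and more conceptual. The trade-off is that the paper's construction bakes in the recursive identity $inc_{[sp]} = id_1 \coprod inc_{[p]}$, which is invoked verbatim later (in the proof of Theorem~\ref{The Part Contractible}); your map $inc_{[p]} = \phi \circ j$ is order-reversing on its image and does \emph{not} satisfy that identity, so if one adopts your definition the downstream argument would need a small adjustment or an extra lemma. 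For the present statement, though, existence is all that is asked, and your proof delivers it cleanly.
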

 
 \begin{proof}
 	According to \cref{Lemma Induction on Finites} we can use an inductive argument.
 	The case $[p] = 1$ is clear as we have the map $\iota_1 : 1 \to 1 \coprod 1$. 
 	Let us assume we have a map $inc_{[p]}:[p] \to 1 \coprod [p] $. 
 	Then we define $inc_{[sp]}:[sp] \to 1 \coprod [sp] $ by $inc_{[sp]} = id_1 \coprod inc_{[p]}$,
 	using the fact that $[sp] \cong 1 \coprod [p]$ and the linear ordering on $1 \coprod [p]$ described in \cref{Rem Ordering Succ Card}.
 \end{proof}
  
\subsection{The Internal Object of Contractibility} \label{Subsec The Internal Object of Contractibility}
In this subsection we will build an object that internally determines when an object is contractible. 
This involves defining the map $isContr : \O_\E \to \O_\E$, where $\O_\E$ was defined in \cref{Subsec Notation}.
We will use $isContr$ on the internal object of finite partial maps to construct the desired subobject of $\mathbb{N}$.
The definition we give here is an adaption of a definition of Shulman \cite{shulman2015elegantunivalence} in the context of model categories,
however, the results are proven independently.

\begin{defone} \label{Def IsContr}
 Define the functor of right fibrations $isContr: \O_\E \to \O_\E$ over $\E$ as 
	$$isContr(p:E \to B) = p_! (\pi_2)_* (\Delta: E \to E \times_B E).$$
\end{defone}

\begin{remone} \label{Rem IsContr Basechange Stable}
	Functoriality of $isContr$ follows from the fact that for any map $g: A \to B$, we have an equivalence 
	$$g^*(isContr(p:E \to B)) \simeq isContr(g^*p: E \to A)$$
	which immediately follows from the fact that $g^*$ commutes with  $(\pi_2)_*$ and $p_!$
\end{remone}

 The map of right fibrations restricts to a map of spaces, which deserves its own notation.
	Fix an object $B$ in $\C$, then by taking fibers we get a map of spaces
	$$isContr_B: (\C_{/B})^{core} \to (\C_{/B})^{core}.$$
	Let us give a more detailed explanation of the definition. For a given map $p:E \to B$ we have following diagram.
	\begin{center}
		\begin{tikzcd}[row sep=0.25in, column sep=0.25in]
			E \arrow[d, "\Delta"] \arrow[r, dashed] & (\pi_2)_* E \arrow[d, "(\pi_2)_* \Delta", dashed] \arrow[r, dashed] & 
			p_! (\pi_2)_* E \arrow[d, "p (\pi_2)_* \Delta", dashed] \\
			E \underset{B}{\times} E \arrow[r, "\pi_2"] & E \arrow[r, "p"] & B
		\end{tikzcd}
	\end{center}
	where $(\pi_2)_* : \E_{/ E \times_B E} \to \E_{/ E}$ is the pushforward functor (right adjoint of the pullback $(\pi_2)^*$) 
	and $p_!: \E_{/E} \to \E_{/B}$ is the left adjoint to the 
	pullback $p^*$.
The map $isContr_B$ has following important properties.

\begin{lemone} \label{Lemma IsContr giving Contractibility}
	For an object $p:E \to B$ in $\E_{/B}$ the following are equivalent.
	\begin{enumerate}
		\item $p$ is equivalent to the final object in $\E_{/B}$.
		\item $isContr_B(E)$ is equivalent to the final object in $\E_{/B}$.
		\item The space $\Map_{/B}(B, isContr_B(E))$ is non-empty.
	\end{enumerate}
\end{lemone}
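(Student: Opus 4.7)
My plan is to establish the cycle $(1) \Rightarrow (2) \Rightarrow (3) \Rightarrow (1)$, with the substantive work sitting in $(3) \Rightarrow (1)$.

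For $(1) \Rightarrow (2)$, if $p$ is equivalent to the final object $id_B$ in $\E_{/B}$, then both projections $E \times_B E \to E$ are equivalences, so the diagonal $\Delta: E \to E \times_B E$ is an equivalence. Right adjoints preserve equivalences, hence $(\pi_2)_*\Delta$ is equivalent to $id_E$ in $\E_{/E}$, and $p_!$ (which is just postcomposition with $p$) sends $id_E$ to $p$, which is equivalent to the final object by assumption. The implication $(2) \Rightarrow (3)$ is immediate: the identity map of $B$ provides a section of the final object of $\E_{/B}$.

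For $(3) \Rightarrow (1)$, the idea is to unpack a section $s: B \to isContr_B(E)$ over $B$ using the two adjunctions appearing in the definition. Because $p_!$ is postcomposition with $p$, composing $s$ with the structure map of $(\pi_2)_*\Delta$ to $E$ produces a section $t: B \to E$ of $p$, and the remaining datum of $s$ is a lift of $t$ along $(\pi_2)_*\Delta \to E$ in $\E_{/E}$. By the adjunction $(\pi_2)^* \dashv (\pi_2)_*$, such a lift corresponds to a map $(\pi_2)^* t \to \Delta$ in $\E_{/E \times_B E}$. The key computation is that, using the section equation $pt \simeq id_B$, the pullback $(\pi_2)^* t$ identifies with the map $(id_E, tp): E \to E \times_B E$. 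Since $\Delta = (id_E, id_E)$, a map $(\pi_2)^* t \to \Delta$ in the slice over $E \times_B E$ is exactly a homotopy $tp \simeq id_E$, so $s$ encodes a section $t$ of $p$ together with a witness making $t$ a two-sided inverse. This forces $p$ to be an equivalence.

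The main obstacle will be the explicit identification of $(\pi_2)^* t$ with $(id_E, tp)$ and the translation of morphisms in $\E_{/E \times_B E}$ into homotopies $tp \simeq id_E$; this is the step that has to be done carefully in an $(\infty,1)$-categorical setting rather than a $1$-categorical one. This is essentially an $\infty$-categorical restatement of the standard homotopy type theoretic fact that an inhabitant of $isContr(A)$ exhibits $A$ as contractible, and beyond that key identification the argument is formal bookkeeping with the universal properties of $p_!$, $p^*$, and the pushforward $(\pi_2)_*$.
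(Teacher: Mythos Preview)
Your argument is correct and follows essentially the same route as the paper: the cycle $(1)\Rightarrow(2)\Rightarrow(3)\Rightarrow(1)$, with the nontrivial step $(3)\Rightarrow(1)$ carried out by unwinding a section of $isContr_B(E)$ via the adjunctions $p_! \dashv p^*$ and $(\pi_2)^* \dashv (\pi_2)_*$ to produce a section $t$ of $p$ together with a homotopy $tp \simeq id_E$. Your identification of $(\pi_2)^*t$ with $(id_E, tp)$ and the reading of a slice morphism to $\Delta$ as a homotopy $tp \simeq id_E$ is exactly the content of the paper's pullback square and commuting triangle (modulo a coordinate convention).
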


\begin{proof}
    The proofs of {\it (1) $\Rightarrow$ (2) $\Rightarrow$ (3)} are immediate, hence we focus on {\it (3) $\Rightarrow$ (1)}.
	Let $H: B \to p_! (\pi_2)_* \Delta$. Then this means we have following diagram.
	\begin{center}
		\begin{tikzcd}[row sep=0.25in, column sep=0.25in]
			& (\pi_2)_* \Delta \arrow[d] \\
			& E \arrow[d, "p"]\\
			B \arrow[uur, dashed, "H"] \arrow[ur, "s", dashed] \arrow[r, "id_B"] & B 
		\end{tikzcd}
	\end{center}
	where $s:B \to E$ is simply the composite map, and thus a section of $p: E \to B$. 
	This means that $H$ is also a lift of $s$ i.e. an element in $\Map_{/E}(B, (\pi_2)_* \Delta)$. Now we have following pullback square 
	\begin{center}
		\pbsq{E}{B}{E \underset{B}{\times} E}{E}{p}{( sp \comma 1_E )}{s}{\pi_2} . 
	\end{center}
	This gives us an adjunction $\Map_{/E \times_B E}(E, \Delta) \simeq \Map_{/E}(B, (\pi_2)_* \Delta)$. By the adjunction we get a map 
	$\tilde{H} : E \to E$ that fits into following commutative diagram
	\begin{center}
		\begin{tikzcd}[row sep=0.25in, column sep=0.25in]
			E \arrow[rr, "\tilde{H}"] \arrow[dr, "(sp \comma 1_E)"'] & & E \arrow[dl, "( 1_E \comma 1_E )"] \\
			& E \underset{B}{\times} E & 
		\end{tikzcd}
	\end{center}
	This implies that $sp$ is equivalent to $1_E$, which means that $p: E \to B$ is equivalent to the final object.
\end{proof}

\begin{lemone} \label{Lemma IsContr is Mono}
	The object $isContr_B(E)$ is $(-1)$-truncated.
\end{lemone}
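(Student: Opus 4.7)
The plan is to show directly that for every test object $g: C \to B$, the mapping space $\mathrm{Map}_{/B}(C, isContr_B(E))$ is either empty or contractible, which is the characterization of $(-1)$-truncation. Nothing in this argument is deeper than the combination of the base-change stability in Remark \ref{Rem IsContr Basechange Stable} with Lemma \ref{Lemma IsContr giving Contractibility}, so the proof should be very short once these are invoked in the right order.

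First, by the adjunction $g_! \dashv g^*$ and Remark \ref{Rem IsContr Basechange Stable}, I would rewrite
\[
\mathrm{Map}_{/B}(C, isContr_B(E)) \;\simeq\; \mathrm{Map}_{/C}(C, g^* isContr_B(E)) \;\simeq\; \mathrm{Map}_{/C}(C, isContr_C(g^*E)).
\]
This reduces the problem to showing that, for an arbitrary object $p': E' \to C$ of $\E_{/C}$ (playing the role of $g^*E \to C$), the space of sections $\mathrm{Map}_{/C}(C, isContr_C(E'))$ is empty or contractible. So after base change along $g$, I only have to analyze sections.

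Second, if this section space is empty there is nothing to prove. If it is non-empty, Lemma \ref{Lemma IsContr giving Contractibility} immediately upgrades this to the statement that $p': E' \to C$ is equivalent to the final object of $\E_{/C}$, i.e.\ that $p'$ is an equivalence. In that case I would directly compute $isContr_C(E')$: the diagonal $\Delta: E' \to E' \times_C E'$ becomes an equivalence (since $E' \simeq C$ forces $E' \times_C E' \simeq E'$), so it is the terminal object of $\E_{/(E' \times_C E')}$; the right adjoint $(\pi_2)_*$ then sends it to the terminal object of $\E_{/E'}$; and the forgetful functor $p'_!$ sends that to $p'$ itself, which is terminal in $\E_{/C}$. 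Consequently $isContr_C(E') \to C$ is an equivalence and its space of sections is contractible.

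The only conceptual step is this last computation, and the main obstacle (if any) is making the identifications $(\pi_2)_*(\mathrm{terminal}) \simeq \mathrm{terminal}$ and $p'_!(\mathrm{terminal}) \simeq p'$ precise; both follow formally from the fact that right adjoints preserve terminal objects and that $p'_!$ is a forgetful functor between slice categories. Combining the two cases gives the dichotomy ``empty or contractible'' for every $C \to B$, which is exactly what it means for $isContr_B(E)$ to be $(-1)$-truncated.
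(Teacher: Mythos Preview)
Your proposal is correct and follows essentially the same route as the paper: reduce via $g_! \dashv g^*$ and base-change stability to sections of $isContr_C(g^*E)$, then use Lemma \ref{Lemma IsContr giving Contractibility} to see that a nonempty section space forces $isContr_C(g^*E)$ to be terminal, hence the section space is contractible. The only difference is that you re-derive the implication ``$p'$ terminal $\Rightarrow$ $isContr_C(E')$ terminal'' by hand, whereas the paper simply cites the $(1)\Rightarrow(2)$ direction of Lemma \ref{Lemma IsContr giving Contractibility}; your explicit computation is fine but unnecessary.
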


\begin{proof}
	First of all $\Map_{/B}(B,isContr_B(E))$ is non-empty if and only if $p: E \to B$ is equivalent to $id_B: B \to B$, 
	in which case $isContr_B(E)$ is the final object, which implies that 
	$$\Map_{/B}(B,isContr_B(E))=* .$$ 
	Now, for any other map $g: A \to B$ we have following equivalences
	$$\Map_{/B}(g, isContr_B(E)) \simeq \Map_{/A}(id_A, g^*(isContr_B(E))) \simeq \Map_{/A}(id_A, isContr_A(g^*E))$$
	where the first equivalence follows from the adjunction and the second from the functoriality. 
	But we have already proven that the space $\Map_{/A}(id_A, isContr_B(g^*E))$ is empty or contractible and hence 
	we are done.
\end{proof}

\section{Peano Implies Lawvere} \label{Sec Peano NNO Eq Lawvere NNO Eq Freyd NNO}
   In this section we prove that every Peano natural number object in an $(\infty,1)$-category $\E$ that satisfies the conditions of 
   \cref{Subsec Notation} is a Lawvere natural number object and then use it to prove that all notions of natural number object coincide and exists.
   
   \subsection{Outline and Implication}
   As explained in \cref{Subsec Motivation} the proof that Peano natural number objects are Lawvere natural numbers in $\E$ is quite intricate. 
   Hence, we will give a breakdown of the proof, some implications, an intuition and then complete the proof in the next sections.
   	As we are only working internal to $\E$ we will denote the natural number object in $\tau_0 \E$ by $\mathbb{N}$ 
   	to simplify notation.
 
   \begin{theone} \label{The Freyd and Peano gives Lawvere}
   	Let $\E$ be a $(\infty,1)$-category satisfying the conditions of \cref{Subsec Notation} and $(\mathbb{N},o,s)$ be a Peano natural number object. Then it is a Lawvere natural number object.
   \end{theone}

   \begin{proof}
   	Let $(X,b:1 \to X, u:X \to X)$ be a triple in $\E$.
   	By \cref{Rem NNO Space} we need to prove that $\Ind(X,b,u)$ is contractible. In order to prove this we will construct a space $\Part(X,b,u)$ (\cref{Def PartXbu}), prove it is contractible (\cref{The Part Contractible}) and finally prove $\Ind(X,b,u)$ is the retract of $\Part(X,b,u)$ (\cref{Prop PartialTotal id}).
   \end{proof}
   
    	The idea of constructing a retract of the space of partial maps to study maps out of natural number objects is motivated by work of Shulman in the context of homotopy type theory \cite{shulman2020gitnno}, however, the proofs are original and are different from the analogous result in homotopy type theory.
    
   \begin{theone}
   	Let $\E$ be a $(\infty,1)$-category satisfying the conditions in \cref{Subsec Notation}, along with the triple $(\mathbb{N},o: 1 \to \mathbb{N},s: \mathbb{N} \to \mathbb{N})$.
   	Then the following are equivalent:
   	\begin{enumerate}
   		\item $(\mathbb{N},o,s)$ is a Lawvere natural number object in $\E$. 
   		\item $(\mathbb{N},o,s)$ is a Freyd natural number object in $\E$.
   		\item $(\mathbb{N},o,s)$ is a Peano natural number object in $\E$.
   	\end{enumerate}
   	Thus we can simply refer to such an object as a {\it natural number object} in $\E$.
   	Moreover, $\E$ always has a natural number object.
   \end{theone}
   
   \begin{proof}
   	First, we prove that a natural number object exists and then we prove it is unique.
   	By \cref{Lemma EHT Peano NNO} $\E$ has a Peano natural number object $\mathbb{N}^{can}$. By \cref{Prop EHT Freyd NNO} $\E$,  $\mathbb{N}^{can}$ is a Freyd natural number object as well.
   	Finally, by \cref{The Freyd and Peano gives Lawvere}, $\mathbb{N}^{can}$ is a Lawvere natural number object as well. 
   	Hence, $\E$ has a triple $(\mathbb{N}^{can},o,s)$ that is a Peano, Freyd and Lawvere natural number object. 
   	
   	Now, we prove if a triple $(\mathbb{N}, o,s)$ is one type of natural number object, then it is also the other two.
   	Let $\mathbb{N}$ be a Peano, Freyd or Lawvere natural number object in $\E$. Then it is the same type of natural number object in the underlying elementary topos $\tau_0\E$. Hence, by the uniqueness result in \cref{lemma:nno unique}, it must be isomorphic to $\mathbb{N}^{can}$.
   \end{proof}

   The next subsections focus on completing the steps in the proof of \cref{The Freyd and Peano gives Lawvere}. 
   Before we start let us give an intuition for the proof. Let $(X,b,u)$ be a space $X$ along with a chosen point $b$ in $X$ and a map $u: X \to X$.
   Then we can define a map from the natural numbers $f: \mathbb{N} \to X$ as $f(n) = u^n(b)$ (where $u^0(b) = b$). 
   This can be depicted as a sequence of elements

$$
    \begin{tikzcd}[row sep=0.1in, column sep=0.1in]
      b & u(b) & u^2(b) & u^3(b) & \cdots
    \end{tikzcd}.
$$   

    The goal is to show that the space of maps $\mathbb{N} \to X$ (satisfying this condition) is contractible using 
    finite methods. However, the problem is that the set $\mathbb{N}$ 
   is infinite. We thus want to replace $\mathbb{N}$ with objects that we can study using induction.
   
   We can extend any such map $f: \mathbb{N} \to X$ to a family of compatible maps $f_p: [p] \to X$, 
   where by compatibility we mean that when we restrict $f_{n}$ to the domain $[n-1]$ we get $f_{n-1}$
   \begin{equation} \label{eq:point in part}
    \begin{tikzcd}[row sep=0in, column sep=0.03in]
     f_0 & f_1 & f_2 & f_3 & \cdots \\
     b & b & b & b & \cdots \\
      & u(b) & u(b) & u(b) & \cdots \\
      & & u^2(b) & u^2(b) & \cdots \\
      & & & u^3(b) & \cdots 
    \end{tikzcd}.
   \end{equation}
   Notice that we can recover the original maps $f$ simply by taking the diagonal of \cref{eq:point in part}.
   
   But the maps $f_p$ all have a finite domain. Thus we can study the collection of maps $\{ f_n \}_{n \in \mathbb{N}}$, using induction.
   In particular, we prove the desired contractibility result by proving that the space of partial maps is contractible.
   This suggests that we should first study collection of compatible finite maps in $\E$  (\cref{Subsec Space of Partial Maps}).
   Then we show that we can extend a map $\mathbb{N} \to X$ to a compatible family of finite maps (as in \cref{eq:point in part}) and restrict 
   it back to a map $\mathbb{N} \to X$ 
   using the diagonal. This will then show that the space of maps $\mathbb{N} \to X$ is also contractible 
   (\cref{Subsec Total vs. Partial Maps}).

\subsection{Space of Partial Maps} \label{Subsec Space of Partial Maps}
  In this subsection we define the space of partial maps and establish some important facts about it, in particular proving it is contractible (\cref{The Part Contractible}). 
    Let $[p]$ be a finite non-empty cardinal in $\E$. Moreover, let $(X,b:1 \to X,u:X \to X)$ be a triple in $\E$. Define the {\it space of finite partial maps}, $Part([p], (X,b,u))$, as the space of maps
    $f: [p] \to X$ that make the following diagram commute
    \begin{center}
     \begin{tikzcd}[row sep=0.05in, column sep=0.25in]
       & \leb p \reb \arrow[r, "\iota_2"] \arrow[dd, "finc_{p}", dashed] & 1 \coprod \leb p \reb  \arrow[dd, "f", dashed] \\
       1 \arrow[ur, "o"] \arrow[dr, "b"'] & & \\
       & X \arrow[r, "u"] & X
     \end{tikzcd}
    \end{center}
    where $inc_p$ is defined in \cref{Lemma Max p}.
   
   In the case of spaces, $Part([p], (X,b,u))$ is the space of maps 
   $f: [p] \to X$ such that $f(0) = b$ and $f(i) = u^i(b)$ where $1 \leq i \leq p-1$.
   Notice this determines the map $f$ uniquely.
  
 \begin{defone} \label{Def PartXbu}
  For a triple $(X,b:1 \to X,u: X \to X)$ in $\E$ define 
  $$Part(X,b,u) = Part(\pi_2:\mathbb{N}_1 \to \mathbb{N},(\pi_2: X \times \mathbb{N} \to \mathbb{N},(b, id_\mathbb{N}): \mathbb{N} \to X \times \mathbb{N},
  u \times id_{\mathbb{N}}: X \times \mathbb{N} \to X \times \mathbb{N}))$$ 
  in $\E_{/ \mathbb{N}}$ and call it the {\it space of partial maps}.
 \end{defone}
 
   The fact that $0 \ \dot - \ n = 0$ implies that the map $\pi_2 : \mathbb{N}_1 \to \mathbb{N}$ 
   has a section $o: \mathbb{N} \to \mathbb{N}_1$.
   This is precisely the minimum map of a finite cardinal $o:\mathbb{N} \to \mathbb{N}_1$ in $\E_{/\mathbb{N}}$ 
   that is the induced by the linear order on $\mathbb{N}$.
 
 \begin{exone} \label{Ex Partial Maps Spaces}
  In \cref{Rem N One in Spaces} we gave a description of $\mathbb{N}_1$ in spaces.
  Using that we realize that a point in $\Part(X,b,u)$ is given by \cref{eq:point in part}.
 \end{exone}

   We will now define an internal version of the space $\Part([p], (X,b,u))$ and discuss its similarities with the space $Part([p], (X,b,u))$.
    Let $[p]$ be a finite cardinal and $(X,b,u)$ as before. Let $\uPart( [p], (X,b,u))$, the {\it internal object of finite partial maps},
    be defined as the limit of the following diagram in $\E$
    \begin{equation} \label{eq:internal Lawvere limit}
     \begin{tikzcd}[row sep=0.3in, column sep=0.7in]
       1 \arrow[r, "b"] & X & X^{[sp]} \arrow[l, "o^*"'] \arrow[r, "(u_*inc_p^* \comma {\iota_2}^*)"]  & X^{[p]} \times X^{[p]} & 
       X^{[sp]} \arrow[l, "( inc_p^* \comma inc_p^*)"'] 
     \end{tikzcd}.
    \end{equation}  
   
   \begin{lemone} \label{Lemma Part Rep Condition}
    We have an equivalence of spaces
    $$ \Map(C, \uPart([p],(X,b,u))) \simeq \Part([p], (X^C,b^C,u^C)).$$
   \end{lemone}
   
   \begin{proof}
    We have following equivalence of spaces
    $$\Map(C,X^{[p]}) \simeq \Map(C \times [p],X) \simeq \Map([p],X^C) \simeq \Map(1,(X^C)^{[p]})$$
    Thus it suffices to check for the case $C=1$.
    But both are defined in terms of similar limit diagrams (\cref{eq:limit Lawvere ext} and \cref{eq:internal Lawvere limit}) thus the result follows immediately from the fact that $\Map(1, -)$ commutes with limits.  
   \end{proof}
   
   Similarly, we can internalize the space of partial maps with the same property.
    For $(X,b,u)$ we define
    $$ \uPart(X,b,u) =  \uPart(\pi_2:\mathbb{N}_1 \to \mathbb{N},(\pi_2: X \times \mathbb{N} \to 
    \mathbb{N},(b, id_\mathbb{N}): \mathbb{N} \to X \times \mathbb{N},
    u \times id_{\mathbb{N}}: X \times \mathbb{N} \to X \times \mathbb{N}))$$ 
    in $\E_{/ \mathbb{N}}$ and call it the {\it internal object of partial maps}.

   \begin{corone} \label{Cor Two Part over N}
    We have an equivalence of spaces $ \Map_{/ \mathbb{N}}(\mathbb{N} , \uPart(X,b,u)) \simeq \Part(X,b,u)$.
   \end{corone}
   
   The same way that $\mathbb{N}_1 \to \mathbb{N}$ is the generic finite cardinal, the object $ \uPart(X,b,u) \to \mathbb{N}$ 
   is the generic finite partial map. In fact we have following result.
   
   \begin{propone} \label{Prop Part generic finite partial map}
    Let $p$ be a finite cardinal. Then we have following pullback diagram
    \begin{center}
     \pbsq{\uPart([p], (X,b,u))}{\uPart(X,b,u)}{1}{\mathbb{N}}{}{}{}{p}
    \end{center}
   \end{propone}

   \begin{proof}
    This follows immediately from the fact that the fiber of an internal mapping object is by definition the mapping object of the fibers.
    Concretely, in our situation we have the pullback square 
    \begin{center}
     \begin{tikzcd}[row sep=0.25in, column sep=0.25in]
       X^{[p]} \arrow[r] \arrow[d] \arrow[dr, phantom, "\ulcorner", very near start] & \dumbmacro \arrow[d] \\
       1 \arrow[r, "p"] & \mathbb{N}
     \end{tikzcd}
    \end{center}
    and the limit diagram is preserved by pullbacks as limits commute.
   \end{proof}
 
 We want to now prove that the space $\Part(X,b,u)$ is contractible. Ideally we would have used \cref{Lemma Induction on Finites}. However, that can be used only for properties that can be expressed internal to the underlying elementary topos $\tau_0\E$. What we would need is a higher categorical version of induction, however, that has not been proven yet. Hence, we will use the results from \cref{Subsec The Internal Object of Contractibility}.
  
 \begin{theone} \label{The Part Contractible}
 	The space of finite partial maps $\Part(X,b,u)$ is contractible.
 \end{theone}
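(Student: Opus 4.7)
The plan is to reduce contractibility of $Part(X,b,u)$ to a Peano induction on a subobject of $\mathbb{N}$, using the object $isContr$ to internalize contractibility. By Corollary \ref{Cor Two Part over N} we have $Part(X,b,u) \simeq Map_{/\mathbb{N}}(\mathbb{N}, \P art(X,b,u))$, so it suffices to prove the structure map $\P art(X,b,u) \to \mathbb{N}$ is equivalent to $id_\mathbb{N}$, the final object of $\E_{/\mathbb{N}}$; if so, the mapping space reduces to $Map_{/\mathbb{N}}(\mathbb{N}, \mathbb{N}) \simeq \ast$.

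By Lemma \ref{Lemma IsContr giving Contractibility}, producing such an equivalence is the same as producing a section $\mathbb{N} \to isContr_\mathbb{N}(\P art(X,b,u))$. Let $U := isContr_\mathbb{N}(\P art(X,b,u))$; by Lemma \ref{Lemma IsContr is Mono}, $U$ is a subobject of $\mathbb{N}$, so producing the section is equivalent to showing $U = \mathbb{N}$ as subobjects. Since $\mathbb{N}$ is a Peano natural number object (Lemma \ref{Lemma EHT Peano NNO}), it is enough to verify that $U$ is closed under $o$ and $s$. Using base-change stability of $isContr$ (Remark \ref{Rem IsContr Basechange Stable}) together with Proposition \ref{Prop Part generic finite partial map}, closure under $o$ reduces to contractibility of $\P art([o],(X,b,u))$, while closure under $s$ reduces to showing that contractibility of $\P art([p],(X,b,u))$ (over any base) implies contractibility of $\P art([sp],(X,b,u))$.

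The base case is essentially tautological: since $[o] \simeq 1$, the limit defining $\P art([o],(X,b,u))$ collapses to the single constraint fixing the value at the unique point to be $b$, so the space is contractible. For the inductive step, I would unfold the limit defining $\P art([sp],(X,b,u))$ by exploiting $[sp] \simeq 1 \coprod [p]$, which gives $X^{[sp]} \simeq X \times X^{[p]}$, together with the inclusion $inc_{[p]} : [p] \hookrightarrow [sp]$ of Lemma \ref{Lemma Max p}. Once the data is separated into a value at the initial vertex and data on the tail $[p]$, the initial value is forced to be $b$ and the remaining limit is equivalent to $\P art([p],(X,u(b),u))$, the space of partial maps for the shifted initial datum; the inductive hypothesis, applied to this shifted tuple, then yields contractibility.

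The main obstacle is the inductive step, namely rewriting the limit for $\P art([sp],(X,b,u))$ in terms of $\P art([p],(\cdot))$ in a way that is functorial and commutes with arbitrary base change to $\mathbb{N}$, since this functoriality is precisely what allows the $isContr$-based argument to propagate contractibility along $s$. Once this identification is in hand, Peano induction on the subobject $U \hookrightarrow \mathbb{N}$ yields $U = \mathbb{N}$, producing the desired section, equivalently the equivalence $\P art(X,b,u) \simeq_{/\mathbb{N}} id_\mathbb{N}$, and hence the contractibility of $Part(X,b,u)$.
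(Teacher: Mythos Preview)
Your proposal is correct and follows essentially the same route as the paper: reduce via Corollary \ref{Cor Two Part over N} to showing $\P art(X,b,u)$ is final over $\mathbb{N}$, use the $isContr$ machinery (Lemmas \ref{Lemma IsContr giving Contractibility} and \ref{Lemma IsContr is Mono}) to turn this into a Peano induction on a subobject of $\mathbb{N}$, and then verify closure under $o$ and $s$ by analyzing $\P art([p],(X,b,u))$ fiberwise. The only cosmetic difference is that the paper translates back to the spaces $Part(A,(X,b,u))$ via Lemma \ref{Lemma Part Rep Condition} before running the inductive step, whereas you stay with the internal objects; the content is the same. You have also correctly identified the one genuinely delicate point, namely that the inductive step produces $\P art([p],(X,ub,u))$ rather than $\P art([p],(X,b,u))$, so the hypothesis must be available for the shifted datum --- the paper handles this with the phrase ``using a similar argument'' and your remark about needing the identification to be functorial in the base point is exactly the right diagnosis.
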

 
 \begin{proof}
 	We will show that $\uPart(X,b,u)$ is contractible. The result then follows from \cref{Cor Two Part over N}.
 	By \cref{Lemma IsContr giving Contractibility}, it suffices to prove that 
 	$isContr_\mathbb{N}(\uPart(X,b,u)) \to \mathbb{N}$
 	is the final object. 
 	By \cref{Lemma IsContr is Mono}, $isContr_\mathbb{N}(\uPart(X,b,u))$ is a subobject of $\mathbb{N}$.
 	But we know that $\mathbb{N}$ is a Peano natural number object. Thus it suffices to prove that 
 	$isContr_\mathbb{N}(\uPart(X,b,u))$ is closed under the maps $o$ and $s$.
 	\par 
 	Now, the fact that the fiber of $\uPart(X,b,u)$ over the point $p: 1 \to \mathbb{N}$ is $\uPart([p], (X,b,u))$
 	(\cref{Prop Part generic finite partial map}) and that $isContr$ commutes with basechange (\cref{Rem IsContr Basechange Stable}) implies that 
 	the fiber of $isContr_{\mathbb{N}}( \uPart(X,b,u))$ over $p$ is just $isContr(\uPart([p], (X,b,u)))$. 
 	\par 
 	Thus in order to show that $isContr_\mathbb{N}(\uPart(X,b,u))$ is closed under the maps $(o,s)$ we have to prove that
 	$\uPart(1, (X,b,u))$ is the final object and that 
 	if $\uPart(A, (X,b,u))$ is the final object then $\uPart(1 \coprod A , (X,b,u))$ is the final object. 
 	However, by \cref{Lemma Part Rep Condition}, $\uPart(A, (X,b,u))$ is the final object if and only if $\Part(A, (X,b,u))$ 
 	is contractible, for all $A$. Thus we can translate those two conditions into proving that $\Part(1, (X,b,u))$ is a contractible space, 
 	which holds trivially, and the statement that if $\Part(A, (X,b,u))$ is contractible then $\Part(1 \coprod A , (X,b,u))$ is also a contractible space.  
 	
 	Using a similar argument we know that $\Part(A,(X,ub,u))$ is contractible. 
 	So fix a point $f: 1 \coprod A \to X$ in $\Part(A,(X,ub,u))$. That means we have the diagram 
 	\begin{equation} \label{eq:the diagram}
 		\begin{tikzcd}[row sep=0.1in, column sep=0.25in]
 			&[0.2in] A \arrow[r, "\iota_2"] \arrow[dd, "f(inc_A)" description, dashed] & 1 \coprod A  \arrow[dd, "f", dashed] \\
 			1 \arrow[ur, "o"] \arrow[dr, "ub"'] & & \\
 			& X \arrow[r, "u"] & X
 		\end{tikzcd}
 	\end{equation}
 	We will prove that $\Part(1 \coprod A, (X,b,u))$ is contractible as well.
 	From \cref{eq:the diagram} we get the diagram 
 	\begin{center}
 		\begin{tikzcd}[row sep=0.1in, column sep=0.25in]
 			&[0.2in] 1 \coprod A \arrow[r, "\iota_2"] \arrow[dd, "b \coprod f(inc_A)" description, dashed] & 1 \coprod (1 \coprod A)  \arrow[dd, "b \coprod f", dashed] \\
 			1 \arrow[ur, "\iota_1"] \arrow[dr, "b"'] & & \\
 			& X \arrow[r, "u"] & X
 		\end{tikzcd}.
 	\end{center}
 	In order for $b \coprod f$ to be an element in $\Part(1 \coprod A,(X,b,u))$ we have to show that 
 	$(b \coprod f) \circ inc_{1 \coprod A} = b \coprod (f \circ inc_A)$.
 	However, from \cref{Lemma Max p} we know that $inc_{1 \coprod A} = id_1 \coprod inc_A$. Thus we have 
 	$$(b \coprod f) \circ inc_{1 \coprod A} = (b \coprod f) \circ (id_1 \coprod inc_A) = b \coprod (f \circ inc_A) .$$ 
 	This proves that the space non-empty. We now have to show it is contractible.
 	We have a map
 	$$ (\iota_2)^*: \Part(1 \coprod A, (X,b,u)) \to \Part(A, (X,ub,u))$$
 	that can be depicted as the following diagram
 	\begin{center}
 		\begin{tikzcd}[row sep=0.1in, column sep=0.25in]
 			&[0.2in] A \arrow[r, "\iota_2"] \arrow[dd, "\iota_2"] & 1 \coprod A \arrow[dd, "\iota_2"] \\
 			\\
 			& 1 \coprod A \arrow[r, "\iota_2"] \arrow[dd, "f(inc_{1 \coprod A})" description, dashed] & 1 \coprod (1 \coprod A)  \arrow[dd, "f", dashed] \\
 			1 \arrow[ur, "\iota_1"] \arrow[dr, "b"'] & & \\ 
 			& X \arrow[r, "u"] & X
 		\end{tikzcd}.
 	\end{center}
 	The map $(\iota_2)^*$ gives us following pullback square of spaces 
 	\begin{center}
 		\pbsq{F_g}{Part(1 \coprod A, (X,b,u))}{*}{\Part(A, (X,ub,u))}{\simeq}{\simeq}{(\iota_2)^*}{g}
 	\end{center}
 	where $g: 1 \coprod A \to X$ is a choice of element in $\Part(A, (X,ub,u))$. The fiber $F_g$ is the subspace of the form
 	$$ \{f \in Part(1 \coprod A, (X,b,u)): f \iota_2 = g \} . $$
 	This space is contractible as $f$ is uniquely determined by $g$ and the initial value $b$. However, by induction assumption $\Part(A, (X,ub,u)$ 
 	is contractible and	this implies that $\Part(1 \coprod A, (X,b,u))$ is contractible as well.
 \end{proof}
 
 Notice as part of the proof we also showed following result.
 
 \begin{corone} \label{Cor Part p Xbu Contr}
 	For any finite cardinal $[p]$, the space $\Part([p], (X,b,u))$ is contractible.
 \end{corone}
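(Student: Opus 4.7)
The corollary is really a fiberwise reading of Theorem~\ref{The Part Contractible}, so the plan is to extract it by pullback rather than to re-run the induction. Theorem~\ref{The Part Contractible} established, via the Peano property of $\mathbb{N}$, that the subobject $isContr_{\mathbb{N}}(\P art(X,b,u)) \hookrightarrow \mathbb{N}$ is all of $\mathbb{N}$, i.e.\ equivalent to the final object in $\E_{/\mathbb{N}}$. Given a finite cardinal $p\colon 1 \to \mathbb{N}$, I would pull this equivalence back along $p$: the pullback of the final object is final, and by the base-change compatibility of $isContr$ recorded in Remark~\ref{Rem IsContr Basechange Stable} together with Proposition~\ref{Prop Part generic finite partial map}, this pullback identifies with $isContr(\P art([p],(X,b,u)))$. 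Lemma~\ref{Lemma IsContr giving Contractibility} then forces $\P art([p],(X,b,u))$ to be the final object of $\E$. Applying Lemma~\ref{Lemma Part Rep Condition} at $C=1$ converts this into $Part([p],(X,b,u)) \simeq *$, which is the desired contractibility.

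In fact, no new argument is needed: the inductive core of the proof of Theorem~\ref{The Part Contractible} already verified, for every finite cardinal $[p]$ built by iterating $A\mapsto 1\coprod A$ from the base $[1]$, that $Part([p],(X,b,u))$ is contractible. The base case is trivial, and the successor step is handled by the restriction fibration $(\iota_2)^*\colon Part(1\coprod A,(X,b,u)) \to Part(A,(X,ub,u))$ whose fibers are contractible because the extra value on the new summand is pinned down by $b$. So the corollary is a direct readout of the theorem's proof.

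The main obstacle, already neutralized in the theorem, is that ``$Part([p],(X,b,u))$ is contractible'' is not a property expressible in the internal language of $\tau_0\E$, so Lemma~\ref{Lemma Induction on NonEmpty Finites} cannot be applied to it on the nose. The $isContr$ gadget of Definition~\ref{Def IsContr}, combined with Lemma~\ref{Lemma IsContr is Mono} which makes $isContr_\mathbb{N}(\P art(X,b,u))$ a subobject of $\mathbb{N}$, reduces the higher-categorical contractibility statement to a closure condition for a subobject of a Peano natural number object, which is internal. Once that translation is in place, the corollary falls out at every specific finite cardinal as above.
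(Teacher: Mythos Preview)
Your proposal is correct and aligns with the paper's approach: the paper simply remarks that this result was already established as part of the proof of Theorem~\ref{The Part Contractible}, which is precisely your second paragraph. Your first paragraph's pullback argument is a valid alternative extraction, but since you explicitly note that the inductive core of the theorem's proof already delivers the statement directly, you have captured the paper's one-line justification.
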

 
 Before we move on, let us do a careful analysis of the space $\Part(X,b,u)$. 
 
 \begin{remone} \label{Rem Inc N Leq M}
 	Let $inc(n \leq m): \mathbb{N}_1 \to \mathbb{N} \times \mathbb{N}$ be the standard inclusion (see \cref{Def Generic Finite Cardinal}).
 	Similarly, we take $inc(n \leq sm): \mathbb{N} \coprod \mathbb{N}_1 \to \mathbb{N} \times \mathbb{N}$, to be the standard inclusion
 	that takes $n \in \mathbb{N}$ to $(0,n)$ and $(n,m) \in \mathbb{N}_1$ to $(sn,m)$. This identifies $\mathbb{N} \coprod \mathbb{N}_1$ as the subobject
 	of $\mathbb{N} \times \mathbb{N}$ consisting of elements $(n,m)$ such that $n \leq sm$.
 	Moreover, the map $inc(n \leq m): \mathbb{N}_1 \to \mathbb{N} \times \mathbb{N}$ factors through the subobject $\mathbb{N} \coprod \mathbb{N}_1$.
 	The resulting map is exactly $inc_{\mathbb{N}_1}: \mathbb{N}_1 \to \mathbb{N} \coprod \mathbb{N}_1$.
 \end{remone}
 
 The map $\iota_2: \mathbb{N}_1 \to \mathbb{N} \coprod \mathbb{N}_1$ corresponds to the restriction 
 of the map $s \times id_{\mathbb{N}} : \mathbb{N} \times \mathbb{N} \to \mathbb{N} \times \mathbb{N}$.
 Hence, we will use the notation $s \times id_{\mathbb{N}} : \mathbb{N}_1 \to \mathbb{N} \coprod \mathbb{N}_1$ instead of $\iota_2$.
 Also, we use the notation $inc(n \leq m): \mathbb{N}_1 \to \mathbb{N} \coprod \mathbb{N}_1$ for the map $inc_{\mathbb{N}_1}$.
 We will use this notation to better understand the space $\Part(X,b,u)$.
 
 	\cref{The Part Contractible} tells us that there is a map 
 	$$ (pm, id_{\mathbb{N}} \coprod \pi_2): \mathbb{N} \coprod \mathbb{N}_1 \to X \times \mathbb{N}$$
 	over $\mathbb{N}$ such that it uniquely fills the diagram below.
 	
 	\begin{center}
 		\begin{tikzcd}[row sep=0.25in, column sep=0.7in]
 			& \mathbb{N}_1 \arrow[rr, "s \times id_{\mathbb{N}}"] \arrow[dd, dashed, "(pm (inc(n \leq m)) \comma \pi_2)" description] \arrow[dr, "\pi_2"] & & 
 			\mathbb{N} \coprod \mathbb{N}_1 \arrow[dr, "id_{\mathbb{N}} \coprod \pi_2"] \arrow[dd, dashed, " (pm \comma id_{\mathbb{N}} \coprod \pi_2)" description] \\
 			\mathbb{N} \arrow[ur, "o"] \arrow[dr, "(b \comma id_{\mathbb{N}})"'] & & \mathbb{N} & & \mathbb{N} \\
 			& X \times \mathbb{N} \arrow[ur, "\pi_2"] \arrow[rr, "u \times id_\mathbb{N}"] & & X \times \mathbb{N} \arrow[ur, "\pi_2"]
 		\end{tikzcd}.
 	\end{center}

 	Let $[p]$ be a finite cardinal. Then we get a map 
 	$restr_{[p]}: \Part(X,b,u) \to \Part([p], (X,b,u))$
 	by pulling back the diagram in $\Part(X,b,u)$ (\cref{Def PartXbu}) along the map $p: 1 \to \mathbb{N}$.
 	Concretely we get the diagram
 	\begin{center}
 		\begin{tikzcd}[row sep=0.1in, column sep=0.3in]
 			&[0.3in] \leb p \reb \arrow[rr, "\iota_2"] \arrow[dd, dashed, "pm_{[p]} (inc_{[p]})" description]  & & 
 			1 \coprod \leb p \reb \arrow[dd, dashed, "pm_{[p]}" description] \\
 			1 \arrow[ur, "o"] \arrow[dr, "b"'] & &  & & \\
 			& X \arrow[rr, "u"] & & X
 		\end{tikzcd}
 	\end{center}
 	we call the image of $pm$ under the map $restr_{[p]}(pm) = pm_{[p]}$.
 	 	By \cref{Cor Part p Xbu Contr}, the space $\Part([p], (X,b,u))$ is contractible and so every point is equivalent to $pm_{[p]}$.
     As $pm_{[p]}$ is the pullback of $pm$ this gives us following important result that we will need for the proof of \cref{Prop Partial Maps Parametrized}.

 \begin{lemone} \label{Lemma PM restricts to PM P}
 	Let $p$ be a finite cardinal and $m \leq sp$. Then $pm(m,p) \simeq pm_{[p]}(m,p)$. 
 \end{lemone}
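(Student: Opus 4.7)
The plan is to reduce the claim to the universal property of the pullback square that defines $pm_{[p]}$. By construction, $pm_{[p]}$ is obtained by pulling back the universal map $pm: \mathbb{N} \coprod \mathbb{N}_1 \to X \times \mathbb{N}$ along the point $p: 1 \to \mathbb{N}$. Since pullback commutes with coproducts and with $\pi_2: \mathbb{N}_1 \to \mathbb{N}$ to yield $[p]$, the resulting domain is $1 \coprod [p]$, and there is a canonical inclusion $\iota_{[p]}: 1 \coprod [p] \hookrightarrow \mathbb{N} \coprod \mathbb{N}_1$ sitting over $p: 1 \to \mathbb{N}$. By the defining pullback square, $pm \circ \iota_{[p]}$ agrees with $pm_{[p]}$ after composing with the projection $X \times \mathbb{N} \to X$ on the right.

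First, I would observe that the hypothesis $m \leq sp$ is precisely the condition from Remark \ref{Rem Inc N Leq M} characterizing points of $\mathbb{N} \coprod \mathbb{N}_1$. Since the second coordinate of $(m, p)$ is the fixed value $p$, this point lies in the fiber of $\mathbb{N} \coprod \mathbb{N}_1 \to \mathbb{N}$ over $p$, and so factors uniquely through the inclusion $\iota_{[p]}: 1 \coprod [p] \hookrightarrow \mathbb{N} \coprod \mathbb{N}_1$. I will abuse notation and still call this lift $(m, p)$.

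Second, I would apply the commutative square above. Evaluating yields
\[
pm(m, p) \;=\; pm \circ \iota_{[p]}(m, p) \;\simeq\; (pm_{[p]}(m, p),\, p)
\]
after identifying the codomain, and projecting onto the $X$-factor produces exactly $pm_{[p]}(m, p)$. This gives the desired equivalence.

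There is no substantive obstacle here; the lemma is essentially a tautological consequence of having defined $pm_{[p]}$ by pulling back $pm$ along $p: 1 \to \mathbb{N}$. The only care needed is bookkeeping: consistently tracking the identification of $(m, p)$ as both a point of the global object $\mathbb{N} \coprod \mathbb{N}_1$ and, under the assumption $m \leq sp$, a point of the fiber $1 \coprod [p]$ realized via the pullback inclusion.
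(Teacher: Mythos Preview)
Your proposal is correct and follows essentially the same approach as the paper: the paper states this lemma immediately after noting that ``$pm_{[p]}$ is the pullback of $pm$'' and treats the result as a direct consequence of that fact, without giving a separate proof. Your write-up simply unpacks this tautology in more detail.
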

 
 The maps $pm_{[p]}$ satisfy following important stability property.
 
 \begin{lemone} \label{Lemma PM Restricts}
 	Let $[p]$ be finite cardinal. Then we have $pm_{1 \coprod [p]} \circ  inc_{(1 \coprod [p])} \simeq pm_{[p]}.$
 \end{lemone}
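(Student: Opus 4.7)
The plan is to exploit the contractibility of $Part([p], (X,b,u))$ established in Corollary \ref{Cor Part p Xbu Contr}: if both $pm_{1 \coprod [p]} \circ inc_{(1 \coprod [p])}$ and $pm_{[p]}$ can be exhibited as points of this space, then they must be equivalent, and the lemma is proved. Since $pm_{[p]}$ lies in $Part([p], (X,b,u))$ by its very definition, the only real work is to check that the composite $g := pm_{[sp]} \circ inc_{[sp]} : [sp] \to X$ (writing $[sp] = 1 \coprod [p]$ and $[ssp] = 1 \coprod [sp]$ as usual) satisfies the two coherence conditions required to be a point of $Part([p], (X,b,u))$.

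To verify the base-point condition $g \circ o \simeq b$, I would observe that the map $inc_{[sp]}$ preserves the minimum element of the underlying linear order on $[ssp]$, by the inductive construction in Lemma \ref{Lemma Max p}. This yields $inc_{[sp]} \circ o_{[sp]} \simeq o_{[ssp]}$, and hence $g \circ o_{[sp]} \simeq pm_{[sp]} \circ o_{[ssp]} \simeq b$ by the defining diagram of $pm_{[sp]}$. To verify the successor condition $g \circ \iota_2 \simeq u \circ g \circ inc_{[p]}$, I would first use the explicit formula $inc_{[sp]} \simeq id_1 \coprod inc_{[p]}$ from Lemma \ref{Lemma Max p} to obtain the intertwining identity $inc_{[sp]} \circ \iota_2 \simeq \iota_2 \circ inc_{[p]}$ (the outer $\iota_2$ being the second inclusion into $[ssp] = 1 \coprod [sp]$), and then invoke the defining identity $pm_{[sp]} \circ \iota_2 \simeq u \circ pm_{[sp]} \circ inc_{[sp]}$ of $pm_{[sp]}$ to conclude via a short diagram chase.

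With both maps sitting in the contractible space $Part([p], (X,b,u))$, the desired equivalence is then immediate. The main obstacle I anticipate is purely notational bookkeeping: keeping track of which $\iota_2$ and which $inc$ is meant at each step, and confirming that the formula $inc_{[sp]} \simeq id_1 \coprod inc_{[p]}$ produces precisely the intended interaction between the two successive inclusion maps $[p] \to [sp]$ and $[sp] \to [ssp]$. No further descent, induction, or limit-preservation argument should be required, since all the genuinely technical content — the contractibility of the space of finite partial maps — has already been isolated into Corollary \ref{Cor Part p Xbu Contr}.
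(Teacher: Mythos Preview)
Your proposal is correct and follows essentially the same approach as the paper: both arguments show that the composite $pm_{1 \coprod [p]} \circ inc_{(1 \coprod [p])}$ lies in the contractible space $Part([p],(X,b,u))$ and conclude by invoking Corollary~\ref{Cor Part p Xbu Contr}. The paper presents the verification that the composite is a valid partial map via a single commutative diagram, whereas you explicitly separate out the base-point and successor conditions and justify each using the formula $inc_{[sp]} \simeq id_1 \coprod inc_{[p]}$ from Lemma~\ref{Lemma Max p}; this is merely a difference in presentation, not in substance.
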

 
 \begin{proof}
 	We have following diagram
 	\begin{center}
 		\begin{tikzcd}[row sep=0.35in, column sep=0.7in]
 			&[0.3in] \leb p \reb \arrow[r, "\iota_2"] \arrow[d, "inc_{[p]}" description] & 1 \coprod \leb p \reb \arrow[d, "inc_{(1 \coprod [p])}" description] \\  
 			1  \arrow[ur, "o"] \arrow[r, "\iota_1"] \arrow[dr, "b"'] & 
 			1 \coprod \leb p \reb \arrow[r, "\iota_2"] \arrow[d, "pm_{(1 \coprod [p])}inc_{1 \coprod [p]}" description] &
 			1 \coprod ( 1 \coprod [p] ) \arrow[d, "pm_{(1 \coprod [p])}" description] \\
 			& X \arrow[r, "u"] & X
 		\end{tikzcd}.
 	\end{center}
 	The fact that it commutes implies that $pm_{(1 \coprod [p])}inc_{1 \coprod [p]}$ is in $\Part([p], (X,b,u))$. 
 	However, this space is contractible, which implies that $pm_{1 \coprod [p]} inc_{(1 \coprod [p])} \simeq pm_{[p]}$.
 \end{proof}
 
 We can use these results to prove that $pm$ satisfies following important property that we use in \cref{Subsec Total vs. Partial Maps} to prove the map $\Total$ is well-defined.
 
 \begin{propone} \label{Prop Partial Maps Parametrized}
 	We have an equivalence $pm(id \times s) \circ \Delta \simeq pm \circ \Delta $.
 \end{propone}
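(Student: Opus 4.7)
The plan is to establish the equivalence pointwise over $\mathbb{N}$ using the restriction lemmas already proved, and then lift to a coherent equivalence of maps via contractibility of the ambient partial-map spaces. First I would unpack the two composites: reading $\Delta\colon \mathbb{N}\to \mathbb{N}\coprod \mathbb{N}_1$ as the diagonal $p\mapsto(p,p)$ landing in the $\mathbb{N}_1$ summand via $inc_{\mathbb{N}_1}$ (since $p\le p$), and observing that $(id\times s)\circ\Delta$ sends $p$ to $(p,sp)$, which also lies in $\mathbb{N}_1$ (since $p\le sp$). Thus the proposition amounts to comparing $pm(p,p)$ with $pm(p,sp)$ at the generic point $p\colon 1\to \mathbb{N}$.

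The main step combines Lemma~\ref{Lemma PM restricts to PM P} and Lemma~\ref{Lemma PM Restricts}. Applying the former at a chosen finite cardinal gives $pm(p,p)\simeq pm_{[p]}(p,p)$ and $pm(p,sp)\simeq pm_{[sp]}(p,sp)$. Writing $[sp]=1\coprod[p]$, the latter lemma supplies an equivalence $pm_{[sp]}\circ inc_{[sp]}\simeq pm_{[p]}$. Since $p$ is a non-maximal position in $[sp]$ (its maximum being $sp$), the inclusion $inc_{[sp]}\colon [sp]\hookrightarrow [ssp]$ sends the position $(p,p)$ of $[sp]$ to the position $(p,sp)$ of $[ssp]$. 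Chasing this identification through yields $pm_{[sp]}(p,sp)\simeq pm_{[p]}(p,p)$, and concatenating the three equivalences gives
\begin{equation*}
pm(p,sp)\;\simeq\; pm_{[sp]}(p,sp)\;\simeq\; pm_{[p]}(p,p)\;\simeq\; pm(p,p).
\end{equation*}

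The main obstacle is promoting this pointwise comparison to a coherent equivalence of maps $\mathbb{N}\to X$: in an $(\infty,1)$-category, fiberwise equivalences need not assemble into a global one without additional coherence data. I would address this by running the argument generically over $\mathbb{N}$, using the projections $\pi_2\colon \mathbb{N}_1\to \mathbb{N}$ and $\pi_2\colon\mathbb{N}\coprod\mathbb{N}_1\to \mathbb{N}$ in place of individual fibers, and invoking the generic form of Lemma~\ref{Lemma PM Restricts} that is implicit in the construction of $pm$. Contractibility of $Part(X,b,u)$ (Theorem~\ref{The Part Contractible}) and of $Part([p],(X,b,u))$ for every finite cardinal (Corollary~\ref{Cor Part p Xbu Contr}) guarantees that the space of witnesses for this comparison is itself contractible, so the fiberwise equivalences extend uniquely to a single equivalence $pm\circ(id\times s)\circ\Delta\simeq pm\circ\Delta$ over $\mathbb{N}$, as required.
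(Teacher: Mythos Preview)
Your pointwise calculation is essentially the one the paper uses: via Lemma~\ref{Lemma PM restricts to PM P} and Lemma~\ref{Lemma PM Restricts} one obtains $pm(n,sn)\simeq pm_{[sn]}(n)\simeq pm_{[n]}(n)\simeq pm(n,n)$ at each $n$. The gap is in your final paragraph, where you pass from this fiberwise statement to a global equivalence of maps $\mathbb{N}\to X$.

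Your appeal to contractibility of $Part(X,b,u)$ and $Part([p],(X,b,u))$ does not do the work you claim. The two maps $pm\circ(id\times s)\circ\Delta$ and $pm\circ\Delta$ are maps $\mathbb{N}\to X$; they are not points of $Part(X,b,u)$ (whose elements are maps $\mathbb{N}\coprod\mathbb{N}_1\to X\times\mathbb{N}$ over $\mathbb{N}$ satisfying the partial-map conditions), nor are they points of any $Part([p],(X,b,u))$. So the contractibility of those spaces gives no direct control over the path space between these two maps. Saying you will ``run the argument generically over $\mathbb{N}$'' and invoke a ``generic form'' of Lemma~\ref{Lemma PM Restricts} is exactly the step that requires justification, and the whole point of Subsection~\ref{Subsec The Internal Object of Contractibility} is that such generic lifts are not automatic in an $(\infty,1)$-category.

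The paper's mechanism is precisely what you are missing: form the equalizer $Eq\to\mathbb{N}$ of the two maps, observe that $isContr_{\mathbb{N}}(Eq)\hookrightarrow\mathbb{N}$ is a subobject (Lemma~\ref{Lemma IsContr is Mono}), and then use the \emph{Peano} axiom to show this subobject is all of $\mathbb{N}$ by checking closure under $o$ and $s$. Your pointwise computation is exactly the ingredient needed for that closure check, but it has to be fed through the equalizer/$isContr$/Peano-induction machinery to yield a global equivalence. Without that step the argument is incomplete.
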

 
 \begin{proof}
 	In order to prove we have an equivalence we show that in the following equalizer diagram 
 	\begin{equation} \label{eq:equalizer}
 		\begin{tikzcd}[column sep=0.8in]
 			Eq \arrow[r] & \mathbb{N} \arrow[r, shift left=0.05in, "pm (id \times s) \circ \Delta"] 
 			\arrow[r, shift right=0.05in, "pm \circ \Delta "'] & X
 		\end{tikzcd}
 	\end{equation}
 	the map $Eq \to \mathbb{N}$ is the final object in $\E_{/ \mathbb{N}}$. According to \cref{Lemma IsContr giving Contractibility} 
 	the result will follow if we prove that $isContr_{\mathbb{N}}(Eq)$ is the final object in $\E_{/ \mathbb{N}}$.
 	However, we also know that $isContr_{\mathbb{N}}(Eq) \to \mathbb{N}$ is mono (\cref{Lemma IsContr is Mono}) and $\mathbb{N}$ is a 
 	Peano natural number object. 
 	This means that $isContr_{\mathbb{N}}(Eq)$ is final if it is closed under the maps $o$ and $s$.\
 	\par 
 	Before we can continue the proof, we need to better understand restrictions of this equalizer diagram by finite cardinals.
 	Let $[p]$ be a finite cardinal. The inclusion map $inc_{[p]} : [p] \to \mathbb{N}$ gives us a new equalizer diagram
 	\begin{center}
 		\begin{tikzcd}[row sep=0.3in, column sep=0.8in]
 			\leb p \reb^* Eq \arrow[d, hookrightarrow] \arrow[r] & \leb p \reb \arrow[r, shift left=0.05in, "pm (id \times s) \circ \Delta \circ inc_{[p]}"] 
 			\arrow[r, shift right=0.05in, "pm \circ \Delta  \circ inc_{[p]}"'] \arrow[d, hookrightarrow] & X \arrow[d] \\
 			Eq \arrow[r] & \mathbb{N} \arrow[r, shift left=0.05in, "pm (id \times s) \circ \Delta"] 
 			\arrow[r, shift right=0.05in, "pm \circ \Delta "'] & X
 		\end{tikzcd}.
 	\end{center}
    Making the appropriate modification to \cref{eq:equalizer}, we can observe that the two maps $pm (id \times s) \circ \Delta \circ inc_{[p]}$ and 
    $pm \circ \Delta  \circ inc_{[p]}$ are equivalent if and only if $[p]^* Eq \to [p]$ is the final object in $\E_{/ [p]}$.  
 	However, by \cref{Rem IsContr Basechange Stable} we know that $[p]^* isContr_{\mathbb{N}}(Eq) \simeq isContr_{[p]}([p]^*Eq)$.
 	Thus it reduces to showing that the subobject $[p]^* isContr_{[p]}( Eq) \to [p]$ is the final object in $\E_{/[p]}$, which implies that $isContr_{\mathbb{N}}(Eq)$ is the subobject of $\mathbb{N}$  consisting of all finite cardinals $[p]$ for which the object $isContr_{[p]}([p]^*Eq)$ is the final object. 
 	\par
 	According to the Peano axiom, in order to show that $isContr_{\mathbb{N}}(Eq)$ is the maximal subobject, we have to show that the following 
 	two statements hold. 
 	\begin{enumerate}
 		\item We have an equivalence $pm(id \times s) \circ \Delta \circ o \simeq pm \circ \Delta \circ o : 1 \to X$.
 		\item Let $[p]$ be a finite cardinal and assume we have an equivalence $pm(id \times s) \circ \Delta \circ inc_{[p]} \simeq pm \circ \Delta \circ inc_{[p]} : [p] \to X$. Then we have an equivalence $pm(id \times s) \circ \Delta \circ inc_{[sp]} \simeq pm \circ \Delta \circ inc_{[sp]} : [sp] \to X$.
 	\end{enumerate}
 	Before we prove anything let us gain a better understanding of the maps involved.
 	For an object $n:1 \to \mathbb{N}$ we have 
 	$$pm (id \times s) \circ \Delta \circ inc_{[p]}(n) =  pm (id \times s) \circ \Delta (n) = pm (id \times s) (n,n) = pm(n,sn) \simeq pm_{[sn]}(n).$$
 	The last step follows from \cref{Lemma PM restricts to PM P}.
 	Similarly, we have 
 	$pm \circ \Delta  \circ inc_{[p]}(n) \simeq pm_{[n]}(n)$.
 	By \cref{Lemma PM Restricts} we have $ pm_{[sn]}(n) \simeq pm_{[sn]} \circ inc_{[n]} (n) \simeq pm_{[n]}(n)$. 
 	\par  
 	Now we can prove numbered statements $(1)$ and $(2)$. The first one follows immediately from the previous paragraph if we use $n=o: 1 \to \mathbb{N}$.
 	We now want to prove $(2)$. Let us assume we have an equivalence 
 	$pm(id \times s) \circ \Delta \circ inc_{[p]} \simeq pm \circ \Delta \circ inc_{[p]} : [p] \to X$.
 	By \cref{Lemma Max p}, we have a pushout square
 	\begin{center}
 		\begin{tikzcd}[row sep=0.25in, column sep=0.25in]
 			\emptyset \arrow[d] \arrow[r] & 1 \arrow[d, "max(\leb sp \reb )"] \\
 			\leb p \reb \arrow[r, "inc_{[p]}"'] & \leb sp \reb \arrow[ul, phantom, "\ulcorner", very near start]
 		\end{tikzcd}.
 	\end{center}
 	We want to prove  
 	$$pm(id \times s) \circ \Delta \circ inc_{[sp]} \simeq pm \circ \Delta \circ inc_{[sp]} : [sp] \to X .$$
 	Because of the coproduct diagram it suffices to prove 
 	$$pm(id \times s) \circ \Delta \circ inc_{[sp]} \circ max([sp]) \simeq pm \circ \Delta \circ inc_{[sp]} \circ max([sp]): 1 \to X$$
 	and 
 	$$pm(id \times s) \circ \Delta \circ inc_{[sp]} \circ inc_{[p]} \simeq pm \circ \Delta \circ inc_{[sp]} \circ inc_{[p]} : [p] \to X$$
 	The first one follows from the previous paragraph, using the case $n = max([sp]) :  1 \to X$.
 	The second case follows from the induction assumption combined with the fact that $inc_{[sp]} \circ inc_{[p]} = inc_{[p]}$.
 \end{proof}
 	The content of \cref{Prop Partial Maps Parametrized} is that a point in $Part(X,b,u)$ are choices of maps $[p] \to X$ for every finite cardinal $[p]$ that are all consistent with each other. This exactly confirms our intuition from \cref{Ex Partial Maps Spaces}, where each column 
 	is the restriction of the next column.
   
 \subsection{Total vs. Partial Maps} \label{Subsec Total vs. Partial Maps}
  Our final goal it to construct maps
 $$\Ind(X,b,u) \xrightarrow{ \ \ \Partial \ \ } \Part(X,b,u) \xrightarrow{ \ \ \Total \ \ } \Ind(X,b,u)$$ 
 such that $\Total \circ \Partial$ is equivalent to the identity. Then the contractibility of $\Part(X,b,u)$ implies that 
 $\Ind(X,b,u)$ is also contractible finishing the proof of \cref{The Freyd and Peano gives Lawvere}.
 
  Here is an intuitive idea of these maps.
  The map $\Partial: \Ind(X,b,u) \to \Part(X,b,u)$ takes a map defined on $\mathbb{N}$ and restricts it to a family of 
  finite partial maps. On the other hand, the map $\Total : \Part(X,b,u) \to \Ind(X,b,u)$ takes a family of finite partial maps 
  (as depicted in \cref{eq:point in part}) to the diagonal, which gives us a full map on $\mathbb{N}$. 
  Let $\Partial : \Ind(X,b,u) \to \Part(X,b,u)$ be defined by taking $f$ to the map
  $$\Partial (f): \mathbb{N} \coprod \mathbb{N}_1 \xrightarrow{ \ \ inc(n \leq sm) \ \ } \mathbb{N}  \times \mathbb{N} 
  \xrightarrow{ \ \ f \times id \ \ } X \times \mathbb{N}. $$
  Thus $\Partial (f) = (f \times id_{\mathbb{N}}) \circ inc(n \leq sm),$ where $inc(n \leq sm): \mathbb{N} \coprod \mathbb{N}_1 \to \mathbb{N} \times \mathbb{N}$ is the inclusion map defined in \cref{Rem Inc N Leq M}.
 By definition $\Partial (f): \mathbb{N} \coprod \mathbb{N}_1 \to X \times \mathbb{N}$. However, we have to 
 prove that $\Partial (f)$ is actually a point in $Part(X,b,u)$, by showing it satisfies the right conditions.
 We have the following diagram
 \begin{center}
    \begin{tikzcd}[row sep=0.2in, column sep=0.9in]
     & \mathbb{N}_1 \arrow[rr, "s \times id_{\mathbb{N}}"] \arrow[d, hookrightarrow, "inc(n \leq m)"' near end] \arrow[ddr, "\pi_2" near start] & & 
       \mathbb{N} \coprod \mathbb{N}_1 \arrow[ddr, "id_{\mathbb{N}} \coprod \pi_2"] \arrow[d, hookrightarrow, "inc(n \leq sm)"'] \\
     & \mathbb{N} \times \mathbb{N} \arrow[dd, "f \times id_{\mathbb{N}}"] \arrow[rr, "s \times id_{\mathbb{N}}"] & & 
    \mathbb{N} \times \mathbb{N} \arrow[dd, "f \times id_{\mathbb{N}}"'] \\  
     \mathbb{N} \arrow[uur, "o", bend left=20] \arrow[dr, "(b \comma id_{\mathbb{N}})"'] & & \mathbb{N} & & \mathbb{N} \\
     & X \times \mathbb{N} \arrow[ur, "\pi_2"] \arrow[rr, "u \times id_\mathbb{N}"] & & X \times \mathbb{N} \arrow[ur, "\pi_2"]
   \end{tikzcd}.
  \end{center}
 The bottom square commutes because $f$ is in $\Ind(X,b,u)$. The top square commutes because the vertical maps are inclusions 
 and the horizontal maps are equal. Finally, we also have to show the left side vertical map 
 ($(f \times id_{\mathbb{N}}) \circ inc(n \leq m)$) is the restriction of the right side map 
 ($(f \times id_{\mathbb{N}}) \circ inc(n \leq sm)$) 
 along $inc_{\mathbb{N}_1}$. 
 However, according to \cref{Rem Inc N Leq M}, we have $ inc(n \leq sm) \circ inc_{\mathbb{N}_1} = inc_{\mathbb{N}_1} = inc(n \leq m).$
 This immediately implies that $(f \times id_{\mathbb{N}}) \circ inc(n \leq sm) \circ inc_{\mathbb{N}_1} = (f \times id_{\mathbb{N}}) \circ inc(n \leq m)$ and proves that $\Partial (f)$ is actually in $\Part(X,b,u)$.
 
 Next we define the assignment that takes a family of partial maps to a total map.
  Let $\Total: \Part(X,b,u) \to \Ind(X,b,u)$ be defined by taking a map $(g, id_{\mathbb{N}} \coprod \pi_2)$ to the map
  $$\Total((g, id_{\mathbb{N}} \coprod \pi_2)) = \mathbb{N} \xrightarrow{ \ \ \Delta \ \ } \mathbb{N} \coprod \mathbb{N}_1 
  \xrightarrow{ \ \ (g, id_{\mathbb{N}} \coprod \pi_2) \ \ }  X \times \mathbb{N} \xrightarrow{ \ \ \pi_1 \ \ } X$$
  Concretely, it is the composition $\Total( (g, id_{\mathbb{N}} \coprod \pi_2)) = \pi_1 \circ (g \comma id_{\mathbb{N}} \coprod \pi_2) \circ \Delta . $
 The map is part of the following diagram
  \begin{center}
   \begin{tikzcd}[row sep=0.2in, column sep=0.8in]
   & & \mathbb{N} \arrow[d, "\Delta"] \arrow[rr, "s"] & & \mathbb{N} \arrow[d, "\Delta"] \\
    & & \mathbb{N}_1 \arrow[dr, "\pi_2"]  \arrow[rr, "s \times id_{\mathbb{N}}"] \arrow[dd, "(g \comma \pi_2)"] & & 
       \mathbb{N} \coprod \mathbb{N}_1 \arrow[dr, "id_{\mathbb{N}} \coprod \pi_2"] \arrow[dd, "(g \comma id_{\mathbb{N}} \coprod \pi_2)"'] \\
   1 \arrow[ddrr, "b"] \arrow[uurr, "o"] \arrow[r, "o"] &  \mathbb{N} \arrow[ur, "o"] \arrow[dr, "(b \comma id_{\mathbb{N}})"'] & 
    & \mathbb{N} & 
    & \mathbb{N} \\
    & & X \times \mathbb{N} \arrow[ur, "\pi_2"]   \arrow[rr, "u \times id_{\mathbb{N}}"] \arrow[d, "\pi_1"] & & 
    X \times \mathbb{N} \arrow[ur, "\pi_2"] \arrow[d, "\pi_1"] \\
    & & X \arrow[rr, "u"] & & X 
   \end{tikzcd}.
  \end{center}
  We have to show that $\Total(g)$ satisfies the right conditions, which means that the large 
  rectangle commutes. Before we do that first let us analyze the inner square. 
  By assumption, we have $ (u \times id_{\mathbb{N}}) \circ (g, \pi_2) \simeq (g, id_{\mathbb{N}} \coprod \pi_2) \circ (s \times id_{\mathbb{N}}) .$
  Composing the maps we get $ (ug , \pi_2 ) \simeq (g \circ (s \times id_{\mathbb{N}}) , (id_{\mathbb{N}} \coprod \pi_2) \circ (s \times id_{\mathbb{N}})) . $ This in particular implies that $ug \simeq g( s \times id_{\mathbb{N}}).$
   We will use this equivalence to show that $\Total( (g, id_{\mathbb{N}} \coprod \pi_2))$ is in $Ind(X,b,u)$.
   \par 
  In order to get the desired result we need to show that $\pi_1 \circ (g \comma id_{\mathbb{N}} \coprod \pi_2) \circ \Delta \circ s \simeq u \circ \pi_1 \circ (g \comma \pi_2) \circ \Delta .$ First, notice that $\pi_1 \circ (g \comma id_{\mathbb{N}} \coprod \pi_2)  \simeq g$ and $\pi_1 \circ (g \comma \pi_2) \simeq g$. 
  Moreover, $\Delta \circ s = s \times s$. Thus we have to prove
  $g \circ (s \times s) \circ \Delta \simeq  u \circ g \circ \Delta .$
  However, by \cref{Prop Partial Maps Parametrized}, $g \circ (s \times s) \circ \Delta  \simeq g \circ (s \times id) \circ \Delta $, 
  as $g$ is equivalent to $pm$. 
  Thus we have to show
  $g \circ (s \times id) \circ \Delta \simeq  u \circ g \circ \Delta .$
  However, we showed in the previous paragraph that $ug \simeq g( s \times id_{\mathbb{N}})$ and so we get the desired equivalence by
  precomposing with $\Delta$. 
  This implies that $\Total( (g, id_{\mathbb{N}} \coprod \pi_2))$ is in $Ind(X,b,u)$ and finishes our argument. 
 
 \begin{propone} \label{Prop PartialTotal id}
  We have an equivalence $\Partial ( \Total (f) ) \simeq f.$
 \end{propone}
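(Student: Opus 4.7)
The plan is to use the contractibility of $Part(X,b,u)$ as the main tool. Both $\Partial \circ \Total$ and $id_{Part(X,b,u)}$ are self-maps of $Part(X,b,u)$, and this space is contractible by Theorem \ref{The Part Contractible}. Hence the space of self-maps $Map_\E(Part(X,b,u), Part(X,b,u))$ is itself contractible, so these two endomorphisms are canonically equivalent; evaluating at $f$ gives the proposition.

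For a more explicit witness, which is useful when later extracting a concrete equivalence $Ind(X,b,u) \simeq Part(X,b,u)$, I would unpack the definitions. Writing $f = (g,\, id_\mathbb{N} \coprod \pi_2)$ for some $g: \mathbb{N} \coprod \mathbb{N}_1 \to X$, a direct computation gives $\Total(f) = g \circ \Delta$ and hence
\[
\Partial(\Total(f)) \;=\; \bigl((g \circ \Delta) \circ \pi_1 \circ inc(n \leq sm),\; \pi_2 \circ inc(n \leq sm)\bigr).
\]
By Remark \ref{Rem Inc N Leq M}, the second coordinate simplifies to $id_\mathbb{N} \coprod \pi_2$, matching $f$, so the entire task reduces to producing an equivalence $g \circ \Delta \circ \pi_1 \circ inc(n \leq sm) \simeq g$ of maps $\mathbb{N} \coprod \mathbb{N}_1 \to X$.

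I would verify this equivalence summand-by-summand. On the first summand $\mathbb{N} \hookrightarrow \mathbb{N} \coprod \mathbb{N}_1$, the endomap $\Delta \circ \pi_1 \circ inc(n \leq sm)$ factors through the basepoint, so the claim becomes the defining axiom $g \circ o = b$ of $f \in Part(X,b,u)$. On the second summand $\mathbb{N}_1$, the endomap collapses a pair $(n,m)$ to its diagonal version $(n, sn)$, so the required identity becomes a column-independence statement for $g$ at each row; the adjacent-column swap is exactly Proposition \ref{Prop Partial Maps Parametrized}, and iterating it to cover arbitrary columns is the main obstacle.

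The main obstacle in the explicit route is precisely this iteration, since Proposition \ref{Prop Partial Maps Parametrized} only provides column-independence between a column and its successor along the diagonal. I would address it by an internal Peano induction on the column parameter, building on Lemmas \ref{Lemma PM restricts to PM P} and \ref{Lemma PM Restricts} to reduce comparisons between distant columns to iterated adjacent swaps. The contractibility shortcut in the first paragraph bypasses this induction entirely and gives the cleanest proof of the proposition.
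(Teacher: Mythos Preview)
Your contractibility shortcut does correctly prove the literal statement: since $Part(X,b,u)$ is contractible, its space of self-maps is contractible, so $\Partial\circ\Total$ and $id_{Part(X,b,u)}$ must be equivalent. However, you should be aware that the proposition as stated is a typo. The surrounding text announces maps
\[
Ind(X,b,u)\xrightarrow{\ \Partial\ }Part(X,b,u)\xrightarrow{\ \Total\ }Ind(X,b,u)
\]
with the goal of showing $\Total\circ\Partial\simeq id$, and the paper's proof treats $f$ as an element of $Ind(X,b,u)$ (it writes $f\times id_{\mathbb{N}}$ and aims at $\pi_1\circ(f\times id_{\mathbb{N}})\circ inc(n\leq m)\circ\Delta\simeq f$). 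In other words, the paper actually proves $\Total(\Partial(f))\simeq f$ for $f\in Ind(X,b,u)$, which is exactly the retraction needed to deduce that $Ind(X,b,u)$ is contractible from the contractibility of $Part(X,b,u)$.

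Your shortcut cannot be transported to this direction: showing $\Total\circ\Partial\simeq id_{Ind(X,b,u)}$ by ``the target is contractible'' would presuppose that $Ind(X,b,u)$ is contractible, which is the very thing we are trying to establish. The paper's proof of the needed direction is a one-line direct computation: since $\Partial(f)=(f\times id_{\mathbb{N}})\circ inc(n\leq sm)$, one has
\[
\Total(\Partial(f))=\pi_1\circ(f\times id_{\mathbb{N}})\circ inc(n\leq m)\circ\Delta
= f\circ\pi_1\circ inc(n\leq m)\circ\Delta = f\circ\pi_1\circ\Delta = f,
\]
using only $inc(n\leq m)\circ\Delta=\Delta$ and $\pi_1\circ\Delta=id_{\mathbb{N}}$. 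No column-independence induction is needed here; that difficulty (and Proposition~\ref{Prop Partial Maps Parametrized}) arises only in the verification that $\Total$ lands in $Ind(X,b,u)$, which the paper handles separately before stating the proposition. So your explicit route is attacking the harder composite; the one the paper actually needs is the trivial one.
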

 \begin{proof}
 In order to prove it we have following commutative diagram
  \begin{center}
   \begin{tikzcd}[row sep=0.2in, column sep=0.8in]
   & & \mathbb{N} \arrow[d, "\Delta"] \arrow[rr, "s"] & & \mathbb{N} \arrow[d, "\Delta"] \\
    & & \mathbb{N}_1 \arrow[rr, "s \times id_{\mathbb{N}}"] \arrow[d, "inc(n \leq m)"] & & 
       \mathbb{N} \coprod \mathbb{N}_1 \arrow[dr, "id_{\mathbb{N}} \coprod \pi_2"] \arrow[d, "inc(n \leq sm)"'] \\
   1 \arrow[ddrr, "b"] \arrow[uurr, "o"] \arrow[r, "o"] &  \mathbb{N} \arrow[ur, "o"] \arrow[dr, "(b \comma id_{\mathbb{N}})"'] & 
   \mathbb{N} \times \mathbb{N} \arrow[d, "f \times id_{\mathbb{N}}"] &  & 
   \mathbb{N} \times \mathbb{N} \arrow[d, "f \times id_{\mathbb{N}}"] & \mathbb{N} \\
    & & X \times \mathbb{N}  \arrow[rr, "u \times id_{\mathbb{N}}"] \arrow[d, "\pi_1"] & & X \times \mathbb{N} \arrow[ur, "\pi_2"] \arrow[d, "\pi_1"] \\
    & & X \arrow[rr, "u"] & & X 
   \end{tikzcd}.
  \end{center} 
  Thus we need to show that  $\pi_1 \circ (f \times id_{\mathbb{N}}) \circ inc_{n \leq m} \circ \Delta  \simeq f . $
  In order to do that we first notice that $\pi_1 \circ (f \times id_{\mathbb{N}}) \simeq f \circ \pi_1$ which means we can also show
  $f \circ \pi_1 \circ inc_{n \leq m} \circ \Delta  \simeq f . $
  At this point the result follows immediately from the fact that $inc_{n \leq m} \circ \Delta = \Delta$ and $\pi_1 \circ \Delta = id_{\mathbb{N}}$.
  \end{proof}
   
\section{Applications of Natural Number Objects} \label{Sec Applications}
  In this last section we want to look at certain implications of the existence of natural number objects.
  The first subsection (\cref{Subsec Additional Properties of Natural Number Objects}) will focus on additional properties of the natural number object. 
  The rest focuses on the interaction between the natural number object and universes, in the context of an elementary $(\infty,1)$-topos, which we will hence review first in \cref{Subsec Not every Elementary Topos lifts to an Elementary Higher Topos}. In particular, we will study external (\cref{Subsec Infinite Colimits in an Elementary Higher Topos}) and internal (\cref{Subsec Internal Infinite Coproducts and Sequential Colimits}) infinite colimits.

\subsection{Additional Properties of Natural Number Objects} \label{Subsec Additional Properties of Natural Number Objects}
 In this short subsection we discuss some of the results that hold for natural number objects in an elementary topos and 
 directly generalize to an $(\infty,1)$-category that satisfies the conditions of \cref{Subsec Notation}. As the proofs are completely analogous we will just cite the relevant sources. 
 
 \begin{propone}\cite[Proposition A2.5.2]{johnstone2002elephanti}
  Let $(\mathbb{N}_\E, o,s)$ be a natural number object in an elementary $(\infty,1)$-topos. Then for any morphism $g: A \to B$ 
  and $h: A \times \mathbb{N}_\E \times B \to B$, there exists a unique $f: A \times \mathbb{N}_\E \to B$ 
  such that the diagram commute.
  \begin{center}
   \begin{tikzcd}[row sep=0.25in, column sep=0.25in]
    A \arrow[r, "(1_A \comma o)"] \arrow[d, "id_A"] & A \times \mathbb{N}_\E \arrow[d, "f"] & 
    \times \mathbb{N}_\E \arrow[l, "1_A \times s"'] \arrow[d, "(1_{A \times \mathbb{N}_\E} \comma f)"] \\
    A \arrow[r, "g"'] & B & A \times \mathbb{N}_\E \times B \arrow[l, "h"] 
   \end{tikzcd}
  \end{center}
 \end{propone}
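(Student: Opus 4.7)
The plan is to derive this parametrized primitive recursion principle from the Lawvere universal property of $\mathbb{N}_\E$ established in Theorem \ref{The Freyd and Peano gives Lawvere}, in two steps: first establish parametrized iteration, then reduce primitive recursion to iteration by the standard pairing trick. Throughout, ``uniqueness'' should be read in the $(\infty,1)$-categorical sense as contractibility of the space of solutions, and this property will be preserved through each step because the product--exponential adjunction coming from local Cartesian closure and the Lawvere NNO property are both formulated at the mapping-space level.

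For parametrized iteration, suppose first that we are given only $g: A \to B$ and an auxiliary $h': A \times B \to B$, and seek a contractible space of maps $f': A \times \mathbb{N}_\E \to B$ with $f' \circ (1_A, o \circ t_A) = g$ and $f' \circ (1_A \times s) = h' \circ (1_A, f')$, where $t_A: A \to 1_\E$ is the terminal map. Using local Cartesian closure, form the exponential $B^A$, let $\bar g: 1_\E \to B^A$ be the transpose of $g$, and let $\bar h: B^A \to B^A$ be the transpose of the composite $A \times B^A \xrightarrow{(\pi_A, \mathrm{ev})} A \times B \xrightarrow{h'} B$. The Lawvere NNO property applied to the tuple $(B^A, \bar g, \bar h)$ gives a contractible space of $\bar f: \mathbb{N}_\E \to B^A$ satisfying the iteration equations in $B^A$; transposing back along the adjunction $A \times (-) \dashv (-)^A$ yields the desired contractible space of $f'$.

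For the reduction from recursion to iteration, given the recursion data $g$ and $h$, set $B' = \mathbb{N}_\E \times B$, let $g'' = (o \circ t_A, g): A \to B'$, and let $h'': A \times B' \to B'$ be defined by $h'' = (s \circ \pi_{\mathbb{N}_\E},\; h \circ (\pi_A, \pi_{\mathbb{N}_\E}, \pi_B))$. Parametrized iteration produces a unique $f'': A \times \mathbb{N}_\E \to B'$ solving $(B', g'', h'')$. I then verify $\pi_{\mathbb{N}_\E} \circ f'' \simeq \pi_{\mathbb{N}_\E}: A \times \mathbb{N}_\E \to \mathbb{N}_\E$ by noting that both maps solve the iteration for the tuple $(\mathbb{N}_\E,\; o \circ t_A,\; s \circ \pi_B)$ with coefficient object $\mathbb{N}_\E$, and invoking the uniqueness clause of step one with target $\mathbb{N}_\E$. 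Setting $f = \pi_B \circ f''$ then produces the required map, and commutativity of the stated diagram follows directly from the defining equations of $f''$ combined with the identification of the first coordinate. For uniqueness, any candidate $f_1$ solving the original recursion induces the pairing $(\pi_{\mathbb{N}_\E}, f_1): A \times \mathbb{N}_\E \to B'$, which solves the iteration for $(B', g'', h'')$ and so is equivalent to $f''$ by step one; therefore $f_1 \simeq \pi_B \circ f'' = f$, giving contractibility of the space of solutions. The main delicate point is the auxiliary uniqueness in step two: one must transfer the contractibility of iteration solutions with target $\mathbb{N}_\E$ to a coherent identification of $\pi_{\mathbb{N}_\E} \circ f''$ with $\pi_{\mathbb{N}_\E}$, but since step one produces this as a contractible space of homotopies at the mapping-space level, no further work is required.
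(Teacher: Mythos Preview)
Your proof is correct and follows essentially the same route as the paper intends: the paper does not spell out an argument but simply refers to \cite[Proposition A2.5.2]{Jo03}, whose proof is precisely the two-step reduction you carry out (parametrized iteration via exponentials, then primitive recursion via the pairing trick with $\mathbb{N}_\E \times B$), adapted here to the $(\infty,1)$-categorical setting by phrasing uniqueness as contractibility of solution spaces.
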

 
 \begin{lemone}
 	\cite[Lemma A2.5.16]{johnstone2002elephanti}
 	Let $\mathbb{N}_\E$ be a natural number object and let $f: A \to \mathbb{N}_\E$ be a map. Then for any map $a: 1_\E \to o^*(f)$ and 
 	$t: f \to s^*(f)$ in $\E_{/\mathbb{N}_\E}$ there exists a unique section $h$ of $f$ such that $o^*(h) = a$ and $s^*(h) = th$. 
 \end{lemone}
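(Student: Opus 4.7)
The plan is to reduce the statement to the Lawvere universal property of $\mathbb{N}_\E$ established in the previous sections. First, I would unpack the slice data into data in $\E$. A map $a \colon 1_\E \to o^*(f)$ is, by the universal property of the pullback defining $o^*(f)$, the same as a map $\tilde a \colon 1_\E \to A$ together with a chosen equivalence $f\tilde a \simeq o$. Likewise, a morphism $t \colon f \to s^*(f)$ in $\E_{/\mathbb{N}_\E}$ is the same as a map $\tilde t \colon A \to A$ together with a chosen equivalence $f\tilde t \simeq sf$. A section $h$ of $f$ is a map $h \colon \mathbb{N}_\E \to A$ with $fh \simeq id_{\mathbb{N}_\E}$, and in these terms the two conditions $o^*(h) \simeq a$ and $s^*(h) \simeq th$ unpack to the familiar recursion equations $ho \simeq \tilde a$ and $hs \simeq \tilde t h$.

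With this translation in hand, the second step is to apply the Lawvere universal property of $\mathbb{N}_\E$ to the triple $(A,\tilde a,\tilde t)$. This immediately produces a map $h \colon \mathbb{N}_\E \to A$, unique up to contractible choice, satisfying the two recursion equations. This $h$ is the candidate section.

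The third step is to verify that $fh \simeq id_{\mathbb{N}_\E}$, so that $h$ really is a section. Consider $fh \colon \mathbb{N}_\E \to \mathbb{N}_\E$. Using the chosen homotopies above, we compute $(fh)o \simeq f\tilde a \simeq o$ and $(fh)s \simeq f\tilde t h \simeq sfh = s(fh)$. Hence $fh$ fits into the Lawvere diagram for $\mathbb{N}_\E$ with base $o$ and successor $s$. But $id_{\mathbb{N}_\E}$ also fits this diagram trivially, and the space of such maps is contractible by the Lawvere property of $\mathbb{N}_\E$ itself (see Remark \ref{Rem NNO Space}), so $fh \simeq id_{\mathbb{N}_\E}$. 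Uniqueness of $h$ as a section with the prescribed restrictions $o^*(h) \simeq a$ and $s^*(h) \simeq th$ is then immediate from the same contractibility, since any such section $h'$ yields the same data $(A,\tilde a, \tilde t)$ and therefore must agree with $h$ up to contractible choice.

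The main subtlety, rather than a genuine obstacle, will be maintaining coherence in the $(\infty,1)$-setting: the translations of $a$ and $t$ into underlying maps with chosen homotopies, and the assembly of the equivalences $(fh)o \simeq o$ and $(fh)s \simeq s(fh)$ into a single point of the Lawvere mapping space, must be carried out functorially. This is handled automatically once one uses that pullback along $o$ and $s$ is $(\infty,1)$-functorial and that the Lawvere universal property is a statement about contractibility of mapping spaces; no new induction or descent argument is required beyond what has already been developed.
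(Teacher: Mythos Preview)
Your proposal is correct and follows precisely the approach the paper has in mind: the paper does not spell out a proof but simply refers to Johnstone's Lemma A2.5.16, whose argument is exactly the one you give---unpack the slice data to a triple $(A,\tilde a,\tilde t)$, apply the Lawvere universal property to obtain $h$, and then use uniqueness for the triple $(\mathbb{N}_\E,o,s)$ to force $fh \simeq id_{\mathbb{N}_\E}$. Your remarks on coherence in the $(\infty,1)$-setting are appropriate and match the spirit of the paper's earlier discussion around Remark~\ref{Rem NNO Space}.
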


 Finally, notice that $\mathbb{N}_\E$ has in fact addition, multiplication, exponentiation and truncated subtraction structure 
  described in \cite[Example A2.5.4]{johnstone2002elephanti} that makes $\mathbb{N}_\E$ into a semi-ring.
 Moreover, following the argument in \cite[D4.7]{johnstone2002elephantsii} it has a group completion $\mathbb{Z}_\E$ which is in fact the free group on generator.
 
  Notice that $\mathbb{Z}_\E$ has an identity element $o: 1 \to \mathbb{Z}_\E$ and an addition map $(-)+1: \mathbb{Z}_\E \to \mathbb{Z}_\E$.
  The triple $(\mathbb{Z}_\E, o, (-)+1)$ satisfies the universal property of \cref{Prop Initial Loop Algebra}, 
  which gives us following corollary.

  \begin{corone}
   We have an isomorphism $\Omega S^1_\E \cong \mathbb{Z}_\E$.
  \end{corone}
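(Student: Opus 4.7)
The plan is to verify that the triple $(\mathbb{Z}_\E, o, (-)+1)$ is an initial object in the category $\E_{1_\E /} \times_\E \E^{S^1}$ of $1_\E$-pointed objects equipped with a self-equivalence. Since Proposition \ref{Prop Initial Loop Algebra} identifies $(\Omega S^1_\E, o, s)$ as such an initial object when $A = 1_\E$, uniqueness of initial objects will yield an equivalence $\mathbb{Z}_\E \simeq \Omega S^1_\E$. As $\Omega S^1_\E$ is $0$-truncated by Theorem \ref{The Loops Zero Truncated}, and $\mathbb{Z}_\E$ is by construction a quotient in the elementary topos $\tau_0 \E$, this equivalence automatically upgrades to an isomorphism.

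First I would check that $(-)+1 : \mathbb{Z}_\E \to \mathbb{Z}_\E$ is an equivalence, so that the triple really defines an object of $\E_{1_\E /} \times_\E \E^{S^1}$. This is a routine computation in $\tau_0 \E$: the candidate inverse $(-)-1$ is induced by the map $\mathbb{N}_\E \times \mathbb{N}_\E \to \mathbb{Z}_\E$ sending $(n,m)$ to the class of $(n, m+1)$, and the defining relation of $\mathbb{Z}_\E$ shows these two maps are mutually inverse.

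Next, given any object $(X, b, u)$ of $\E_{1_\E /} \times_\E \E^{S^1}$, I would construct the required map $\mathbb{Z}_\E \to X$ in two passes. Applying the universal property of the natural number object $\mathbb{N}_\E$ once to $(X, b, u)$ and once to $(X, b, u^{-1})$ yields unique maps $f_\pm : \mathbb{N}_\E \to X$ with $f_\pm \circ o = b$, $\; f_+ \circ s = u \circ f_+$, and $f_- \circ s = u^{-1} \circ f_-$. I would combine these into a map $\widetilde{g} : \mathbb{N}_\E \times \mathbb{N}_\E \to X$ (for example by setting $\widetilde{g}(n,m) = u^n(f_-(m))$, which recovers $f_+(n)$ at $m=0$) and then verify it factors through the quotient defining $\mathbb{Z}_\E$. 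The essential check reduces by induction on finite cardinals (Lemma \ref{Lemma Induction on Finites}) to the identity $\widetilde{g}(n+1, m+1) = \widetilde{g}(n,m)$, which is immediate from $u \circ u^{-1} = \mathrm{id}_X$. Uniqueness of the induced map $\mathbb{Z}_\E \to X$ then follows from two applications of the uniqueness clause of the natural number object property: any two candidate maps agree after restriction along the maps $\mathbb{N}_\E \to \mathbb{Z}_\E$ given by $n \mapsto [(n,0)]$ and $n \mapsto [(0,n)]$, hence agree on $\mathbb{Z}_\E$.

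The main obstacle I anticipate is the bookkeeping in the factorization step: one needs $\widetilde{g}$ to descend through the quotient in the $(\infty,1)$-categorical setting, where the equivalence relation must be interpreted coherently rather than as a mere set-theoretic identification. This difficulty is manageable because both $\mathbb{Z}_\E$ and the codomain-relevant data live inside the $0$-truncated subcategory $\tau_0 \E$, which is an honest elementary topos, so the descent of $\widetilde{g}$ is carried out by a standard induction argument in the style of \cite{Jo03} rather than a genuinely higher-categorical coherence computation.
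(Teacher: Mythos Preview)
Your overall strategy coincides with the paper's: the paper simply asserts that the triple $(\mathbb{Z}_\E, o, (-)+1)$ satisfies the universal property of Proposition~\ref{Prop Initial Loop Algebra} and deduces the isomorphism from uniqueness of initial objects. You supply the details the paper omits, and most of your sketch is sound.

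There is, however, one genuine slip in your final paragraph. You claim the factorization of $\widetilde{g}:\mathbb{N}_\E\times\mathbb{N}_\E\to X$ through the quotient $\mathbb{Z}_\E$ can be carried out in $\tau_0\E$ because ``the codomain-relevant data live inside $\tau_0\E$''. But the target $(X,b,u)$ of the universal property ranges over \emph{all} objects of $\E$ equipped with a point and a self-equivalence, not only $0$-truncated ones; and $\mathbb{Z}_\E$ is only defined as the quotient in $\tau_0\E$, which is \emph{not} the coequalizer of $R\rightrightarrows\mathbb{N}_\E\times\mathbb{N}_\E$ in $\E$ (already in spaces the homotopy coequalizer of an equivalence relation on a set has nontrivial $\pi_1$). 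So for general $X$ your descent argument does not go through as written.

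The repair is painless and keeps your argument intact. Rather than proving $\mathbb{Z}_\E$ is initial outright, use initiality of $\Omega S^1_\E$ to obtain $\alpha:\Omega S^1_\E\to\mathbb{Z}_\E$, and run your construction only with $X=\Omega S^1_\E$ --- which \emph{is} $0$-truncated by Theorem~\ref{The Loops Zero Truncated} --- to obtain $\beta:\mathbb{Z}_\E\to\Omega S^1_\E$. Then $\beta\alpha$ is a structure-preserving endomorphism of the initial object, hence the identity; and $\alpha\beta$ is an endomorphism of the $0$-truncated object $\mathbb{Z}_\E$, so your uniqueness argument via restriction along $n\mapsto[(n,0)]$ and $n\mapsto[(0,n)]$ (now legitimately inside $\tau_0\E$) finishes the job.
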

   
   Thus we have generalized the fact that the loop space of the circle is the free group on one generator to every elementary $(\infty,1)$-topos.  

 \subsection{Elementary \texorpdfstring{$(\infty,1)$}{(oo,1)}-Topos and Natural Number Objects} \label{Subsec Not every Elementary Topos lifts to an Elementary Higher Topos}
 In this subsection we introduce universes and use them to define elementary $(\infty,1)$-toposes. 
 Let $\E$ be an $(\infty,1)$-category that satisfies the conditions of \cref{Subsec Notation}.
 A map $p: \U_* \to \U$ is called a {\it universe} if the induced map of right fibrations
  $\E_{/\U} \to \O_\E$
  is an inclusion. Moreover, we say $\U$ is {\it closed} under (finite) limits, colimits and local Cartesian closure if the class of morphisms in image of the inclusion in $\O_\E$ are closed under (finite) limits,colimits and local Cartesian closure. 
 
  $\E$ is called an {\it elementary $(\infty,1)$-topos} if it has a collection of universes $p:\U_* \to \U$ closed under limits, colimits and local Cartesian closure such that the inclusions $\E_{/\U} \to \O_\E$ are jointly surjective.
  Examples include Grothendieck $(\infty,1)$-toposes \cite{lurie2009htt}, but also filter-product $(\infty,1)$-toposes \cite{rasekh2020filterquotient}. 
 
One important result about $(\infty,1)$-Grothendieck toposes is that every Grothendieck topos can be lifted to an $(\infty,1)$-Grothendieck topos
\cite[Section 11]{rezk2010toposes}. However, we can use the existence of natural number objects to show that this does not hold in the elementary setting.
 
  \begin{corone} \label{Cor FinSet not EHT}
  	If $\E$ is an elementary $(\infty,1)$-topos, then the underlying elementary topos has a natural number object. 
  	Hence, an elementary topos without natural number object (such as the category of finite sets) cannot be lifted to an elementary $(\infty,1)$-topos.
  \end{corone}

 \subsection{Infinite Colimits in an Elementary \texorpdfstring{$(\infty,1)$}{(oo,1)}-Topos} \label{Subsec Infinite Colimits in an Elementary Higher Topos}
 In general an elementary $(\infty,1)$-topos does not have infinite colimits. However, using natural number objects we can 
 find easy criteria for the existence of infinite colimits.
 First, recall that in a Grothendieck $(\infty,1)$-topos the object $\coprod_\mathbb{N} 1$ is a natural number object.
  In particular, in the $(\infty,1)$-category of spaces the set of natural numbers is a natural number object.
 
 We want to show that a similar result holds for an elementary $(\infty,1)$-topos with countable colimits.
 In fact, we want to show that this particular result implies the existence of countable colimits in every universe.
 We will need following Proposition from \cite{lurie2009htt}.
 
 \begin{propone}\cite[Proposition 4.4.2.6]{lurie2009htt}
  If $\E$ admits pushouts (pullbacks) and countable coproducts (products) then $\E$ admits colimits (limits) for all countable diagrams.
 \end{propone}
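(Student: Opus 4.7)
The plan is to use the standard technique of assembling colimits of countable diagrams from coproducts and coequalizers (here, pushouts), adapted to the $(\infty,1)$-categorical setting. First, I would observe that the hypotheses already give all \emph{finite} colimits: the initial object is the empty countable coproduct, finite coproducts arise from countable ones by padding with copies of the initial object, and a coequalizer of $f,g: X \rightrightarrows Y$ can be realized as the pushout of $Y \xleftarrow{(f,g)} X \amalg X \xrightarrow{\nabla} X$, so all finite colimits are available.

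Next I would construct sequential colimits from the ingredients at hand. Given a tower $X_0 \to X_1 \to X_2 \to \cdots$, form $A = \coprod_{n \in \mathbb{N}} X_n$ and consider the two maps $A \to A$ given by the identity and by the coproduct of the transition maps $X_n \to X_{n+1}$ (shifted by one). Using the coequalizer construction from the previous step (now on countably-indexed objects), I would identify the coequalizer of this parallel pair with the colimit of the tower. This is the $\infty$-categorical telescope construction, and its universal property can be checked diagram-by-diagram.

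For the general case, let $F: K \to \E$ be a diagram indexed by a countable simplicial set (or small $(\infty,1)$-category) $K$. I would use the skeletal filtration $K^{(0)} \subset K^{(1)} \subset \cdots$, noting that $K$ countable implies each $K^{(n)}$ has countably many non-degenerate $n$-simplices, and the inclusion $K^{(n-1)} \hookrightarrow K^{(n)}$ sits in a pushout square attaching $n$-cells along $\coprod \partial \Delta^n \hookrightarrow \coprod \Delta^n$. Inductively, $\mathrm{colim}(F|_{K^{(n)}})$ is then built from $\mathrm{colim}(F|_{K^{(n-1)}})$ by a single pushout along a countable coproduct, using that colimits over $\Delta^n$ are given by the final vertex (a finite colimit). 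Finally, $\mathrm{colim}(F) \simeq \mathrm{colim}_n \mathrm{colim}(F|_{K^{(n)}})$ by a cofinality argument, and this outer colimit is a sequential colimit handled by the previous paragraph.

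The main obstacle is making the skeletal reduction genuinely work in the $(\infty,1)$-categorical setting rather than just the $1$-categorical one: one must verify that the pushout presentation of $K^{(n)}$ from $K^{(n-1)}$ is preserved under the colimit functor $F \mapsto \mathrm{colim}\, F$ (left adjoints preserve colimits, but one needs the homotopical version of ``$\mathrm{colim}$ commutes with Bousfield--Kan type filtrations''), and that the cofinality of $\{K^{(n)}\}_n \hookrightarrow K$ induces the claimed equivalence $\mathrm{colim}\, F \simeq \mathrm{colim}_n \mathrm{colim}(F|_{K^{(n)}})$. This is the technical heart of Lurie's proof; the rest of the argument is a straightforward assembly.
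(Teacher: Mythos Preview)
The paper does not supply its own proof of this proposition; it is simply quoted from \cite[Proposition 4.4.2.6]{Lu09} and used as a black box in the subsequent argument about countable (co)limits in $\E$. There is therefore no paper proof to compare against. Your sketch is essentially the strategy Lurie uses in the cited reference: reduce to finite colimits plus sequential colimits via the skeletal filtration of the indexing simplicial set, build sequential colimits as coequalizers of countable coproducts, and handle the homotopy-coherence issues by appealing to cofinality and the fact that the colimit functor preserves the pushout squares attaching cells. The obstacles you flag (that the skeletal decomposition must be made to work $(\infty,1)$-categorically, and that one needs the cofinality of the skeletal tower) are exactly the points where Lurie invokes the machinery of Chapter~4 of \cite{Lu09}, so your identification of the technical heart is accurate.
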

 
 As we already know that $\E$ has all pushouts and pullbacks it thus suffices to prove it has countable (co)products 
 to show that we have countable (co)limits. 
 For the next result, for a given universe $\U$ we denote the class of morphisms in the image of the inclusion by $S$. 
 Moreover, for an object $X$, we use the notation $(\E_{/X})^S$ to denote the full subcategory of $\E_{/X}$ consisting of morphisms in $S$. 
 In particular, for $X = 1$, the final object, we use $\E^S$.
 
 \begin{propone}
  Let $\U^S$ be a universe in $\E$, classifying the class of maps $S$. Then $\E^S$ is closed under countable limits and colimits 
  if and only if $\coprod_{\mathbb{N}} 1_\E$ exists in $\E^S$. 
  Moreover, in that case $\mathbb{N}_\E = \coprod_{\mathbb{N}} 1_\E$ is the natural number object in $\E$.
 \end{propone}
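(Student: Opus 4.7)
The plan is to prove the biconditional in two steps and then identify $\coprod_\mathbb{N} 1_\E$ as the natural number object via the Freyd characterization established in the main theorem. For the easy direction, I would observe that since $1_\E$ is classified by $\U^S$, closure of $\U^S$ under countable coproducts immediately gives $\coprod_\mathbb{N} 1_\E$ in $\U^S$, and in particular in $\E$.

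For the harder direction, I would assume $N := \coprod_\mathbb{N} 1_\E$ exists and build countable (co)products from it. A countable family $\{X_n\}_{n \in \mathbb{N}}$ of objects classified by $\U^S$ is the same data as a single map $F : N \to \U^S$, via the universal property $Map(\coprod_\mathbb{N} 1_\E, \U^S) \simeq \prod_\mathbb{N} Map(1_\E, \U^S)$. By descent and the universe property, $F$ is classified by some $p : E \to N$ in $S$. The coproduct $\coprod_n X_n$ is then the composite $E \to N \to 1_\E$, which lies in $S$ by closure under composition; the product $\prod_n X_n$ is $(\pi_N)_* p$ for $\pi_N : N \to 1_\E$, and lies in $S$ provided $S$ is closed under this pushforward (which is available because $\E$ is locally Cartesian closed). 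Combined with the finite (co)limits already present in $\E$, the cited \cite[Proposition 4.4.2.6]{Lu09} then yields all countable (co)limits in $\U^S$.

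For the natural number object identification, I would let $o : 1_\E \to N$ be the first coproduct inclusion and $s : N \to N$ be the shift map induced by $n \mapsto n+1$. The two Freyd axioms of Definition \ref{Def Freyd NNO} follow immediately from the coproduct description: the pushout of $1_\E \leftarrow \emptyset \to N$ (with right map $s$ and left map to $1_\E$) yields $1_\E \sqcup N \cong N$ after reindexing by $n \mapsto n+1$, and the coequalizer of $(id, s) : N \rightrightarrows N$ identifies the $n$-th summand with the $(n{+}1)$-st, giving $1_\E$. Applying the equivalence of Freyd, Peano, and Lawvere natural number objects (Theorem \ref{The Freyd and Peano gives Lawvere} together with the corollary that immediately follows it), $N$ is a Lawvere natural number object in $\E$. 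I expect the main obstacle to be not the NNO identification, which reduces cleanly to the Freyd axioms, but rather verifying that the class $S$ is indeed closed under the pushforward $(\pi_N)_*$ used to form countable products within $\U^S$; this depends on the precise universe axioms in the paper's framework, and a careful accounting is needed to ensure that no hypothesis on $\U^S$ beyond those already assumed is required.
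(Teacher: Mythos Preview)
Your proposal is correct and follows essentially the same route as the paper: encode a countable family as a map $N \to \U^S$, obtain the coproduct by pulling back the universal family (using descent to verify the universal property), obtain the product as the pushforward along $N \to 1_\E$, and then verify the Freyd axioms for $(N,o,s)$ directly from the coproduct description. Your flagged concern about closure of $S$ under $(\pi_N)_*$ is legitimate but is equally unaddressed in the paper's own proof; it is implicitly relying on the standing universe axioms from \cite{Ra18b} (closure under composition and dependent product), so no genuinely new idea is missing from your outline.
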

 
 \begin{proof}
  If $\E$ has infinite colimits, then obviously $\coprod_{\mathbb{N}} 1_\E$ exists. On the other hand, let us assume $\coprod_{\mathbb{N}} 1_\E$ exists.
  Let $F: \mathbb{N} \to \E^S$ be a fixed diagram. Then $F(n)$ is an object in $\E^S$ which corresponds to a map $1_\E \to \U^S$. Using the fact that coproduct of $1_\E$ exists we 
  thus get a map $\hat{F}: \coprod_{\mathbb{N}} 1_\E \to \U^S$. 
  Now for each $n: 1_\E \to \coprod_{\mathbb{N}} 1_\E$ we have following diagram
  \begin{equation} \label{eq:coproduct}
   \begin{tikzcd}[row sep=0.25in, column sep=0.25in]
    F(n) \arrow[dr, phantom, "\ulcorner", very near start] \arrow[d] \arrow[r] & 
    C \arrow[dr, phantom, "\ulcorner", very near start]\arrow[d] \arrow[r] & \U^S_* \arrow[d] \\
    1_\E \arrow[r, "n"] & \coprod_{\mathbb{N}} 1_\E \arrow[r, "\hat{F}"] & \U^S 
   \end{tikzcd}.
  \end{equation}
  By descent $C$ is the coproduct of $F$.
  
  Now we also show that the diagram $F: \mathbb{N} \to \E^S$ has a product. For this part we first have to recall the following.
  The map $fi: \coprod_{\mathbb{N}} 1_\E \to 1_\E$ gives us following adjunction
   \adjun{\E}{\E_{/ \coprod_{\mathbb{N}} 1_\E}}{fi^*}{fi_*}.

  Let $C$ be the coproduct of $F: \mathbb{N} \to \E^S$ given in \cref{eq:coproduct} and notice it comes with a map $C \to \coprod_\mathbb{N} 1_\E$, which means it is an object in $\E_{/ \coprod_{\mathbb{N}} 1_\E}$. We will now prove that 
  that $fi_* C$ is the product of the diagram $F: \mathbb{N} \to \E^S$. Let $Y$ be any other object. By adjunction we have the equivalences
  $$\Map_\E(Y, fi_* C) \simeq \Map_{\E_{/ \coprod_{\mathbb{N}}1_\E}}( Y \times \coprod_{\mathbb{N}} 1_\E, C) \simeq \Map_{\E_{/ \coprod_{\mathbb{N}}1_\E}}(\coprod_{\mathbb{N}} Y, C).$$
  Recall that the descent condition gives us following equivalence (see \cref{Ex Cart over Final} and notice that $\mathbb{N}$ is already a groupoid)
  \begin{center}
   \begin{tikzcd}[row sep=0.2in, column sep=0.2in]
    (\E^S)_{/\coprod_{\mathbb{N}}1_\E} \arrow[rr, "\simeq"] \arrow[dr, "\pi"'] & & (\E^S)^{\mathbb{N}} \arrow[dl, "\colim"] \\
    & \E^S &
   \end{tikzcd}.
  \end{center}
  We have shown in \cref{eq:coproduct} that under this equivalence $C \to \coprod_{\mathbb{N}}1_\E$ corresponds to $F: \mathbb{N} \to \E^S$. 
  Let $G_Y: \mathbb{N} \to \E^S$ be the functor that corresponds to $\coprod_{\mathbb{N}}Y \to \coprod_{\mathbb{N}}1_\E$ and notice $G_Y$ is just the functor 
  with constant value $Y$. The equivalence of categories gives us an equivalence of mapping spaces 
  $ \Map_{\E_{/ \coprod_{\mathbb{N}}1_\E}}( \coprod_{\mathbb{N}} Y, C) \simeq \Map_{\E^{\mathbb{N}}}(G_Y, F).$
  However in $\E^{\mathbb{N}}$ the mapping space is just the product of the individual mapping spaces, which means we get
  $\Map_\E(G_Y, F) \simeq \prod_{\mathbb{N}} \Map_\E(Y,F(n)). $
  Hence, $\E^S$ also has products. 
  Thus, by \cite[Proposition 4.4.2.6]{lurie2009htt} it has all countable limits and colimits.
  \par 
  Finally, we want to prove that if $\coprod_{\mathbb{N}} 1_\E$ exists then it is the natural number object, with maps $s: \coprod_{\mathbb{N}} 1_\E \to \coprod_{\mathbb{N}} 1_\E$ induced by successor map 
  $\mathbb{N} \to \mathbb{N}$ and the map $o: 1_\E \to \coprod_{\mathbb{N}} 1_\E$ induced by the inclusion $\{1 \} \hookrightarrow \mathbb{N}$.
  We will prove that it is a Freyd natural number object. 
  From the fact that $\mathbb{N} = \{1\} \coprod \mathbb{N} \backslash \{1\}$ it immediately follows that 
  \begin{center}
   \begin{tikzcd}[row sep=0.25in, column sep=0.25in]
    \emptyset \arrow[r] \arrow[d] & \coprod_{\mathbb{N}} 1_\E \arrow[d] \\
    1_\E \arrow[r] & \coprod_{\mathbb{N}} 1_\E
   \end{tikzcd}
  \end{center}
  is a pushout square. 
  Moreover, the diagram 
  \begin{center}
   \begin{tikzcd}[row sep=0.25in, column sep=0.5in]
     \coprod_{\mathbb{N}} 1_\E \arrow[r, shift left=0.05in, "s"] \arrow[r, shift right=0.05in, "id"']& \coprod_{\mathbb{N}} \arrow[r] & 1_\E
   \end{tikzcd}
  \end{center}
  is a coequalizer diagram. Indeed, the diagram on the left hand side is just the colimit of the poset $1_\E: (\mathbb{N}, \leq) \to \E$ constantly valued at $1_\E$ 
  and this poset is a contractible and thus the colimit is just $1_\E$.
 \end{proof}

 \subsection{Internal Infinite Coproducts and Sequential Colimits} \label{Subsec Internal Infinite Coproducts and Sequential Colimits}
 In general, an elementary $(\infty,1)$-topos does not have infinite colimits and limits.
 However, the existence of a natural number object in an elementary $(\infty,1)$-topos implies that we can construct certain 
 structures that behave like infinite colimits without being external colimits. 
 For this subsection let $\E$ be an elementary $(\infty,1)$-topos and fix a universe $\U$ in $\E$. 
  In order to simplify notation we will use following conventions:
  \begin{center}
   \begin{tabular}{|c|c|}
    \hline 
    Previous Section & This Section \\ \hline 
    $1_\E$ & $1$ \\ \hline 
    $\mathbb{N}_\E$ & $\mathbb{N}$ \\ \hline 
    $o: 1_\E \to \mathbb{N}_\E$ & $0: \mathbb{N}$ \\ \hline 
    $p: 1_\E \to \mathbb{N}_\E$ & $n:\mathbb{N}$ \\ \hline 
    $sp: 1_\E \to \mathbb{N}_\E$ & $n+1 :\mathbb{N}$ \\ \hline
   \end{tabular}
  \end{center}

  A sequence of objects $\{ A_n \}_{n: \mathbb{N}}$ is a map $\{ A_n \}_{n: \mathbb{N}} : \mathbb{N} \to \U$.
  For a given sequence of objects $\{ A_n \}_{n:\mathbb{N}}$, we define the {\it internal coproduct}, $\ds \sum_{n: \mathbb{N}} A_n$ as the pullback
  \begin{center}
   \pbsq{\ds \sum_{n: \mathbb{N}} A_n }{\U_*}{\mathbb{N}}{\U}{}{p_A}{}{\{ A_n \}_{n: \mathbb{N}}}
  \end{center}
  Let $n: \mathbb{N}$. Then we denote the fiber by $A_n$ and think of it as the ``$n$-th object" in the sequence.
  In particular the first fiber is $A_0$ and we have a sequence $A_0, A_{1}, A_{2}, ... , $
  which justifies calling a map $\mathbb{N} \to \U$ a sequence. 

 It's interesting to see examples of internal coproducts.
  Recall that by definition of a universe, a map $1 \to \U$ corresponds to an object in $\E$. 
  Let $1$ be the constant final sequence $A_n = 1$. This means that it is the map $\mathbb{N} \to \U$ that factors 
  through the constant map $1 \to \U$ that classifies the object $1$. 
  Now, we want to prove that the internal coproduct, $\sum_{n: \mathbb{N}} 1$, is simply equivalent to $\mathbb{N}$.
  We have the diagram 
  \begin{center}
   \begin{tikzcd}[row sep=0.25in, column sep=0.4in]
     \ds \sum_{n: \mathbb{N}} 1 \arrow [d, "p_{1}"', "\simeq"] \arrow[r] \arrow[dr, phantom, "\ulcorner", very near start] & 
     1 \arrow[r] \arrow[d] \arrow[dr, phantom, "\ulcorner", very near start] & \U_* \arrow[d] \\
     \mathbb{N} \arrow[r] & 1 \arrow[r] & \U
   \end{tikzcd}.
  \end{center}
  which gives us the equivalence $\sum_{n: \mathbb{N}} 1 \simeq \mathbb{N}$. 
  In fact we can easily generalize this.
  
  \begin{lemone} \label{lemma:coprod constant}
  	Let $X$ be an object in $\E$ and let $X: 1 \to \U$
  	be the map classifying $X$. Then the map $\mathbb{N} \to \U$ that factors through $1$ has pullback
  	$\mathbb{N} \times X$ which means $\sum_{n: \mathbb{N}} X \simeq X \times \mathbb{N}$.
  \end{lemone} 
 
 It is valuable to notice that the internal coproduct is homotopy invariant.
  Let $\{ A_n \}_{n:\mathbb{N}}$ and $\{B_n \}_{n: \mathbb{N}}$ be
  two sequences of objects such that $\{ A_n \}_{n:\mathbb{N}} \simeq \{B_n \}_{n: \mathbb{N}}$. 
  Then $\sum_{n: \mathbb{N}} A_n \simeq \sum_{n: \mathbb{N}} B_n$.
 This follows immediately from the homotopy invariance of the pullback and coequalizer.

 We now want to look at another class of infinite internal colimits, {\it sequential colimits}.
 A {\it sequential diagram} $\{ f_n: A_n \to A_{n+1} \}_{n: \mathbb{N}}$ is a sequence of objects 
  $\{ A_n \}_{n: \mathbb{N}}: \mathbb{N} \to \U$ as well as a choice of map
  \begin{center}
   \begin{tikzcd}[row sep=0.25in, column sep=0.25in]
    \ds \sum_{n: \mathbb{N}} A_n \arrow[rr, "\{ f_n \}_{n:\mathbb{N}}"] \arrow[dr] & & \ds \sum_{n: \mathbb{N}} A_{n+1} \arrow[dl] \\ 
     & \mathbb{N} & 
   \end{tikzcd}.
  \end{center} 
  For any $n:\mathbb{N}$ we get a map $f_n: A_n \to A_{n+1}$.
  Thus we will use following notation for a sequential diagram
  $$A_0 \xrightarrow{ \ \ f_0 \ \ } A_1 \xrightarrow{ \ \ f_1 \ \ } A_2 \xrightarrow{ \ \ f_2 \ \ }... \ . $$
 
  Let $\{ f_n \}_{n: \mathbb{N}}$ be a sequential diagram of the sequence of objects $A$. Then the {\it sequential colimit} of $f$ 
  is the coequalizer 
  \begin{center}
    \begin{tikzcd}[row sep=0.5in, column sep=0.5in]
    \ds \sum_{n: \mathbb{N}} A_n \arrow[r, shift left=0.05in, "f"] 
      \arrow[r, shift right=0.05in, "id_{ \sum_{n:\mathbb{N}} A_n }"'] & 
    \ds \sum_{n: \mathbb{N}} A_n \arrow[r]  & 
    A_{\infty}
   \end{tikzcd}.
  \end{center}
 
 Let us compute some examples.
   Let $X$ be an object in $\E$. Then we showed in \cref{lemma:coprod constant} that the constant coproduct on $X$ is $\mathbb{N} \times X$.
  The sequential diagram that correspond to the sequence 
  $$X \xrightarrow{ \ \ id_X \ \ } X \xrightarrow{ \ \ id_X \ \ } X \xrightarrow{ \ \ id_X \ \ } \cdots $$
  is simply the map $s \times id_X : \mathbb{N} \times X \to \mathbb{N} \times X$.
  Thus the sequential colimit of the constant sequence is the coequalizer of the diagram 
  \begin{center}
    \begin{tikzcd}[row sep=0.5in, column sep=0.5in]
    \mathbb{N} \times X  \arrow[r, shift left=0.05in, "s \times id_X"] 
      \arrow[r, shift right=0.05in, "id"'] & 
    \mathbb{N} \times X \arrow[r]  & 
    X_{\infty}
   \end{tikzcd}.
  \end{center}
  which, by the colimit condition of Freyd natural number objects \cref{Def Freyd NNO}, is simply $1 \times X \simeq X$.
 
 Notice we can recover infinite coproducts using sequential colimits. For that we first need the appropriate construction.
   
   \begin{propone}
    Let $X$ be an object in $\E$. The sequential colimit of the sequential diagram
    $$X \xrightarrow{ \ \ \iota_1 \ \ } X \coprod X \xrightarrow{ \ \ \iota_1 \ \ } (X \coprod X) \coprod X ... $$
    (defined below)
    is the infinite coproduct $\ds \sum_{n: \mathbb{N}} X = X \times \mathbb{N}$.
   \end{propone}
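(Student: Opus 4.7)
The plan is to identify the sequential colimit as the coequalizer of two maps on $X \times \mathbb{N}_1$, recognize $\mathbb{N}_1 \to \mathbb{N}$ as the natural number object in the slice $\E_{/\mathbb{N}}$ so that the Freyd axiom applies there, and then transfer the resulting coequalizer along $X \times -$, which preserves colimits because $\E$ is locally Cartesian closed.

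First I would identify $A_n \simeq X \times [sn]$, where $[sn]$ denotes the non-empty finite cardinal of size $n+1$ (so $A_0 = X$, $A_1 = X \coprod X$, etc.); under this identification $\iota_1 : A_n \to A_{n+1}$ is $id_X \times inc$, with $inc : [sn] \hookrightarrow [s(sn)] = [sn] \coprod 1$ the inclusion onto the first $n+1$ elements. Since the generic non-empty finite cardinal $\pi_2 : \mathbb{N}_1 \to \mathbb{N}$ has fiber $[sn]$ over $n$, this yields $\sum_{n:\mathbb{N}} A_n \simeq X \times \mathbb{N}_1$ with structure map induced by $\pi_2$. Unwinding the definition of the shift map of a sequential diagram (which involves the pullback presentation of $\sum_n A_{n+1}$ along $s : \mathbb{N} \to \mathbb{N}$), one finds that the induced map $f : \sum_n A_n \to \sum_n A_n$ equals $id_X \times \widetilde{s}$, where $\widetilde{s} : \mathbb{N}_1 \to \mathbb{N}_1$ is the restriction of $id_\mathbb{N} \times s : \mathbb{N} \times \mathbb{N} \to \mathbb{N} \times \mathbb{N}$, well-defined since $m \leq n$ implies $m \leq n+1$. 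Both $id_{\mathbb{N}_1}$ and $\widetilde{s}$ lie over the other projection $\pi_1 : \mathbb{N}_1 \to \mathbb{N}$, so the coequalizer in question can be computed in $\E_{/\mathbb{N}}$.

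The main step is to recognize $(\mathbb{N}_1 \to \mathbb{N}, d, \widetilde{s})$ as a natural number object in $\E_{/\mathbb{N}}$, where $d : \mathbb{N} \to \mathbb{N}_1$ is the diagonal section $i \mapsto (i,i)$. Truncated subtraction furnishes an isomorphism $\mathbb{N}_1 \simeq \mathbb{N} \times \mathbb{N}$ in $\E_{/\mathbb{N}}$ via $(i, n) \mapsto (i, n \dot{-} i)$, under which $d$ corresponds to $(id_\mathbb{N}, o)$ and $\widetilde{s}$ to $id_\mathbb{N} \times s$; this is precisely the basechange of the natural number object $(\mathbb{N}, o, s)$ of $\E$ along $\mathbb{N} \to 1_\E$. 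Because $\E$ is an elementary higher topos, basechange is exact and preserves all finite limits and colimits; in particular it preserves the defining coequalizer and pushout diagrams of a Freyd natural number object, so the basechange is again a Freyd natural number object in $\E_{/\mathbb{N}}$. Consequently $\pi_1 : \mathbb{N}_1 \to \mathbb{N}$ is a coequalizer of $id_{\mathbb{N}_1}$ and $\widetilde{s}$ in $\E_{/\mathbb{N}}$, and therefore also in $\E$ (since the forgetful functor $\E_{/\mathbb{N}} \to \E$ preserves colimits).

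Finally, since $X \times - : \E \to \E$ is a left adjoint, it preserves all colimits; applying it to the above yields a coequalizer $X \times \mathbb{N}_1 \rightrightarrows X \times \mathbb{N}_1 \to X \times \mathbb{N}$ with parallel maps $id$ and $id_X \times \widetilde{s}$, which is exactly the defining coequalizer of $A_\infty$. Hence $A_\infty \simeq X \times \mathbb{N} \simeq \sum_{n:\mathbb{N}} X$, as claimed. The principal technical obstacle I anticipate is the careful bookkeeping needed to turn the abstract definition of the shift map (built from the pullback diagram defining $\sum_n A_{n+1}$) into the explicit formula $id_X \times \widetilde{s}$, together with setting up the identifications $\sum_n A_n \simeq X \times \mathbb{N}_1$ and $\mathbb{N}_1 \simeq \mathbb{N} \times \mathbb{N}$ in the slice; once this is in place, the application of the Freyd axiom is essentially formal.
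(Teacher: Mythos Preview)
Your argument is correct and shares the paper's overall shape: both identify $\sum_{n}A_n$ with $X\times\mathbb{N}_1$ and reduce the sequential colimit to computing the coequalizer of $id$ and $id\times s$ on $\mathbb{N}_1$. The genuine difference is in how that coequalizer is evaluated. The paper keeps the inclusion $\mathbb{N}_1\hookrightarrow\mathbb{N}\times\mathbb{N}$, computes the coequalizer of the ambient $\mathbb{N}\times\mathbb{N}$ as $\mathbb{N}$ via the Freyd axiom, asserts that the induced map $Fin_\infty\to\mathbb{N}$ is a monomorphism because the comparison of parallel pairs is an inclusion, and concludes by noting it admits a section. You instead use truncated subtraction and addition to produce an explicit isomorphism $\mathbb{N}_1\cong\mathbb{N}\times\mathbb{N}$ over $\pi_1$, under which $\widetilde{s}$ becomes $id_\mathbb{N}\times s$ and the diagonal becomes $(id_\mathbb{N},o)$; then the Freyd axiom, pulled back along $\mathbb{N}\to 1$, applies directly in $\E_{/\mathbb{N}}$. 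Your route is arguably cleaner on two counts: it avoids the step ``mono between coequalizer diagrams yields mono on coequalizers,'' which the paper invokes without justification, and it handles the passage to general $X$ uniformly via a single application of the colimit-preserving functor $X\times(-)$ rather than by rerunning the construction with $-\coprod X$ in place of $-\coprod 1$. The paper's approach, on the other hand, stays closer to the raw Freyd datum and does not need to invoke the internal arithmetic identities $(i+m)\,\dot{-}\,i=m$ and $i+(n\,\dot{-}\,i)=n$ for $i\le n$ that underlie your isomorphism.
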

   
   \begin{proof}
   To simplify notation we will first prove the result for $X = 1$.
   First we will construct the sequence described in the proposition.
   Let $ - \coprod 1 : \U \to \U$   
   be the map of universes that corresponds to map $\E \to \E$ that takes an object 
   $Y$ to $Y \coprod 1$. This gives maps 
   $$ 1 \xrightarrow{ \ \ \emptyset \ \ } \U \xrightarrow{ \ \  - \coprod 1 \ \ } \U.$$
   By the property of the natural number object, we thus get a map $Fin_n: \mathbb{N} \to \U$, 
   which gives us a map $p_{Fin}: \sum_{n: \mathbb{N}} Fin_n \to \mathbb{N}$
   \par
   By its definition we have  $0^*(p_{Fin}) = \emptyset$ and $(n+1)^*(p_{Fin}) = n^*(p_{Fin}) \coprod 1$ for all $n: \mathbb{N}$. 
   This implies that $\sum_{n: \mathbb{N}} Fin_n \cong \mathbb{N}_1$ and 
   $p_{Fin} = s\pi_2$. In other words the infinite coproduct is the generic finite cardinal discussed before. 
   The map $\mathbb{N}_1 \to \mathbb{N}_1$ stated in the proposition corresponds to the map 
   $id \times s: \mathbb{N}_1 \to \mathbb{N}_1$, which is just the restriction of the
   map $id_{\mathbb{N}} \times s: \mathbb{N} \times \mathbb{N} \to \mathbb{N} \times \mathbb{N}$. 
   Indeed, the fact that the map $id \times s$ restricts just reflects the elementary fact that $n \dot - m = 0$ implies $n \dot - (m+1) = 0$.
   \par 
   Up to here we constructed a sequence of objects that we showed is equivalent to $\mathbb{N}_1$ and a 
   sequence of maps $id_{\mathbb{N}} \times s$. Intuitively it is the sequence of finite cardinals, where at each step
   we add one more element. We want to find the sequential colimit of this sequence. 
   We have following diagram
   \begin{center}
    \begin{tikzcd}[row sep=0.1in, column sep=0.25in]
     & \mathbb{N}_1 \arrow[r, shift left=0.05in, "id \times s"] 
       \arrow[r, shift right=0.05in, "id"'] \arrow[dd, hookrightarrow] & \mathbb{N}_1 \arrow[dd, hookrightarrow] \arrow[r] & 
     Fin_{\infty} \arrow[dd, hookrightarrow]\\
     \mathbb{N} \arrow[rrrd, "id" description, bend left=20] \arrow[dr, "o \times id"] \arrow[ur, "o \times id"] \\
     & \mathbb{N} \times \mathbb{N} \arrow[r, shift left=0.05in, "id \times s"] \arrow[r, shift right=0.05in, "id"'] 
     & \mathbb{N} \times \mathbb{N} \arrow[r] & \mathbb{N}
    \end{tikzcd}.
  \end{center}
   
   As $\mathbb{N}_1$ is a subobject of $\mathbb{N} \times \mathbb{N}$ we know that $Fin_{\infty}$ is a 
   subobject of $\mathbb{N}$. On the other hand it receives a map from the maximal subobject $\mathbb{N}$ and thus must be 
   the maximal subobject which implies that $Fin_{\infty} \cong \mathbb{N}$.
   \par 
   Thus we conclude that $\mathbb{N}$ is the sequential colimit of successive finite cardinals internal to $\E$.
   We can now generalize this result to any object $X$ by simply using the argument made before for the map 
   $$ 1 \xrightarrow{ \ \ \emptyset \ \ } \U \xrightarrow{ \ \  - \coprod X \ \ } \U . \qedhere$$
   \end{proof}
   
    There is an alternative way to construct the sequential diagram  
     $$X \xrightarrow{ \ \ \iota_1 \ \ } X \coprod X \xrightarrow{ \ \ \iota_1 \ \ } (X \coprod X) \coprod X \xrightarrow{ \ \ \iota_1 \ \ } \cdots \ .$$
    However, for that we have to first review some concepts.

     A complete Segal universe $\U_\bullet$ is a simplicial object in $\E$ that represents the target fibration 
     from the arrow category $target: Arr(\E) \to \E$.
     We have proven in \cite[Theorem 3.15]{rasekh2018elementarytopos} that every universe $\U$ can be extended to a complete Segal universe $\U_\bullet$.
     As before, a point $1 \to \U_0$ corresponds to an object $A$ in $\E$. Moreover, a point $1 \to \U_1$ corresponds to a morphism $f$.
     \par 
     As $\U_\bullet$ is a simplicial object it comes with a map $(s,t): \U_1 \to \U_0 \times \U_0$. It takes a morphism 
     $f: A \to B$ to the source and target 
     $(A,B)$. Thus in order to find the source and target of a morphism we simply apply $(s,t)$.
 
   Having reviewed complete Segal universes, we can use them to build the sequential diagram in a different way.
    We want to build a sequential diagram by constructing a map from the natural number object to $\U_1$.
    For that we construct a map $\U_1 \to \U_1$, which means we have to build an endofunctor of the arrow category. 
    Let $- \coprod id_X : \U_1 \to \U_1$ be the map that corresponds to the functor that takes a morphism $g:A \to B$ to the morphism 
    $g \coprod id_X: A \coprod X \to B \coprod X$. 
    \par 
    By initiality, the map $(\emptyset \to X): 1 \to \U_1$ and $[ - \coprod id_X]: \U_1 \to \U_1$ gives us a map 
    $ (\coprod_n X \to \coprod_{n+1} X) : \mathbb{N} \to \U_1$.
    As $\U_1$ classifies morphisms (by the previous remark) the map $p: \mathbb{N} \to \U_1$ classifies a commutative triangle
    \begin{center}
     \begin{tikzcd}[row sep=0.25in, column sep=0.25in]
       F_0 \arrow[dr] \arrow[rr, "f"] & & F_1 \arrow[dl] \\
       & \mathbb{N} &
     \end{tikzcd}
    \end{center}
    We have to determine $F_0$, $F_1$ and $f$. By definition, $F_0 \to \mathbb{N}$ and $F_1 \to \mathbb{N}$ are classified by
    $sp: \mathbb{N} \to \U_0$, $tp: \mathbb{N} \to \U_0$, where $(s,t): \U_1 \to \U_0 \times \U_0$ is the source-target map
    from the previous remark.
    \par 
    Thus by construction, $F_0 = \sum_{n:\mathbb{N}} (\coprod_n X)$ and $F_1 = \sum_{n:\mathbb{N}} (\coprod_{n+1} X)$.
    Moreover, the fiber of $f$ over $n: \mathbb{N}$ corresponds to the map $\iota_1: \coprod_n X \to \coprod_{n+1} X = (\coprod_n X) \coprod X$.
    Thus the map $\mathbb{N} \to \U_1$ gives us the desired sequential diagram
     $$X \xrightarrow{ \ \ \iota_1 \ \ } X \coprod X \xrightarrow{ \ \ \iota_1 \ \ } (X \coprod X) \coprod X \xrightarrow{ \ \ \iota_1 \ \ } \cdots \ .$$

   Similar to the coproduct case we also have a homotopy invariance and functoriality for sequential colimits.
    Let ($A_n$,$f_n$) and ($B_n$,$g_n$) be two sequential diagrams. A {\it natural transformation} between the diagrams is 
    a commutative diagram 
    \begin{center}
     \comsq{\sum_{n: \mathbb{N}} A_n}{\sum_{n: \mathbb{N}} A_n}{\sum_{n: \mathbb{N}} B_n}{\sum_{n: \mathbb{N}} A_n}{
     \{ f_n \}_{n: \mathbb{N}}}{F}{F}{\{ g_n \}_{n: \mathbb{N}}}.
    \end{center}
    We will usually denote a natural transformation as a map $F: \sum_{n: \mathbb{N}} A_n \to \sum_{n: \mathbb{N}} B_n$ in 
    order to simplify notation.
   
    Let ($A_n$,$f_n$) and ($B_n$,$g_n$) be two sequential diagrams. A natural transformation $F$
    induces a map of colimits $F_{\infty}: A_{\infty} \to B_\infty$. Moreover, if $F$ is an equivalence 
    then $F_{\infty}$ is an equivalence.
   Finally, we can also prove a cofinality result for sequential colimits. 
   
   \begin{theone}
    Let $\{ f_n: A_n \to A_{n+1} \}_{n: \mathbb{N}}$ be a sequential diagram. Then $\{f_n \}_{n: \mathbb{N}}$ 
    has the same sequential colimit as $\{f_{n+1} \}_{n: \mathbb{N}}$.
   \end{theone}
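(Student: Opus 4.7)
The plan is to exhibit mutually inverse maps $\phi:A_\infty \to A'_\infty$ and $\psi: A'_\infty \to A_\infty$ using the universal properties of the two coequalizers and the compatibility between the shift on $\mathbb{N}$ and the structure maps $\{f_n\}$.

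The first step is to set up the ambient shift morphism. The sequence $\{A_{n+1}\}_{n:\mathbb{N}}$ is the composite $\{A_n\}_{n:\mathbb{N}} \circ s$ for the successor $s:\mathbb{N}\to\mathbb{N}$, so the defining pullback square for $\sum_{n:\mathbb{N}} A_{n+1}$ factors through the one for $\sum_{n:\mathbb{N}} A_n$, yielding a canonical map $\sigma:\sum_{n:\mathbb{N}} A_{n+1} \to \sum_{n:\mathbb{N}} A_n$ that places the fiber $A_{n+1}$ over the position $n+1$. Dually, $\{f_n\}:\sum_{n:\mathbb{N}} A_n \to \sum_{n:\mathbb{N}} A_{n+1}$ is already given. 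I record that the map $f$ appearing in the coequalizer defining $A_\infty$ is nothing but $\sigma\circ\{f_n\}$, and similarly the structure map $f'$ defining $A'_\infty$ is $\sigma'\circ\{f_{n+1}\}$ for the corresponding shift $\sigma'$ of the once-shifted sequence.

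Next I construct $\psi:A'_\infty \to A_\infty$ by composing $\sigma$ with the quotient $q:\sum_{n:\mathbb{N}} A_n \to A_\infty$; the task is to check that $q\circ\sigma$ coequalizes $\mathrm{id}$ and $f'$ on $\sum_{n:\mathbb{N}} A_{n+1}$. Unwinding, this reduces to the equality $q\circ\sigma\circ\sigma'\circ\{f_{n+1}\} \simeq q\circ\sigma$, which follows from the coequalizer relation $q\circ f \simeq q$ already present in $A_\infty$, after identifying $\sigma\circ\sigma'\circ\{f_{n+1}\}$ with $f\circ\sigma$ via a direct fiberwise comparison over $\mathbb{N}$. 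I then construct $\phi:A_\infty \to A'_\infty$ symmetrically, by composing $\{f_n\}$ with the quotient $q':\sum_{n:\mathbb{N}} A_{n+1} \to A'_\infty$ and verifying that $q'\circ\{f_n\}$ coequalizes $\mathrm{id}$ and $f$ on $\sum_{n:\mathbb{N}} A_n$; this reduces to the relation $q'\circ f' \simeq q'$ in $A'_\infty$.

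Finally I verify the compositions. The composite $\psi\circ\phi$, when precomposed with $q$, equals $q\circ\sigma\circ\{f_n\} = q\circ f \simeq q$, so by uniqueness in the universal property of $q$ we get $\psi\circ\phi \simeq \mathrm{id}_{A_\infty}$; the composite $\phi\circ\psi$, when precomposed with $q'$, equals $q'\circ\{f_n\}\circ\sigma \simeq q'\circ f' \simeq q'$ by a similar identification, giving $\phi\circ\psi \simeq \mathrm{id}_{A'_\infty}$. The main obstacle is the internal bookkeeping of the shift: one has to make precise, in terms of pullbacks over $\mathbb{N}$, the identity $\sigma\circ\sigma'\circ\{f_{n+1}\} \simeq f\circ\sigma$ and its analogue $\{f_n\}\circ\sigma \simeq \sigma'\circ\{f_{n+1}\}$, which is purely a matter of comparing the two ways of reindexing the fibers of the universal sequence $\{A_n\}$ along $s$; once phrased as a pullback calculation in $\E_{/\mathbb{N}}$, it is direct but requires the care already exemplified in the earlier manipulations of $\mathbb{N}_1$.
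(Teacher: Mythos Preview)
Your argument is correct and takes a genuinely different route from the paper. The paper exploits the Freyd decomposition $(o,s):1\coprod\mathbb{N}\xrightarrow{\ \cong\ }\mathbb{N}$ to split $\sum_{n:\mathbb{N}}A_n$ as $A_0\coprod\sum_{n:\mathbb{N}}A_{n+1}$, then rewrites the defining coequalizer of $A_\infty$ as a pushout of three smaller coequalizers; commuting colimits collapses this to a pushout of $A_{\infty+1}$ along $\mathrm{id}_{A_1}$, which is visibly an equivalence. Your approach instead builds explicit inverse maps $\phi,\psi$ from the universal property of each coequalizer, reducing everything to the single reindexing identity $\{f_n\}\circ\sigma\simeq\sigma'\circ\{f_{n+1}\}$ (which also gives your other identity $\sigma\circ\sigma'\circ\{f_{n+1}\}\simeq f\circ\sigma$ by postcomposing with $\sigma$). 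Your method is more elementary in that it avoids both the Freyd splitting and the colimit-commutation step; the paper's method is more structural, making transparent \emph{why} the extra $A_0$ disappears. One small point worth making explicit in your write-up: in the $(\infty,1)$-setting the ``uniqueness'' you invoke for $\psi\circ\phi\simeq\mathrm{id}$ and $\phi\circ\psi\simeq\mathrm{id}$ is the statement that the space of factorizations through the coequalizer is contractible, so you should remark that the homotopies $\psi\circ\phi\circ q\simeq q$ and $\phi\circ\psi\circ q'\simeq q'$ you produce are compatible with the coequalizing data; this is routine but is where the coherence of your reindexing identity actually gets used.
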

   
   \begin{proof}
    For the purpose of this proof we denote the sequential colimit of $f_{n+1}$ by $A_{\infty + 1}$.
    Recall that we have an isomorphism $(o,s): 1 \coprod \mathbb{N} \xrightarrow{ \ \cong \ } \mathbb{N}$. 
    Pulling it back gives us an isomorphism $A_0 \coprod \sum_{n: \mathbb{N}} A_{n+1} \xrightarrow{ \ \cong \ } \sum_{n: \mathbb{N}} A_{n}$. 
    This gives us a coequalizer diagram
    \begin{center}
     \begin{tikzcd}[row sep=0.5in, column sep=0.5in]
      \ds A_0 \coprod \sum_{n: \mathbb{N}} A_{n+1} \arrow[r, shift left=0.05in, "f_0 \coprod \{ f_{n+1} \}_{n:\mathbb{N}}"] 
      \arrow[r, shift right=0.05in, "id"'] & 
      \ds A_0 \coprod \sum_{n: \mathbb{N}} A_{n+1} \arrow[r]  & 
      A_{\infty}
     \end{tikzcd}.
   \end{center}
    However, we also know that the fiber of $\sum_{n: \mathbb{N}} A_{n+1}$ over $0$ is $A_{1}$
    which means 
    $$A_0 \coprod \sum_{n: \mathbb{N}} A_{n+1} \cong A_0 \coprod A_{1} \coprod_{A_{1}} \sum_{n: \mathbb{N}} A_{n+1}.$$
    We can thus rephrase the diagram in the following form.
    \begin{center}
     \begin{tikzcd}[row sep=0.5in, column sep=0.5in]
      \ds A_0 \coprod_{\emptyset} \sum_{n: \mathbb{N}} A_{n+1} \arrow[r, shift left=0.05in, "f_0 \coprod \{ f_{n+1} \}_{n:\mathbb{N}}"] 
      \arrow[r, shift right=0.05in, "id"'] & 
      \ds (A_0 \coprod A_{1}) \coprod_{A_{1}} \sum_{n: \mathbb{N}} A_{n+1} \arrow[r]  & 
      A_{\infty}
     \end{tikzcd}.
   \end{center}
   Thus the coequalizer diagram is a pushout of three coequalizer diagrams. Using the fact that colimit diagrams commute
   the diagram is thus equivalent to the pushout diagram 
   \begin{center}
    \begin{tikzcd}[row sep=0.25in, column sep=0.25in]
     A_1  \arrow[r] \arrow[d, "id_{A_1}"'] 
     & A_{\infty + 1} \arrow[d, "\simeq"] \\
     A_1 \arrow[r] 
     & A_{\infty} \arrow[ul, phantom, "\ulcorner", very near start]
    \end{tikzcd}.
   \end{center}
    This gives us the desired result that $A_{\infty + 1} \to A_\infty$ is an equivalence which finishes the proof.
   \end{proof}
   
   We can use the theorem $m$ times to get following corollary.
   
   \begin{corone}
    Let $\{ f_n \}_{n:\mathbb{N}}$ be a sequential diagram. Then $\{ f_n \}_{n:\mathbb{N}}$ has the same  sequential colimit as 
    $\{ f_{m+n} \}_{n:\mathbb{N}}$. 
   \end{corone}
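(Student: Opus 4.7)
The plan is to prove the corollary by external induction on the natural number $m$, using the preceding Theorem as the inductive engine. The base case $m = 0$ is immediate, because $\{f_{0+n}\}_{n:\mathbb{N}}$ agrees on the nose with $\{f_n\}_{n:\mathbb{N}}$ as sequential diagrams.

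For the inductive step I would assume the corollary for some fixed $m$, so that $A_\infty$ is equivalent to the sequential colimit of $\{f_{m+n}\}_{n:\mathbb{N}}$. I would then regard this shifted family as a sequential diagram in its own right, setting $g_n := f_{m+n}$ with $g_n : A_{m+n} \to A_{m+n+1}$, and apply the preceding Theorem to $\{g_n\}_{n:\mathbb{N}}$. This produces an equivalence between the colimit of $\{g_n\}_{n:\mathbb{N}}$ and the colimit of $\{g_{n+1}\}_{n:\mathbb{N}} = \{f_{(m+1)+n}\}_{n:\mathbb{N}}$. Composing this equivalence with the inductive hypothesis yields the conclusion for $m+1$.

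The only step requiring any care is the reindexing, i.e. recognizing $\{f_{m+n}\}_{n:\mathbb{N}}$ as an honest sequential diagram in the sense of the paper. Concretely, one must precompose the classifying map $\mathbb{N} \to \U_1$ of the sequence $\{f_n\}_{n:\mathbb{N}}$ with the addition-by-$m$ map $(m+-) : \mathbb{N} \to \mathbb{N}$, whose existence is guaranteed by the additive structure on $\mathbb{N}$ discussed in Subsection \ref{Subsec Additional Properties of Natural Number Objects}. Pulling back the diagrams $\sum_{n:\mathbb{N}} A_n$ and $\sum_{n:\mathbb{N}} A_{n+1}$ along $(m+-)$ and restricting the natural transformation $\{f_n\}$ accordingly produces the shifted sequential diagram. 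Once this bookkeeping is in place, the induction is a direct chain of equivalences, and I expect the main (mild) obstacle to be verifying that iterated single shifts coincide with the single $m$-fold shift, which boils down to the equality $m + (n+1) = (m+1) + n$ in $\mathbb{N}$ and is just an instance of associativity of addition on the natural number object.
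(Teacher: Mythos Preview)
Your proposal is correct and follows essentially the same approach as the paper: the paper simply says ``We can use the theorem $m$ times to get following corollary,'' which is precisely the external induction on $m$ you describe. Your additional care about the reindexing via the map $(m+-):\mathbb{N}\to\mathbb{N}$ is extra detail the paper leaves implicit, but it does not constitute a different strategy.
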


 Using sequential colimits we can define infinite compositions.
  Let $\{ f_n:A_n \to A_{n+1} \}_{n:\mathbb{N}}$ be a sequential diagram. Then we get a natural transformation 
  $F: \sum_{n: \mathbb{N}} A_0 \to \sum_{n: \mathbb{N}} A_n$, where $ \sum_{n: \mathbb{N}} A_0 = A_0 \times \mathbb{N}$ is the constant sequence.
  This gives us a map of colimits $F_{\infty}: A_0 \to A_{\infty}$, as $A_0$ is the sequential colimit of the constant diagram $\mathbb{N} \times A_0$.
  Define the {\it infinite composition} $f_{\infty}: A_0 \to A_{\infty}$ as the map  $F_\infty$.

  The result of this subsection is that we can define various infinite colimits internally in an elementary $(\infty,1)$-topos. 
  These constructions can for example be used to study truncations in an elementary $(\infty,1)$-topos \cite{rasekh2018truncations}.
 
\bibliographystyle{alpha}
\bibliography{main}
\end{document}